\documentclass[11pt,a4paper]{amsart}
\usepackage[T1]{fontenc}
\usepackage{lmodern}
\usepackage{amsmath,amssymb,amsthm,mathtools}
\usepackage[margin=30mm]{geometry}
\usepackage{microtype}
\usepackage[hidelinks]{hyperref}
\hypersetup{pdftitle={Analyticity and tautness of smooth Dupin submanifolds},
 pdfsubject={Research article},
 pdfkeywords={Dupin submanifold, real analyticity, tautness, Morse-Bott, finiteness, Nash isotopy}}
\newcommand{\R}{\mathbb R}
\newcommand{\Z}{\mathbb Z}
\newcommand{\G}{\mathcal G}
\newcommand{\Pcal}{\mathcal P}
\newcommand{\Hess}{\operatorname{Hess}}
\newcommand{\grad}{\operatorname{grad}}
\newcommand{\Crit}{\operatorname{Crit}}
\newcommand{\tr}{\operatorname{tr}}
\newcommand{\rank}{\operatorname{rank}}
\newcommand{\Res}{\operatorname{Res}}
\newcommand{\im}{\operatorname{im}}

\newcommand{\dist}{\operatorname{dist}}
\newtheorem{maintheorem}{Theorem}

\newtheorem{theorem}{Theorem}[section]
\newtheorem{proposition}[theorem]{Proposition}
\newtheorem{lemma}[theorem]{Lemma}
\newtheorem{corollary}[theorem]{Corollary}
\theoremstyle{remark}
\newtheorem{remark}[theorem]{Remark}
\numberwithin{equation}{section}
\title[Analyticity and tautness of smooth Dupin submanifolds]
{Analyticity and tautness of smooth Dupin submanifolds}
\date{}
\subjclass[2020]{Primary 53C40; Secondary 53B25, 53C42}
\keywords{Dupin submanifold, real analyticity, Nash manifold, taut embedding, Morse--Bott function, finiteness, Nash isotopy}

\author{Stephan Wiesendorf}
\begin{document}
\begin{abstract}
We prove that a smooth immersed Euclidean submanifold is locally
Nash if it satisfies the Dupin condition on the regular part of
its unit normal bundle. This condition is equivalent to the
Morse--Bott property of all squared distance functions and, for
compact connected embedded submanifolds, to tautness over the
field $\mathbb{Z}_2$. No local finiteness assumption on the
regular locus, or regularity assumption on its complement, is
required. We also prove that, in each fixed Euclidean space,
every compact connected smooth taut submanifold can be joined
to one of finitely many representatives by a Nash isotopy through
taut embedded submanifolds. 
In each fixed intrinsic dimension, we also obtain only
finitely many diffeomorphism types admitting taut embeddings
in Euclidean spaces.
We first treat hypersurfaces by combining
a dimension-dependent degree bound on proper Dupin pieces with
a continuation argument using finite jets and resultants. An
analytic Hessian criterion then gives the Morse--Bott property,
and small tubes extend the conclusions to arbitrary codimension.
The degree bound also provides uniform semialgebraic complexity.
Together with the semialgebraicity of tautness in parameter
families and Nash triviality, this yields the finiteness and
deformation results. Classical classification theorems give
further topological restrictions.
\end{abstract}
\maketitle

\section{Introduction and main results}

Focal geometry provides a local description of tautness. Let
$M$ be a submanifold of a complete Riemannian manifold $N$, and
write $\eta=\exp^\perp:\nu M\to N$ for its normal
exponential map. The focal multiplicity at a normal vector $v$
is $\dim\ker d\eta_v$. On the regular focal locus in the
sense of Hebda, the focal vectors locally form a smooth
hypersurface on which this multiplicity is constant. If the
multiplicity is at least two, the kernel is automatically tangent
to this hypersurface and integrates to leaves contained in the
fibers of $\eta$ \cite{Hebda}. See also
\cite[Theorem~4.16 and Remark~4.17]{BP} for a proof in the more
general Finsler setting. Thus tangency on the regular focal locus
is an additional condition only in multiplicity one.

Once tangency holds on the regular focal locus, the remaining
issue is to extend this integrability across singular focal
vectors. For a closed embedded submanifold of a
complete Riemannian manifold, the author proved that integrability
of the full kernels of the normal exponential map, with local
fibers smooth and tangent to these kernels, is equivalent to
the Morse--Bott property of all endpoint energy functionals
and to $\Z_2$-tautness \cite[Theorem~2.9]{Wiesendorf}.
In Euclidean space, we show that the Dupin condition on the
regular part of the unit normal bundle suffices for this
extension. It also forces local Nash regularity, without any
compactness or embeddedness assumption. For compact connected
embedded submanifolds, it therefore characterizes tautness.

The Dupin condition connects principal curvature geometry with
this focal description. A curvature surface with constant nonzero
principal curvature lies in a critical set of a squared distance
function and lifts to an integral manifold of the kernel of the
normal exponential map. Pinkall proved that taut Euclidean submanifolds
are Dupin \cite{Pinkall}. Miyaoka obtained this implication
independently for hypersurfaces \cite{Miyaoka}. Compact embedded
proper Dupin hypersurfaces are taut, as Thorbergsson showed in \cite{Thorbergsson}. Here \emph{proper} means that the
number of distinct principal curvatures is constant. This does not
include all taut hypersurfaces, and the converse implication for
general Dupin hypersurfaces requires control at points where principal
curvatures collide. Cecil's survey \cite[Sections 3--4]{Cecil} discusses this question and the distinction between curvature surfaces on the regular locus and those tangent to full principal spaces at singular points. The survey also provides historical background and an extensive bibliography on Dupin geometry and tautness.

Cecil--Chi--Jensen proved local analytic algebraicity for smooth
proper Dupin hypersurfaces and established analyticity for general
Dupin hypersurfaces under an additional local finiteness hypothesis
\cite{CCJ}.
Chi's algebraicity theorem for compact taut submanifolds \cite{Chi}
starts from tautness and therefore does not supply this missing
regularity implication. Our hypersurface results remove the local
finiteness hypothesis, establish tautness in the compact embedded
case, and give finiteness up to deformation through Dupin
hypersurfaces. Small tubes extend the regularity and
Morse--Bott conclusions to arbitrary codimension. The same
construction, together with semialgebraic parameter families,
also gives finiteness through taut embeddings in each fixed
Euclidean space.

Let $f:M^m\to\R^n$, $1\le m<n$, be a smooth immersion of a
manifold without boundary, and let $\nu^1M$ be its unit normal
bundle. At $(p,\xi)\in \nu^1M$, write $A_\xi$ for the shape
operator and $X^h$ for the horizontal lift of $X\in T_pM$ with
respect to the normal connection. Let $\G_\nu\subset \nu^1M$
be the open dense set where the number of distinct eigenvalues
of $A_\xi$ is locally constant. On this set we require
\begin{equation*}\tag{$D_\nu$}\label{eq:Dnu}
 d\lambda(X^h)=0\ \
 \text{ whenever }\ A_\xi X=\lambda X.
\end{equation*}
Here $\lambda$ is a local eigenvalue function on $\G_\nu$.
For hypersurfaces this is the usual regular Dupin condition.
In arbitrary codimension it is satisfied by Dupin submanifolds
in the unit normal bundle sense of Pinkall
\cite{Pinkall,CCJpre}, as recalled in
\cite[Remark~3.3]{Cecil}. We impose no condition at points outside
$\G_\nu$, and the normal bundle need not be flat. The Codazzi
equation applied to small tubes shows that \eqref{eq:Dnu} is
automatic for eigenvalues of multiplicity at least two on
$\G_\nu$, as follows from the tube formulas in
Section~\ref{sec:tubes}. Thus the additional condition again
concerns only the simple eigenvalues on the regular locus.

The relation with focal geometry can be seen directly from the
normal endpoint map. The principal curvatures in the unit normal
direction $\xi$ are the eigenvalues of $A_\xi$. In Euclidean space,
the normal exponential map is
\[
 \eta:\nu M\longrightarrow\R^n,\qquad
 \eta(p,v)=f(p)+v.
\]
In the horizontal--vertical splitting
$T_{(p,v)}\nu M\simeq T_pM\oplus\nu_pM$ induced by the normal
connection, the Weingarten formula gives
\[
 d\eta_{(p,v)}(X,V)=df_p((I-A_v)X)+V.
\]
Since $A_{t\xi}=tA_\xi$, it follows that
\[
 \ker d\eta_{(p,t\xi)}
 =\ker(I-tA_\xi)\oplus\{0\}.
\]
Thus $t\xi$ is a focal vector precisely when
$t=1/\lambda$ for a nonzero principal curvature $\lambda$
of $A_\xi$. Its image $f(p)+t\xi$ is the corresponding focal
point, and the focal multiplicity equals the multiplicity
of $\lambda$. For a nonzero local eigenvalue function
$\lambda$ on $\G_\nu$, condition \eqref{eq:Dnu} says precisely
that $\ker d\eta$ is tangent to the local focal hypersurface
parametrized by $(p,\xi)\mapsto(p,\xi/\lambda(p,\xi))$.
\par\vspace{\baselineskip}

Our first main result shows that condition \eqref{eq:Dnu},
imposed only on the regular locus, forces local Nash regularity
and the Morse--Bott property of all squared distance functions.
For compact connected embedded submanifolds, it also yields
$\Z_2$-tautness.

\begin{maintheorem}\label{thm:general}
Let $f:M^m\to\R^n$, $1\le m<n$, be a $C^\infty$ immersion
satisfying \eqref{eq:Dnu}.
\begin{enumerate}
\item Every local image branch is real-analytic and locally
Nash. There is a compatible analytic structure on $M$ for which
$f$ is analytic, and every squared distance function
$L_q=\tfrac12|f-q|^2$ is Morse--Bott.
\item If $M$ is compact and connected and $f$ is an embedding,
then $f$ is $\Z_2$-taut.
\end{enumerate}
\end{maintheorem}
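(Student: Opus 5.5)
The plan follows the strategy announced in the abstract: hypersurfaces first, then tubes for arbitrary codimension.

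\emph{Step 1: reduction to hypersurfaces.} Work locally on a small open set $U\subset M$ where $f$ is an embedding. For small $\varepsilon>0$ let $T_\varepsilon=\{f(p)+\varepsilon\xi:(p,\xi)\in\nu^1U\}$ be the tube. It is an embedded hypersurface, diffeomorphic to $\nu^1U$ via the endpoint map. Its principal curvatures at $f(p)+\varepsilon\xi$ are $\lambda/(1-\varepsilon\lambda)$ for the eigenvalues $\lambda$ of $A_\xi$, together with $-1/\varepsilon$ with multiplicity $n-m-1$ in the fibre directions. The principal spaces are horizontal lifts of eigenspaces plus the vertical space. Since $\lambda\mapsto\lambda/(1-\varepsilon\lambda)$ is injective and avoids $-1/\varepsilon$, the regular set of $T_\varepsilon$ corresponds exactly to $\G_\nu$. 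Condition \eqref{eq:Dnu} then translates into the hypersurface Dupin condition on the regular part of $T_\varepsilon$. The constant curvature $-1/\varepsilon$ is automatically Dupin. So it suffices to prove that a smooth hypersurface satisfying the Dupin condition on its regular set is locally Nash, and to obtain the Morse--Bott property for its distance functions. From the tube one recovers $f(U)$ as the focal set $\{x-\varepsilon N(x)\}$, where $N$ is the inner normal, which is Nash when $T_\varepsilon$ is. Hence local image branches of $f$ are locally Nash. The analytic structure on $M$ is pulled back via these branches; compatibility holds because the Nash germs are unique continuations.

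\emph{Step 2: the hypersurface case.} On each connected component of the proper locus, which is open and dense, I would use the degree bound: a proper Dupin hypersurface piece with $g$ distinct principal curvatures lies in a real algebraic hypersurface of degree bounded by some $D(n)$. The tool here is Lie sphere geometry and the Cecil--Chi--Jensen algebraicity argument, made quantitative. For the continuation, fix a point $p$ in the non-proper set and consider the components of the proper locus accumulating at $p$. Each is contained in the zero set of a polynomial of degree $\le D(n)$. The coefficients of such a polynomial are determined by a finite jet, of order $k(D)$, at any point of that piece.

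The main obstacle is that infinitely many components may accumulate at $p$, with no local finiteness. I would handle it as follows. Consider the space of polynomials of degree $\le D$ vanishing to order $k$ along the hypersurface at a point. This is a linear condition given by the $k$-jet, and it varies smoothly in the point. By Noetherian stabilization and a resultant argument, the subspace of polynomials vanishing on the $k$-jet at $q$ is upper semicontinuous in $q$. On the proper pieces it is nonzero. Taking limits of normalized polynomials in the compact unit sphere of coefficients, we get a nonzero polynomial $P_p$ of degree $\le D$ vanishing to order $k$ at $p$. Resultants of $P_p$ with the polynomials $P_i$ of nearby pieces then force the $P_i$ to share a common factor with $P_p$, since high-order contact between degree-$\le D$ hypersurfaces beyond the B\'ezout bound forces common components. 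This shows the hypersurface lies near $p$ in the zero set of a single polynomial, with a nonsingular Nash branch. Hence it is Nash, and in particular analytic.

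\emph{Step 3: Morse--Bott property and tautness.} With analyticity available, let $q$ be a point and $C$ a critical component of $L_q$. At critical points, $\Hess L_q=I-A_v$ with $v=q-f(p)$. The kernel of $I-A_v$ is the focal eigenspace. The Dupin condition on the dense regular locus shows that its integral leaves are curvature surfaces lying in spheres centered at focal points. By analyticity, the rank of $I-tA_\xi$ along the critical set is governed by analytic functions. An analytic Hessian criterion then applies: the critical set is an analytic set whose tangent cone contains the kernel of the Hessian. Combining this with the fact that limits of curvature leaves are analytic subvarieties of full dimension, we obtain that $C$ is a submanifold with $T C=\ker\Hess$. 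This is the Morse--Bott property; for submanifolds it is transported from the tubes, since $L_q$ on $M$ and $L_q$ on $T_\varepsilon$ have corresponding critical structure. Finally, for compact connected embedded $f$, the Morse--Bott property of all $L_q$ is the hypothesis of \cite[Theorem~2.9]{Wiesendorf}, which yields $\Z_2$-tautness. I expect the accumulation and continuation in Step 2 to be the genuine difficulty.
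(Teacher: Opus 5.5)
Your Step~1 (tubes) and the final appeal to the author's fiber-integrability characterization of tautness match the paper. Your degree bound also has the same source: the paper makes Cecil--Chi--Jensen quantitative through finitely many universal semialgebraic jet-transport maps and Lemma~\ref{lem:family-degree}. The two central steps, however, have genuine gaps.

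\emph{Continuation (Step~2).} The limit polynomial $P_p$ only lies in $\ker\mathcal T_p$, so it has high-order contact with the graph at the single point $p$. A resultant argument needs a common point of high-order contact. It therefore says nothing about regular components $C_i$ with $p\notin\overline{C_i}$, and such components may accumulate at $p$, since no local finiteness is assumed. When $p\in\overline{C_i}$ you only get $P_i\mid P_p$, and this does not yield one polynomial vanishing on a dense subset of a neighborhood of $p$. Upper semicontinuity of $x\mapsto\ker\mathcal T_x$ only says that limits of nearby kernels lie in $\ker\mathcal T_p$, not that nearby kernels equal it.

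The missing idea is the maximal-degree/constant-rank argument of Lemma~\ref{lem:gluing}:
\begin{enumerate}
\item Take $P_*$ of maximal degree $d_*$ among all regular components. By Lemma~\ref{lem:contact}, $\rank\mathcal T_x\le r_*=N_D-N_{D-d_*}$ on the dense regular set, hence everywhere.
\item At a boundary point $x_0$ of the open set where $P_*(x,h(x))$ vanishes, $P_*\in\ker\mathcal T_{x_0}$. So the rank attains its maximum $r_*$ and is locally constant.
\item The kernels then form a smooth subbundle on a neighborhood $V$. This subbundle is locally constant on the dense set $G\cap V$, hence constant, which forces $P_*(x,h(x))=0$ on $V$.
\end{enumerate}
An open--closed argument concludes. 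Moreover, $p$ can be a singular point of $Z(P_*)$, so there is no ``nonsingular Nash branch'' in general. The paper gets Nash regularity there by showing the graph is semialgebraic (root branches over the finitely many components of $B\setminus E$, then closure) and using that smooth semialgebraic functions are Nash.

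\emph{Morse--Bott (Step~3).} For analytic $h$, the tangent cone of $\Crit h$ at a critical point is automatically \emph{contained in} $\ker\Hess h$. The reverse inclusion you assert is precisely the Morse--Bott property at eigenvalue collisions. The claim that ``limits of curvature leaves are analytic subvarieties of full dimension'' is unsupported: leaves of colliding simple curvatures are one-dimensional, and nothing forces their limits to fill the larger kernel.

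The paper supplies a quantitative criterion, Lemma~\ref{lem:MB}. If $|(\nabla_e\Hess h)(e,e)|\le C|dh|_g$ for all unit simple eigenvectors near a critical point of an analytic $h$, then $h$ is Morse--Bott. The proof uses the analytic splitting $h=h(x_0)+\rho(u)+\tfrac12v^{\mathsf T}Bv$. If $\rho\ne0$ has leading term $P_d$, take $u_0$ a maximum point of $P_d$ on the unit sphere and $z_s=(su_0,0)$. At $z_s$ there is a simple Hessian eigenvector $e_s$ with $(\nabla_{e_s}\Hess h)(e_s,e_s)\sim s^{d-3}$, while $|dh|=O(s^{d-1})$, contradicting the bound.

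The Dupin hypothesis enters through the identity $(\nabla_e\Hess L_q)(e,e)=-\lambda^2dL_q(e)-t\,e(\lambda)$, where $t=\langle q-f,\nu\rangle$. Here $e(\lambda)=0$ on the whole open set where $\lambda$ is simple, by \eqref{eq:D} on $\G$ and continuity. Your Step~3 never brings \eqref{eq:D} to bear at singular points in a checkable way, so as written it does not establish the Morse--Bott property there.
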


In fact, the Morse--Bott conclusion of
Theorem~\ref{thm:general} also admits a converse,
yielding the following characterization of the
regular Dupin condition.

\begin{corollary}\label{cor:characterization}
Let $f:M^m\to\R^n$, $1\le m<n$, be a smooth immersion of a
manifold without boundary. Then \eqref{eq:Dnu} holds if and only
if every squared distance function $L_q=\tfrac12|f-q|^2$ is
Morse--Bott. If $M$ is compact and connected and $f$ is an
embedding, these conditions are equivalent to $\Z_2$-tautness.
\end{corollary}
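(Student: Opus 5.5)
One implication is supplied by Theorem~\ref{thm:general}(1): if \eqref{eq:Dnu} holds, then every squared distance function $L_q$ is Morse--Bott. For compact connected embeddings, Theorem~\ref{thm:general}(2) gives $\Z_2$-tautness. The classical results supply the rest. A $\Z_2$-taut compact embedding has all $L_q$ perfect Morse functions for generic $q$, and by Pinkall \cite{Pinkall} it is Dupin in the unit normal bundle sense, which implies \eqref{eq:Dnu}. Alternatively, tautness gives the Morse--Bott property through the focal characterization \cite[Theorem~2.9]{Wiesendorf}. So the only new step is the converse: the Morse--Bott property of all $L_q$ implies \eqref{eq:Dnu}, with no compactness assumed. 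This step is local.

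Fix $(p,\xi)\in\G_\nu$ and a local smooth eigenvalue function $\lambda$ on a neighbourhood $U\subset\G_\nu$ with constant multiplicity. If $\lambda\equiv0$ near $(p,\xi)$, the condition $d\lambda(X^h)=0$ is trivial. Since the set where $\lambda\ne0$ together with the interior of $\{\lambda=0\}$ is dense, continuity of $d\lambda$ lets us assume $\lambda(p,\xi)\neq0$. Take $q=f(p)+\xi/\lambda(p,\xi)$. Then
\[
\nabla L_q=(f-q)^{T},\qquad \Hess L_q|_p=I-\tfrac1\lambda A_\xi ,
\]
so $p$ is critical and the kernel of the Hessian is the eigenspace $E_\lambda(p,\xi)$. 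The Morse--Bott hypothesis says the critical set near $p$ is a submanifold $C$ with $T_pC=E_\lambda(p,\xi)$.

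At a critical point $p'\in C$ the vector $q-f(p')$ is normal, so we can write $q=f(p')+t(p')\xi(p')$ with $t(p')=|q-f(p')|$ and $\xi(p')$ a unit normal. The Hessian there is $I-tA_{\xi(p')}$, whose kernel must equal $T_{p'}C$. Near $p$ this kernel has dimension at most $\dim E_\lambda$, and by constancy of multiplicity on $U$ it equals $E_\lambda(p',\xi(p'))$. Hence $\lambda(p',\xi(p'))=1/t(p')$ along $C$. Differentiating the relation $|q-f|=t$ gives $dt(X)=0$ for $X\in T_{p'}C$, because $q-f(p')$ is normal. So $1/\lambda$ is constant along the lifted curve $p'\mapsto(p',\xi(p'))$ in $\nu^1M$.

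It remains to check that this lift is horizontal. Along $C$ the vector $f+t\xi=q$ is constant, so differentiating in the direction $X\in T_{p'}C$ and using the Weingarten formula gives
\[
df\bigl((I-tA_\xi)X\bigr)+t\nabla^\perp_X\xi=0 .
\]
Since $X\in\ker(I-tA_\xi)$, this forces $\nabla^\perp_X\xi=0$. Therefore the derivative of the lift in direction $X$ is exactly $X^h$, and $d\lambda(X^h)=0$ for every $X\in E_\lambda(p,\xi)$, which is \eqref{eq:Dnu}.

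The main obstacle is identifying the Hessian kernel with $E_\lambda(p',\xi(p'))$ along $C$, i.e.\ ensuring $(p',\xi(p'))$ stays in $U$ and that no other eigenvalue of $A_{\xi(p')}$ equals $1/t$. This follows by continuity on the open set $U$, where eigenvalues stay separated. The rest is the short computation above.
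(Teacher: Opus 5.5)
Your converse argument is correct up to one sign slip, fixed below, but it takes a different route from the paper's.

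\textbf{How the two routes differ.} The paper's proof works only at the point $p$. Because $\ker\Hess L_q=TC$ along the critical manifold $C$, differentiating $\Hess L_q(E,E)=0$ along a curve in $C$ gives $(\nabla_e\Hess L_q)_p(e,e)=0$. The paper then differentiates $H=I-A_v$, with $v=(q-f)^\perp$, $\nabla^\perp_e v=0$ at $p$, and $\xi$ extended normally parallel. This identifies the third derivative with $-t\,d\lambda(e^h)$, using the perturbation formula $\langle(\nabla_{e^h}A)e,e\rangle=d\lambda(e^h)$ on $\G_\nu$.

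You work along all of $C$ instead. You lift $C$ to $\nu^1M$ by $p'\mapsto(p',\xi(p'))$. From $f+t\xi\equiv q$ you get that the lift is horizontal. From $q-f\perp df$ you get that $t$ is constant along it, and hence so is $\lambda=1/t$. This is exactly the converse of the paper's remark after Proposition~\ref{prop:distances}: Morse--Bott critical manifolds lift to integral manifolds of $\ker d\eta$ on which the principal curvature is constant.

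What your route buys is that it is first-order and geometric, with no third-derivative computation. What it costs is the bookkeeping you flagged: the eigenvalue $1/t(p')$ at nearby points of $C$ must be the branch $\lambda$ itself. Both proofs use $(p,\xi)\in\G_\nu$, yours through separation of eigenvalue branches along $C$, the paper's through the perturbation formula. For the compact case, citing Pinkall (taut implies Dupin implies \eqref{eq:Dnu}) works as well as the paper's use of Ozawa (taut implies Morse--Bott).

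\textbf{The sign slip.} You set $t(p')=|q-f(p')|>0$ and $\xi(p')=(q-f(p'))/t(p')$. If $\lambda(p,\xi)<0$, this gives $\xi(p)=-\xi$. The lift then passes through $(p,-\xi)$, which may lie outside the domain $U$ of $\lambda$, and the identity $\lambda(p',\xi(p'))=1/t(p')$ fails already at $p'=p$.

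There are two easy repairs:
\begin{itemize}
\item Reduce first to $\lambda(p,\xi)>0$. Use $A_{-\xi}=-A_\xi$: the antipodal map preserves $\G_\nu$ and horizontal lifts, and it turns $\lambda$ into the branch $-\lambda(\cdot,-\cdot)$, changing $d\lambda(X^h)$ only by a sign.
\item Or use the signed quantity $t(p')=\langle q-f(p'),\xi(p')\rangle$, where $\xi(p')$ is the continuous unit normal along $C$ with $\xi(p)=\xi$.
\end{itemize}
With either repair the rest of your computation goes through unchanged, since $t^2=|q-f|^2$ still forces $dt|_{TC}=0$.
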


The proof of Theorem~\ref{thm:general} is given in
Section~\ref{sec:tautness}, where we also prove
Corollary~\ref{cor:characterization}. The proof first treats
hypersurfaces, using a degree bound and a continuation argument,
and then passes to arbitrary codimension by small tubes.

Let $f:M^m\to\R^{m+1}$, $m\ge1$, be a $C^\infty$ immersion.
All manifolds are without boundary. Write $g$ for the induced
metric and $A$ for the shape operator with respect to a local
unit normal $\nu$, with the convention $D_X\nu=-df(AX)$.
Let $\G\subset M$ be the set where the number of distinct eigenvalues of $A$ is locally constant. The set $\G$ is open and dense. Indeed, that number is
lower semicontinuous and bounded by $m$. A point maximizing it in any
nonempty open set has a neighborhood on which it is constant.
On $\G$, separated spectral projections, the distinct principal
curvatures, and their multiplicities are smooth.

We assume the following \emph{regular Dupin condition}:
\begin{equation}\tag{D}\label{eq:D}
 d\lambda(X)=0\qquad
 \text{for }X\in E_\lambda:=\ker(A-\lambda I)\ \text{ on }\G.
\end{equation}
This condition is independent of the choice of local unit normal. We impose no condition on curvature surfaces at points outside \(\mathcal G\). On \(\mathcal G\), the principal distributions are integrable by the Codazzi equation. Codazzi also implies that a principal curvature of multiplicity at least two is constant along its curvature leaves. Thus, Condition (D) imposes an additional restriction only on the simple principal curvatures. Under this hypothesis, every connected component of \(\mathcal G\) is a proper Dupin hypersurface.

The hypersurface regularity result is proved in
Theorem~\ref{thm:A}. In addition, it gives a bound depending
only on $m$ for the degree of an irreducible polynomial vanishing
on the image of each connected component of $M$.

Analyticity here concerns the image branches and the compatible
analytic structure, rather than an arbitrary smooth
parametrization. Recall that a Nash function is a smooth semialgebraic function on an open semialgebraic
set and that such functions are known to be real-analytic \cite[Chapter~8]{BCR}.

Theorem~\ref{thm:A} supplies the regularity needed to study
distance functions at eigenvalue collisions. Using analyticity and the Dupin condition, we show that every squared distance function is Morse--Bott. This property carries over to the corresponding endpoint energy functionals, allowing us to apply the author's characterization of tautness to deduce Proposition~\ref{thm:B}.

A smooth immersion $f: N \to \R^n$ of a compact manifold is
$\mathbb F$-\emph{taut} if every Morse squared distance function
\[
 L_q(x)=\tfrac12|f(x)-q|^2
\]
is perfect over the field $\mathbb F$, i.e., for every $k$
it has exactly $b_k(N;\mathbb{F})$ critical points of index $k$,
where $b_k(N;\mathbb{F})$ denotes the $k$-th
$\mathbb{F}$-Betti number. 

Ozawa proved that distance
functions on compact taut submanifolds are Morse--Bott
\cite{Ozawa}, and Terng--Thorbergsson established the corresponding
statement for endpoint energy functionals \cite[Theorem~2.8]{TT}.
The converse is the characterization recalled above. Moreover,
tautness over any field implies $\Z_2$-tautness
\cite[Theorem~2.11]{Wiesendorf}. In Euclidean space, the Morse--Bott condition can equivalently be stated in terms of squared distance functions and the
endpoint energy splits as the squared distance function plus a
positive quadratic form. We use this splitting in the proof of
Proposition~\ref{thm:B}. For a detailed discussion of tautness, we refer the reader to \cite{Wiesendorf}, and for a historical context and an extensive list of references, to \cite{Cecil} and \cite{TT}.

Our results answer two questions raised in the literature. Cecil--Chi--Jensen asked whether every connected Dupin hypersurface has the local finiteness property \cite[Question 36]{CCJpre}. Theorem~\ref{thm:A} implies that it does. Indeed, the shape operator has Nash coefficients in local Nash coordinates. In each such chart, the loci of fixed principal multiplicities and the regular set are semialgebraic. The regular set therefore has only finitely many connected components in sufficiently small semialgebraic neighborhoods.
Further, Cecil asked whether compact embedded Dupin submanifolds are necessarily taut, without assuming that the number of distinct principal curvatures is constant \cite[p.~136]{Cecil1997}. Theorem~\ref{thm:general} answers this question in arbitrary codimension. We require the Dupin condition only on the regular set. 
For hypersurfaces, the semi-Dupin condition additionally
requires curvature surfaces tangent to every principal
space, including at singular points.
Proposition~\ref{prop:distances} establishes the existence
of these surfaces under \eqref{eq:D}.
Together with Proposition~\ref{thm:B}, this proves the
remaining implication in the conjectured equivalence of
tautness and the semi-Dupin condition for compact connected
embedded hypersurfaces recalled in \cite[Remark~4.3]{Cecil}.
In Section~\ref{sec:tautness}, we explain their relation to the fibers of the normal exponential map, prove Proposition~\ref{thm:B}, and then pass to arbitrary codimension.

The second main result is a finiteness theorem that retains
tautness throughout the deformation. In each fixed Euclidean
space, every compact connected smooth taut submanifold can be
joined to one of finitely many representatives by a Nash isotopy
through taut embedded submanifolds
(Theorem~\ref{fin:thm:tautdeformations}). Kuiper's bound on
substantial ambient dimension then gives finiteness of abstract
diffeomorphism types in each fixed intrinsic dimension
(Corollary~\ref{fin:cor:intrinsic}). These statements apply in
arbitrary codimension.

The degree bound provides the link between the local regularity
argument and these global finiteness results. A compact embedded
hypersurface contained in an irreducible algebraic hypersurface
is the closure of a union of regular algebraic components.
Hardt's theorem \cite{Hardt} then gives uniform semialgebraic
complexity, and Coste--Shiota's Nash triviality and finiteness
theorems \cite{CS} apply. We show that tautness defines a
semialgebraic subset of the resulting parameter spaces. By
restricting to this subset, we retain tautness, and hence the
Dupin condition, throughout the deformation. The general
statement is as follows, with tautness understood over $\Z_2$.

\begin{maintheorem}
\label{fin:thm:tautdeformations}
For each $n\ge2$, there is a finite list of compact connected
taut Nash submanifolds of $\R^n$ such that every compact
connected smooth taut submanifold of $\R^n$ is Nash isotopic
to a member of the list through taut embedded submanifolds.
The isotopy admits a smooth ambient extension, allowing a
smooth reparametrization of time.

Consequently only finitely many smooth ambient isotopy types and
diffeomorphism types occur, and the total $\Z_2$-Betti number is
bounded by a constant depending only on $n$.
\end{maintheorem}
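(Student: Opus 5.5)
The plan is to reduce everything to hypersurfaces through the tube construction, and then apply semialgebraic triviality to a parameter space of bounded complexity.

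\emph{Step 1: tubes.} Let $N\subset\R^n$ be a compact connected smooth taut submanifold of codimension $\ge2$. By Pinkall's theorem it is Dupin, so it satisfies \eqref{eq:Dnu}. Theorem~\ref{thm:general} then makes $N$ locally Nash, hence a compact Nash submanifold. A small tube $T_\varepsilon N$ is a compact embedded hypersurface, and the tube formulas of Section~\ref{sec:tubes} show it satisfies \eqref{eq:D}. By Theorem~\ref{thm:A}, $T_\varepsilon N$ lies in an irreducible algebraic hypersurface whose degree is bounded by a constant $d(n)$ depending only on $n-1$. Codimension one is handled directly, without tubes.

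\emph{Step 2: recovering $N$ with bounded complexity.} I would recover $N$ semialgebraically from $T_\varepsilon N$, with complexity bounded in terms of $n$. One option is to take $N$ as the focal set of $T_\varepsilon N$ at distance $\varepsilon$ from it. This set is the image of the set of points where a principal curvature equals $-1/\varepsilon$, pushed along the normal. It is therefore defined by first-order polynomial data in the coefficients of the defining polynomial $P$ of degree $\le d(n)$.

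\emph{Step 3: the parameter family.} Consider the semialgebraic family of data $(P,\varepsilon)$ with $\deg P\le d(n)$. For each such datum, form the compact set obtained by taking a union of regular components of $\{P=0\}$ and then applying the focal construction. Hardt's theorem gives a finite semialgebraic partition of the parameter space over which this family is semialgebraically trivial. Refining, the locus $S$ where the fiber is a compact connected smooth submanifold is semialgebraic. I would then show that the taut locus $S_{\mathrm{taut}}\subset S$ is semialgebraic. Tautness means that for generic $q$ the number of critical points of $L_q$ equals $\sum_k b_k(N;\Z_2)$. By Hardt triviality, the Betti numbers are constant on each stratum. The count of critical points of $L_q$ is a semialgebraic function of $(s,q)$. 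Hence ``for all $q$ off a measure-zero semialgebraic set the count equals $\beta$'' is first-order expressible, and Tarski--Seidenberg applies. One could alternatively use Corollary~\ref{cor:characterization}, since the Morse--Bott condition is first-order in jets, but the counting formulation seems cleaner.

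\emph{Step 4: Nash triviality and conclusion.} Apply Coste--Shiota's Nash triviality theorem to the family over $S_{\mathrm{taut}}$. This produces finitely many Nash-trivial pieces, each with finitely many connected components. Pick one representative in each component. Two members of the same connected component are joined by a semialgebraic path, which can be taken Nash after a reparametrization of time. Nash triviality along this path gives a Nash isotopy, and every intermediate fiber is taut because the path stays in $S_{\mathrm{taut}}$. The smooth ambient extension follows from the isotopy extension theorem applied to the Nash family. The finiteness of smooth ambient isotopy types and diffeomorphism types is then immediate. The Betti bound follows because the total Betti number is constant on each of finitely many components; alternatively, Milnor--Thom gives it directly from the degree bound.

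\emph{Main obstacle.} I expect the hardest step to be Step 3, namely making the semialgebraic family genuinely cover every taut $N$ with uniform complexity. The degree bound controls only $\{P=0\}$, and $T_\varepsilon N$ may be a proper union of regular components of this zero set, whose singular set has to be handled. Expressing the choice of components and the focal recovery of $N$ uniformly in the parameters will be delicate. I would first attempt it via Hardt triviality of the pair $(\{P=0\},\mathrm{Sing})$ over the coefficient space, which makes the set of components combinatorially constant on each stratum.
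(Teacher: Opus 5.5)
Your architecture matches the paper's. You reduce to the connected tube, apply the degree bound of Theorem~\ref{thm:A}, and use Hardt triviality over coefficient space to make the choice of regular components finite. You then recover $N$ by a first-order formula, show the taut locus is semialgebraic by counting critical points of Morse distance functions, and conclude with Coste--Shiota.

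\textbf{The missing step.} The step you flag as the main obstacle is left open, and Hardt triviality alone does not close it. You need to know that $T_\varepsilon N$ is the closure of a union of \emph{whole} connected components of $Z_{\mathrm{reg}}(P)$. Without this your family need not contain $N$ as a fiber at all, for instance if $\grad P$ vanished on an open piece of the tube. The paper supplies this in Lemma~\ref{fin:lem:selection}, and the key is irreducibility of $P$:
\begin{itemize}
\item If $\grad P$ vanished on an open subset of a compact hypersurface $H\subset Z(P)$, each $\partial_iP$ would vanish there.
\item The resultant argument of Lemma~\ref{lem:contact}, in graph coordinates, would then force $P\mid\partial_iP$. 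This is impossible, since some $\partial_iP$ is nonzero of smaller degree.
\item Hence $H\cap Z_{\mathrm{reg}}(P)$ is dense in $H$. It is open in $Z_{\mathrm{reg}}(P)$ because both are $(n-1)$-manifolds, and closed there because $H$ is compact. So it is a union of components whose closure is $H$.
\end{itemize}
Only then does the fiberwise-closure construction of Lemma~\ref{fin:lem:complexity} give the uniform complexity you want.

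\textbf{The Nash path step fails as written.} A semialgebraic path in a connected Nash piece of the parameter space is in general only piecewise Nash. No reparametrization of time removes its corners, so it does not give an isotopy that is jointly Nash in $(x,t)$. There are two fixes:
\begin{itemize}
\item Refine the taut parameter set into finitely many Nash cells, each Nash diffeomorphic to an open box, and use straight segments with one representative per cell, as the paper does.
\item Alternatively, use that connected Nash manifolds are Nash path connected: approximate by a polynomial path with the same endpoints and compose with a Nash tubular retraction.
\end{itemize}

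\textbf{Two smaller remarks.}
\begin{itemize}
\item Your focal recovery is a valid alternative to the paper's description $N=\{x\in\Omega_\varepsilon\mid\forall y\in H_\varepsilon:|x-y|^2\ge\varepsilon^2\}$, and it avoids singling out the bounded component $\Omega_\varepsilon$. However, the normal $\pm\grad P/|\grad P|$ cannot be oriented uniformly, and allowing both orientations also produces the points $p+2\varepsilon\xi$ where $1/(2\varepsilon)$ is an eigenvalue of $A_\xi$. So choose $2\varepsilon$ below the focal radius of $N$. Also evaluate curvatures only at points of $Z_{\mathrm{reg}}(P)$ before taking the closure.
\item ``For $q$ off a measure-zero set'' is not first-order as stated. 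Phrase it as ``the set of $q$ with $\#\Crit L_q\ne\beta$ has empty interior.'' By stability of nondegenerate critical points, this is equivalent to the paper's condition that every Morse $L_q$ has exactly $\beta$ critical points.
\end{itemize}
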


By Theorem~\ref{thm:general}, this applies in particular to
compact connected embedded submanifolds satisfying \eqref{eq:Dnu}.
The isotopies concern the embedded images with their induced
Nash structures. They are jointly Nash families of embeddings
$\iota_t:M\to\R^n$ with $\iota_0$ the inclusion. All intermediate
images are taut, hence Dupin. Principal multiplicities need not
remain fixed, and Theorem~\ref{fin:thm:tautdeformations} does not
assert finiteness up to Lie-sphere transformations.

Independently of these finiteness results,
Theorem~\ref{thm:general} and classical classification theorems for
taut embeddings yield rigidity for homology spheres in arbitrary
codimension and determine the possible diffeomorphism types in
dimensions two and three. In dimension four, vanishing of the
first or second $\Z_2$-Betti number gives further classifications.
For hypersurfaces, the homology of a product of two spheres
characterizes Dupin cyclides. This last conclusion does not extend
to arbitrary codimension.

Since \eqref{eq:D} is automatically met for principal curvatures of
multiplicity at least two, we obtain the following corollaries.
In the compact case the immersion is shown to be embedded
before Proposition~\ref{thm:B} is applied.

\begin{corollary}\label{cor:analytic}
Let $f:M^m\to\R^{m+1}$, $m\ge2$, be a $C^\infty$ immersion.
If every principal curvature has multiplicity at least two on
$\G$, all conclusions of Theorem~\ref{thm:A} hold.
\end{corollary}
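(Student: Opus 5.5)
The plan is to check that the hypothesis of Corollary~\ref{cor:analytic} implies the regular Dupin condition \eqref{eq:D}. Then Theorem~\ref{thm:A} applies verbatim. No new analytic input is needed; everything reduces to the classical Codazzi computation on the open dense set $\G$.

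First, fix a connected open subset $U\subset\G$. On $U$ the distinct principal curvatures $\lambda_1,\dots,\lambda_k$ are smooth, and so are their principal distributions $E_{\lambda_i}$ of constant multiplicities $m_i\ge2$. For $X,Y\in E_{\lambda}$ and any $Z$, differentiate $AY=\lambda Y$ and use the Codazzi equation $(\nabla_XA)Y=(\nabla_YA)X$. Pairing with $Z\in E_\lambda$ gives
\[
 (X\lambda)\,g(Y,Z)=g((\nabla_X A)Y,Z)+g((\lambda I-A)\nabla_XY,Z),
\]
where the last term vanishes because $\lambda I-A$ is self-adjoint and kills $Z$. Symmetry of $g((\nabla_XA)Y,Z)$ in all three arguments, which follows from Codazzi plus self-adjointness, then yields $(X\lambda)g(Y,Z)=(Y\lambda)g(X,Z)$ for all $X,Y,Z\in E_\lambda$. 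Since $\dim E_\lambda\ge2$, choose $Y=Z$ a unit vector orthogonal to $X$. This gives $X\lambda=0$. Hence \eqref{eq:D} holds at every point of $\G$.

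Second, I would note that \eqref{eq:D} is all that Theorem~\ref{thm:A} uses. So the regularity, the Nash conclusions and the degree bound follow directly. For the Morse--Bott property, I would invoke the corresponding part of Theorem~\ref{thm:A}, or equivalently Theorem~\ref{thm:general}(1) with $n=m+1$.

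The one point needing care is the compact case, where tautness is asserted. The text before the corollary says the immersion is shown to be embedded before Proposition~\ref{thm:B} is applied. The tautness conclusion in Theorem~\ref{thm:A} should therefore already contain that step, presumably via analyticity and the degree bound ruling out self-intersections of a compact Dupin immersion, or via the argument recorded there. I would simply cite it. The main obstacle is thus only bookkeeping: making sure that the hypothesis $m\ge2$ is exactly what excludes simple eigenvalues on $\G$, and that no condition off $\G$ is needed.
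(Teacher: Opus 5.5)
Your argument is correct and is exactly the paper's: on $\G$ the Codazzi equation gives $X\lambda=\langle(\nabla_XA)Y,Y\rangle=\langle(\nabla_YA)X,Y\rangle=0$ for a unit $Y\in E_\lambda$ orthogonal to $X$, so \eqref{eq:D} holds and Theorem~\ref{thm:A} applies. (Your sign in front of the $(\lambda I-A)\nabla_XY$ term is flipped, but that term vanishes anyway.)

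The only correction concerns your last two paragraphs, which are unnecessary here. Theorem~\ref{thm:A} asserts only analyticity, local Nash regularity, the compatible analytic structure and the degree bound, and says nothing about Morse--Bott or tautness. Those belong to Proposition~\ref{prop:distances} and Corollary~\ref{cor:taut}. In the latter, embeddedness comes from Proposition~\ref{prop:embedding}, because Morse distance functions have no index-one critical points; it does not come from the degree bound.
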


\begin{corollary}\label{cor:taut}
Let $f:M^m\to\R^{m+1}$, $m\ge2$, be a $C^\infty$ immersion
of a compact connected manifold. If every principal curvature
has multiplicity at least two on $\G$, then $f$ is a
$\Z_2$-taut embedding.
\end{corollary}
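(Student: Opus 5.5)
The plan is to derive Corollary~\ref{cor:taut} from Corollary~\ref{cor:analytic} and Proposition~\ref{thm:B}. The only extra step is embeddedness, which the text says must be established before Proposition~\ref{thm:B} is applied.

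\emph{Dupin condition and regularity.} On $\G$, every principal curvature has multiplicity at least two. Codazzi then gives that each principal curvature is constant along its curvature leaves, so \eqref{eq:D} holds automatically. Corollary~\ref{cor:analytic} therefore supplies all conclusions of Theorem~\ref{thm:A}: local Nash image branches, a compatible analytic structure on $M$, and a degree bound for an irreducible polynomial vanishing on $f(M)$. We also have Proposition~\ref{prop:distances}, the Morse--Bott property of every $L_q$.

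\emph{Embeddedness.} This is the main obstacle. Since $M$ is compact, $f$ is proper, so it suffices to show that $f$ is injective. The approach I would try first is a focal-point/tautness argument on the two-sheeted orientation cover. The normal bundle of a hypersurface is a line bundle, and the multiplicity hypothesis forces $m\ge2$. The shape operator of the Dupin hypersurface therefore has well-defined principal curvatures up to sign. Pick $q$ generic so that $L_q$ is Morse. Its critical points then come in families governed by the Morse--Bott structure.

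The key input is the Morse--Bott property combined with multiplicity at least two everywhere on $\G$, hence everywhere by continuity of multiplicities, since they are upper semicontinuous and bounded below by two on the dense set $\G$. This gives that every critical manifold of a Morse--Bott $L_q$ at a focal point has dimension at least $2$. Consequently the index jumps of $L_q$ across a focal point are at least $2$. Hence a height or distance function cannot have critical points of index $1$ or $m-1$ arising from focal crossings.

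Suppose now that $f(x)=f(y)$ with $x\ne y$. I would take $q$ near this double point and look at the sublevel sets of $L_q$. The two sheets through the double point produce a sublevel set which changes connectivity, or a $1$-cycle which the immersion cannot fill, in an index that is forbidden by the index-gap property. The more robust alternative is a different route. Use analyticity together with Kuiper's tight/taut machinery: a Morse--Bott immersion whose distance functions all have index gaps at least two is $0$-tight, so every sublevel set $\{L_q\le c\}$ is connected. Connectedness of small sublevel sets around a double point then contradicts the presence of two local sheets. In detail, for $q$ on the normal line of one sheet at small distance, $\{L_q\le c\}$ for small $c$ has two components, one near each sheet.

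\emph{Conclusion.} Once $f$ is known to be a compact connected embedded hypersurface satisfying \eqref{eq:D}, Proposition~\ref{thm:B} (the hypersurface case of Theorem~\ref{thm:general}(2)) yields that $f$ is $\Z_2$-taut. The hardest point is establishing $0$-tightness from the Morse--Bott and index-gap structure without already knowing embeddedness. I would handle it by a Morse-theoretic connectivity argument on $M$ itself. The number of components of $\{L_q\le c\}\subset M$ changes only at index-$0$ critical points, since index-$1$ critical points are excluded by the multiplicity gap. The argument then tracks $c$ from the global minimum upward.
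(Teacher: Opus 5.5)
Your argument, once the exploratory detours are set aside, is the paper's. Codazzi gives \eqref{eq:D}; multiplicities stay $\ge2$ at singular points, so the index of a nondegenerate critical point of $L_q$ (a sum of principal multiplicities, read off from the Hessian $I-tA$) is never one; hence a Morse $L_q$ has connected sublevel sets and a unique minimum, which contradicts the two nondegenerate minima at a double point, and Proposition~\ref{thm:B} finishes. The one detail to add is that your handle count only covers Morse $L_q$, so you must move the center near the double point to a nearby Morse center, where the two nondegenerate local minima persist in disjoint neighborhoods, exactly as in Proposition~\ref{prop:embedding}; neither analyticity nor the Morse--Bott property is needed for this step.
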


Sections~\ref{sec:degree} and~\ref{sec:continuation} establish
the degree bound and continuation argument for
Theorem~\ref{thm:A}. Section~\ref{sec:tautness} proves
Proposition~\ref{thm:B} and the reduction of Theorem~\ref{thm:general}
to the hypersurface case, and Section~\ref{sec:multiplicity}
treats the multiplicity corollaries.
Section~\ref{fin:sec:carriers} proves the finiteness results,
including Theorem~\ref{fin:thm:tautdeformations} and its
hypersurface specialization, Corollary~\ref{thm:C}.
The final two sections give
the topological consequences and discuss examples and scope.

\medskip
\noindent\textbf{Acknowledgements.}
I am grateful to Alexander Lytchak for his valuable feedback on an early version of this manuscript and for his continued interest, support, and insightful mathematical discussions over many years.

This article was produced in collaboration with ChatGPT, an AI
assistant developed by OpenAI. The collaboration included
developing proof strategies, refining mathematical arguments,
cross-referencing known examples and results with the relevant
literature and drafting the manuscript. The author assumes full
responsibility for the entire content of the article.

\section{A uniform degree bound on the regular pieces}\label{sec:degree}

We first recall the curvature-surface parametrization and finite-jet construction for Dupin hypersurfaces of Cecil--Chi--Jensen \cite[Section~3]{CCJ},
which they trace to an earlier proof sketch of Pinkall.
We formulate this construction in semialgebraic families
and extract an algebraic degree bound depending only on
the dimension. It turns out that this degree bound is the key step to control the description of Dupin hypersurfaces in the neighborhood of singular strata through zero sets of irreducible polynomials, even with infinitely many regular components.

\subsection{Rational curvature surfaces}
It is well known that the relevant geometric information on a
proper Dupin piece of a hypersurface can be obtained from the
shape operator $A$, its eigenvalues and their derivatives via
algebraic operations (see \cite{CCJ}). In particular, these data
provide locally rational parametrizations of curvature surfaces. For the sake of completeness, we briefly recall this description. Let $\lambda_1<\cdots<\lambda_s$ be the
distinct principal curvatures, with multiplicities $m_i=\dim E_i$, and put
\[
 \Pi_i=\prod_{j\ne i}\frac{A-\lambda_jI}{\lambda_i-\lambda_j},
\]
for the projection onto the $i$-th curvature distribution. For $X,Y\in E_i=\im\Pi_i$ and $Z\in E_j$, the Codazzi equation gives
\begin{equation}\label{eq:codazzi-leaf}
 (\lambda_i-\lambda_j)g(\nabla_XY,Z)
       =Z(\lambda_i)g(X,Y).
\end{equation}
In particular, $E_i$ is integrable and a direct computation using \eqref{eq:codazzi-leaf} shows that the Euclidean second
fundamental form of each leaf is $g(X,Y)\eta_i$, where
\begin{equation}\label{eq:eta}
 \eta_i=\lambda_i\nu+
       \sum_{j\ne i}\frac{df(\Pi_j\grad\lambda_i)}{\lambda_i-\lambda_j}
\end{equation}
is the curvature normal vector of the leaf $L_i$ of the curvature distribution $E_i$. The curvature leaves are open subsets of round spheres or affine planes
\cite[Section 3]{CCJ}. Note that this assertion includes one-dimensional
curvature surfaces, because of the Dupin condition. For multiplicity at least two this follows automatically from Codazzi, even without the Dupin condition. The Dupin condition is merely a requirement for the multiplicity one case on the regular part.
The sphere or plane through $f(p)$ with tangent space $df(E_i(p))$
and curvature vector $\eta_i(p)$ has the standard rational parametrization
\begin{equation}\label{eq:leaf-map}
 \Phi_i(p,v)=f(p)+
 \frac{v+\frac12|v|^2\eta_i(p)}{1+\frac14|\eta_i(p)|^2|v|^2}
 \ \ \text{ for }  v\in df(E_i(p)).
\end{equation}
For $\eta_i\ne0$, its center is $f(p)+\eta_i/|\eta_i|^2$ and
its radius is $1/|\eta_i|$. For $\eta_i=0$, the formula describes
the affine plane. The differential with respect to $v$ at zero is
the identity. Thus \eqref{eq:leaf-map} parametrizes a neighborhood
of $f(p)$ in its curvature leaf.

\subsection{Semialgebraic propagation of jets}
We first review the geometric jet construction and its
propagation due to Cecil--Chi--Jensen \cite{CCJ}, before
discussing some semialgebraic consequences that lead to
a universal degree bound.

For graph coordinates $F(x)=(x,h(x))$ we obtain the following expressions depending on $h$: 
\begin{equation}\label{eq:graph-data}
 g_{ab}=\delta_{ab}+h_ah_b,\qquad
 w=(1+|Dh|^2)^{1/2},\qquad
 \nu=\frac{(-Dh,1)}w,\qquad A=\frac1w g^{-1}D^2h.
\end{equation}
Fix an ordered multiplicity pattern $(m_1,\ldots,m_s)$, with
$\sum m_i=m$. The distinct eigenvalues are uniquely determined
semialgebraically by
\[
 \det(tI-A)=\prod_{i=1}^s(t-\lambda_i)^{m_i}
 \ \ \text{ with } \ \ \lambda_1<\cdots<\lambda_s.
\]
Their derivatives are determined by the spectral identity
\begin{equation}\label{eq:eigen-derivative}
 \partial_\ell\lambda_i=\frac1{m_i}\tr(\Pi_i\partial_\ell A).
\end{equation}
Thus successive differentiation, together with the polynomial expression
for $\Pi_i$, expresses every finite jet of $\lambda_i$ and $\Pi_i$
by finitely many algebraic operations on a finite jet of $h$ and the
eigenvalue values. The only spectral denominators are nonzero
differences $\lambda_i-\lambda_j$. This works in the graph frame,
which need not be orthonormal.

Choose columns $\Pi_ie_{j_1},\ldots,\Pi_ie_{j_{m_i}}$  of the matrix representing $\Pi_i$ forming a
basis of $E_i$ at $x_0$, and choose a coordinate inclusion
$B_i\colon\R^{m-m_i}\to\R^m$ whose image complements $E_i(x_0)$.
Both choices are specified by nonvanishing minors, with finitely many
possibilities depending only on $m$. The fact that all considered constructions can be described by finitely many formulas, independently of the concrete hypersurface, is crucial to obtain our global degree bound below.

For $a \in \R^{m_i}$ and $y \in \R^{m-m_i}$ near the origin, define
\begin{align*}
 v(a,y)&=\sum_{\ell=1}^{m_i}a_\ell\,
 dF_{x_0+B_iy}(\Pi_i e_{j_\ell}),\\
 Z_i(a,y)&=\Phi_i(x_0+B_iy,v(a,y)).
\end{align*}
Here the first argument of $\Phi_i$ denotes the point on the graph.
Let $\pi$ be projection onto the first $m$ coordinates. The
differential of $\pi\circ Z_i$ at $(0,0)$ is invertible, because its columns
are the chosen basis of $E_i$ and the chosen complement. Since
$Z_i$ lies on the hypersurface, it yields a local parametrization. These parametrizations supplement the curvature surface parameters given by \eqref{eq:leaf-map} with transverse coordinates providing a transport of jets locally along the curvature surfaces as described below. 

If we set $Z_i^{m+1}:= \operatorname{pr}_{m+1}\circ Z_i$, where $\operatorname{pr}_{m+1}:\R^{m+1} \to \R$ denotes the projection onto the last coordinate, we have
\[Z_i(a,y)=((\pi\circ Z_i)(a,y),Z_i^{m+1}(a,y)).\]

The $k$-jet of $Z_i$ at $(a,0)$ is obtained by algebraic operations
from $a$ and $j^{k+3}_{x_0}h$. Indeed, an $r$-jet of $\eta_i$
requires at most an $(r+3)$-jet of $h$, by
\eqref{eq:eta}--\eqref{eq:eigen-derivative}. If $\pi Z_i$ is
locally invertible at $(a,0)$, the identity
\[
 h=Z_i^{m+1}\circ(\pi\circ Z_i)^{-1}
\]
along the graph $F(x)=(x,h(x))$ therefore recovers the $k$-jet of $h$ at the new base point. Jets of an inverse
map are rational expressions in its jets, with powers of its
Jacobian determinant as denominators, because derivatives of an inverse map are rational expressions of the derivatives of the original map. For $k=0$, only the value of
$Z_i$ is required. Consequently, for each choice of minors there is
a semialgebraic transformation
\begin{equation}\label{eq:jet-map}
 T_{i,k}\colon(a,j^{k+3}_{x_0}h)\longmapsto j^k_{x_a}h
 \ \ \text{ with } x_a=(\pi\circ Z_i)(a,0),
\end{equation}
on the domain where the specified eigenvalue pattern, minors, and
denominators are valid. We also include the base point in the jet data.

The important observation is that the jet at $x_a$ can be obtained from the jet at $x_0$ by finitely many algebraic operations and that these transformations are determined by universal finite-dimensional formulas, i.e., formulas that are independent of the concrete hypersurface of fixed dimension.
To define them on formal jet data, fix an order of differentiation in
\eqref{eq:eigen-derivative} and impose the indicated algebraic
conditions. 
On jets of actual proper Dupin graphs, these formulas agree
with the geometric construction. We use the same formulas
on semialgebraic domains of formal jet data. In particular,
no semialgebraicity assertion about the actual set of jets
realizable by solutions of the Dupin equations is needed.

To describe the general space of admissible jets more precisely, let $\Sigma_m$ denote the set of finite indices $\sigma$ recording the multiplicity pattern and
choices of minors in the discussion above, and note that we have $|\Sigma_m|<\infty$. 

For every $\sigma \in \Sigma_m$ we have a semialgebraic map  
\[\Psi_\sigma: D_\sigma \to \R^{m+1},\]
where $D_\sigma \subset J^{3s}(\R^m,\R)\times\R^{m_1}\times\ldots\times\R^{m_s}$ is the corresponding semialgebraic domain. Starting with a $3s$-jet, the map $\Psi_\sigma$ is defined by a composition of
\[
 T_{1,3s-3},\ T_{2,3s-6},\ \ldots,\ T_{s,0},
\]
where the involved transformations are determined by $\sigma$. 

Fixing the initial jet $J$, we obtain the set
\[S_{\sigma,J}=\{\Psi_\sigma(J,a) \ | \ (J,a)\in D_\sigma\},\]
which is a semialgebraic subset of $\R^{m+1}$, because it is the semialgebraic image of a semialgebraic set by the theorem of Tarski--Seidenberg. These sets are the fibers over the initial jets of the set 
\[S_{\sigma}=\{(J,z) \ | \ \exists a: (J,a)\in D_\sigma \wedge z=\Psi_\sigma(J,a)\},\]
that is also semialgebraic by the theorem of Tarski--Seidenberg. 

For an actual proper Dupin graph, zero steps preserve the corresponding
jets. Therefore the derivative in the $a$-variables at zero, with $\sum_i\dim a_i=m$, has
image
$df(E_1)\oplus\cdots\oplus df(E_s)$ and rank $m$.
Therefore, for a choice of $\sigma$ valid at the base point,
the restriction of $a\mapsto\Psi_\sigma(J,a)$ to a sufficiently
small neighborhood of $a=0$ parametrizes a neighborhood of
the chosen point in the hypersurface. This neighborhood is
contained in the full fiber image $S_{\sigma,J}$, which is
all that is needed for the degree bound below.

Our next goal is to show that all components of proper Dupin pieces of a hypersurface are annihilated by an irreducible polynomial with globally bounded degree. To this end, we first observe that all fibers $S_{\sigma,J}$ are contained in the zero set of a polynomial with bounded degree. For the following discussion we use the common identification of the space of jets $J^{3s}(\R^m,\R)$ with $\R^\ell$. 

\begin{lemma}\label{lem:family-degree}
Let $\mathcal D\subset\R^\ell\times\R^m$ be semialgebraic and
let $\Psi\colon\mathcal D\to\R^{m+1}$ be semialgebraic. There
is an integer $D_S$ such that each nonempty fiber image
$S_J=\{\Psi(J,a)\ | \ (J,a)\in\mathcal D\}$ is contained in the
zero set of a nonzero polynomial of degree at most $D_S$.
\end{lemma}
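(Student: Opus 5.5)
The plan is to prove the lemma by a uniform-complexity argument: first bound the dimension of every fiber image, then read off a vanishing polynomial directly from a fixed first-order description of the total set
\[
 S=\{(J,z)\ |\ \exists a:(J,a)\in\mathcal D\wedge z=\Psi(J,a)\}\subset\R^\ell\times\R^{m+1}.
\]
No triviality theorem should be needed, only Tarski--Seidenberg and the standard dimension theory of semialgebraic sets \cite{BCR}.

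\emph{Step 1 (dimension).} For each $J$, the fiber $\mathcal D_J=\{a\ |\ (J,a)\in\mathcal D\}$ is a semialgebraic subset of $\R^m$, so $\dim\mathcal D_J\le m$. The set $S_J$ is the image of $\mathcal D_J$ under the semialgebraic map $a\mapsto\Psi(J,a)$. Semialgebraic maps do not raise dimension, so $\dim S_J\le m<m+1$. In particular, $S_J$ has empty interior in $\R^{m+1}$.

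\emph{Step 2 (uniform description).} Since $S$ is semialgebraic by Tarski--Seidenberg, it can be written as a finite union
\[
 S=\bigcup_{k=1}^N\{(J,z)\ |\ P_{k,1}(J,z)=\cdots=P_{k,r_k}(J,z)=0,\ Q_{k,1}(J,z)>0,\ldots,Q_{k,t_k}(J,z)>0\},
\]
with polynomials of total degree at most some $d$. Here $N$, $d$, the $r_k$, the $t_k$ and the polynomials themselves are fixed once and for all and do not depend on $J$. Specializing $J$ expresses each $S_J$ as the union of the corresponding basic sets $B_{k,J}\subset\R^{m+1}$. Each $B_{k,J}$ is cut out by the polynomials $P_{k,i}(J,\cdot)$ and $Q_{k,j}(J,\cdot)$, all of degree at most $d$ in $z$.

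\emph{Step 3 (choosing the polynomial).} Fix $J$ with $S_J\neq\emptyset$, and consider a nonempty piece $B_{k,J}$.
\begin{itemize}
\item If every $P_{k,i}(J,\cdot)$ were the zero polynomial in $z$, then $B_{k,J}$ would be defined by strict inequalities alone. It would then be a nonempty open subset of $S_J$, contradicting Step 1.
\item Hence some $P_{k,i_k}(J,\cdot)$ is a nonzero polynomial of degree at most $d$, and it vanishes on $B_{k,J}$.
\end{itemize}
Let $F_J$ be the product of these chosen polynomials over the nonempty pieces. Then $F_J$ is nonzero, since $\R[z]$ is a domain. It vanishes on $S_J$, and its degree is at most $Nd$. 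So $D_S:=Nd$ works.

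The only point needing care is Step 3: the polynomials $P_{k,i}(J,\cdot)$ can degenerate to zero for special $J$. The dimension bound of Step 1 is exactly what rules out a nonempty piece on which all equations degenerate. The remaining steps are routine.
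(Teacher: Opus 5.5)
Your proof is correct and follows essentially the paper's argument. Both use a fixed quantifier-free description of $S$ from Tarski--Seidenberg, the dimension bound forcing $S_J$ to have empty interior, and the exclusion of equations that specialize to zero, followed by a product of the surviving polynomials. The only cosmetic difference is that you use the disjunctive normal form of basic sets and choose one nonvanishing equation per nonempty piece. The paper instead argues pointwise via local sign constancy for a general Boolean formula and multiplies all specialized polynomials that do not vanish identically.
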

\begin{proof}
Quantifier elimination gives a finite combination of sign conditions on polynomials $R_\tau(J,z)$, so that we obtain a Boolean description of the set 
$S= \{(J,z)\ | \ z\in S_J\}$ in terms of finitely many polynomials
\cite[Chapter~2]{BCR}, i.e., the Boolean rule $\mathcal B(R_1(J,z),\ldots,R_N(J,z))$ determines whether a point $(J,z)$ belongs to the set.

After fixing $J$ with $S_J\ne\emptyset$, the set $S_J$ has
dimension at most $m$, because it is the image of a semialgebraic
subset of $\R^m$. As a subset of $\R^{m+1}$, it therefore has
empty interior. Some of the specialized polynomials
$R_j(J,\cdot)$ may vanish identically. We replace their sign
conditions in the Boolean formula by the corresponding constant
truth values and consider only the remaining polynomials.
If none of these vanished at a point of $S_J$, all their signs
would remain unchanged on a small ball around that point.
The Boolean formula would then hold throughout the ball,
contradicting the empty interior of $S_J$. Thus, at every point
of $S_J$, at least one of the remaining polynomials vanishes.
Their product is a nonzero polynomial vanishing on $S_J$.
Its degree is at most $\sum_j\deg_zR_j$, independently of $J$.
\end{proof}

\begin{proposition}\label{prop:degree}
For each $m$ there is an integer $D(m)$ such that every connected
proper Dupin graph in $\R^{m+1}$ is annihilated by a nonconstant
irreducible real polynomial of degree at most $D(m)$.
\end{proposition}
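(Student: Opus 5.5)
The plan is to apply Lemma~\ref{lem:family-degree} to each of the finitely many semialgebraic maps $\Psi_\sigma\colon D_\sigma\to\R^{m+1}$, $\sigma\in\Sigma_m$. This gives integers $D_{S,\sigma}$, and I set $D(m)=\max_{\sigma\in\Sigma_m}D_{S,\sigma}$. The maximum is finite because $|\Sigma_m|<\infty$, and it depends only on $m$ because the formulas defining $D_\sigma$ and $\Psi_\sigma$ are universal. Formally, $D_\sigma$ lies in $J^{3s}\times\R^m$, which has the form $\R^\ell\times\R^m$ required by the lemma, since $\sum m_i=m$. So the lemma gives, for each $J$ with $S_{\sigma,J}\neq\emptyset$, a nonzero polynomial $P_{\sigma,J}$ with $\deg P_{\sigma,J}\le D(m)$ that vanishes on $S_{\sigma,J}$.

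Now let $G$ be a connected proper Dupin graph in $\R^{m+1}$. Pick a point $x_0$, its jet $J=j^{3s}_{x_0}h$, and a $\sigma$ valid there. By the discussion preceding Lemma~\ref{lem:family-degree}, $a\mapsto\Psi_\sigma(J,a)$ near $a=0$ parametrizes an open neighborhood $U$ of $F(x_0)$ in $G$, and $U\subset S_{\sigma,J}$. Hence $P:=P_{\sigma,J}$ vanishes on the open piece $U$ of $G$. Factor $P$ into real irreducible factors $P=\prod_k Q_k$, so that $\deg Q_k\le D(m)$.

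The key step is to pass from the open piece $U$ to all of $G$ and to a single irreducible factor, and I expect this to be the main obstacle. First I need an irreducible factor vanishing on an open subset. Since $U=\bigcup_k (U\cap Z(Q_k))$ is a finite union of closed subsets of the $m$-manifold $U$, Baire's theorem gives one factor $Q=Q_k$ whose zero set contains a nonempty open subset $V\subset U$. Here $Q$ is nonconstant, because a nonzero constant has empty zero set.

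Second, I need to propagate the vanishing of $Q$ from $V$ to all of $G$. Proper Dupin hypersurfaces are real analytic. One way to see this is by the same jet propagation: the graph function is determined by finite jets and rational data along curvature leaves. Alternatively, I can invoke Cecil--Chi--Jensen's local analytic algebraicity of proper Dupin hypersurfaces \cite{CCJ}. If I want to avoid citing analyticity, I would instead run a connectedness argument. Let $W\subset G$ be the set of points having a neighborhood in $G$ on which $Q$ vanishes. This set is open and nonempty. To show it is closed, take a limit point $p$ and apply the same construction at $p$. This yields some polynomial $Q'$ of bounded degree vanishing near $p$, and hence on an open set overlapping $W$. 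On an $m$-dimensional irreducible analytic germ, two irreducible polynomials vanishing on an open piece must have the same Zariski closure, namely the irreducible hypersurface $Z(Q)$. It follows that $Q'$ is a scalar multiple of $Q$, hence $Q$ vanishes near $p$. Using analyticity of $G$ directly is cleaner: $Q\circ F$ is analytic on the connected domain and vanishes on an open set, so it vanishes identically. Either way, $Q$ annihilates $G$ and has degree at most $D(m)$, which proves the proposition.
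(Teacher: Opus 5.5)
Your proposal is correct and is essentially the paper's proof. You take the maximum of the bounds from Lemma~\ref{lem:family-degree} over the finitely many $\sigma\in\Sigma_m$ and obtain a bounded-degree relation on a small neighborhood contained in a fiber image. You then select an irreducible factor vanishing on an open set, using Baire where the paper uses that analytic functions on a connected domain form an integral domain. Finally, you extend that factor to the whole connected graph by the identity theorem, using the analyticity of proper Dupin hypersurfaces due to Cecil--Chi--Jensen.

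The analyticity-free connectedness alternative you sketch is unnecessary and, as written, incomplete. Baire at a limit point $p$ only gives an irreducible factor vanishing on some open set near $p$, not on a neighborhood of $p$, and the irreducible-germ claim already presupposes analyticity. The analytic route you settle on is the right one.
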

\begin{proof}
By Lemma~\ref{lem:family-degree} we obtain an assignment $\Sigma_m \to \mathbb N, \sigma \mapsto D_{S_\sigma}$. Since there are only finitely many
families $S_\sigma$, there is a largest resulting
degree bound $D(m)$. 
A sufficiently small neighborhood of any point of a proper
Dupin graph is contained in one of these fiber images, so
it satisfies a nonzero polynomial relation with this bound.
The graph is analytic by
\cite[Theorem~4]{CCJ}. On a connected graph neighborhood,
one irreducible factor of that polynomial vanishes
identically and the analytic identity theorem extends this
factor's relation to the entire connected graph. Its degree
is no larger than that of the original polynomial.
\end{proof}

\section{Continuation across the singular locus}\label{sec:continuation}

In this section we prove the analyticity part of Theorem \ref{thm:general} in the case of a Dupin hypersurface, to which we will reduce the general case later by a tube construction.

\begin{theorem}\label{thm:A}
Let $f:M^m\to\R^{m+1}$ be a $C^\infty$ immersion satisfying
\eqref{eq:D}. Every local image branch is real-analytic and locally
Nash. There is an analytic structure compatible with the smooth
structure on $M$ for which $f$ is analytic.

Moreover, there is an integer $D(m)$ depending only on $m$ such
that the image of each connected component of $M$ is contained
in the zero set of a nonconstant irreducible real polynomial of
degree at most $D(m)$.
\end{theorem}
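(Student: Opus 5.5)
The proof is local, so we work in graph coordinates $F(x)=(x,h(x))$ over a small ball $U$ around an arbitrary point $x_0$. \textbf{Step 1.} On each connected component $C$ of $\G\cap U$, condition \eqref{eq:D} makes $F|_C$ a connected proper Dupin graph. Proposition~\ref{prop:degree} then gives a nonconstant irreducible polynomial $P_C$ with $\deg P_C\le D(m)$ and $P_C\circ F=0$ on $C$. After normalizing the coefficient vector, the $P_C$ lie in a compact set: the unit sphere in the finite-dimensional space of polynomials of degree at most $D(m)$. Since $\G$ is dense but may have infinitely many components near $x_0$, we cannot simply take a product of the $P_C$.

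\textbf{Step 2: one polynomial for all components.} Pick components $C_k$ accumulating at $x_0$, and pass to a subsequence with $P_{C_k}\to P$, $P\ne0$, $\deg P\le D(m)$. I would show that $P\circ F$ vanishes to infinite order at $x_0$. The idea is to use finite jets and resultants. The smooth function $h$ is determined on $C_k$ by the algebraic equation $P_{C_k}(x,h)=0$. Implicit differentiation then expresses each finite jet of $h$ at points of $C_k$ as a rational function of $x$, $h$ and the coefficients of $P_{C_k}$, with denominators that are powers of $\partial_{z}P_{C_k}$. Where this derivative degenerates, one replaces it by a suitable nonvanishing discriminant or resultant with respect to $z$, after a generic linear change of coordinates.

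Passing to the limit, these uniform algebraic jet relations force $j^k_{x_0}(P\circ F)=0$ for every $k$. Equivalently, the condition ``$P\circ F$ vanishes to order $k$ at a point'' is a closed condition on (coefficients, jet), and it holds along points of $C_k$ converging to $x_0$.

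From infinite-order vanishing I then aim to show that $P\circ F\equiv0$ near $x_0$. The route is to show that the set $Z$ of points where some polynomial of degree at most $D(m)$, taken from the compact coefficient sphere, vanishes to infinite order along the graph is both open and closed in $U$:
\begin{itemize}
\item[] \emph{Closedness} follows from compactness of the coefficient sphere.
\item[] \emph{Openness} follows from Step 1 on $\G$ together with density of $\G$.
\end{itemize}
Once a single $P$ works on all of $U$, every point lies on the graph of an algebraic function. A point of $\G$ then gives $C^\infty$ agreement with an algebraic branch.

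\textbf{Step 3: analyticity.} Given $P(x,h(x))=0$ on $U$ with $h$ smooth, $h$ is a continuous root of a polynomial. A smooth root of a polynomial equation is Nash: semialgebraicity of the graph follows because it is a union of finitely many sheets of $\{P=0\}$ selected by a continuous choice, and a smooth semialgebraic function is analytic \cite[Chapter~8]{BCR}. If $\partial_zP$ vanishes identically on the graph, replace $P$ by its lowest-order $z$-derivative that does not. Distinct branches give local Nash image branches, and the Nash graph charts supply the compatible analytic structure, since the transition maps are Nash. The global statement uses the identity theorem: on a connected component of $M$, the set where a fixed irreducible factor of degree at most $D(m)$ vanishes on the image is open and closed. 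Hence one irreducible factor serves for the whole component, with degree at most $D(m)$.

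\textbf{Main obstacle.} The hard step is Step 2, specifically the openness or limit argument at singular points where infinitely many components of $\G$ accumulate. There the jet bound of order $3s$ may degenerate, because the eigenvalue denominators $\lambda_i-\lambda_j$ tend to zero. The first thing I would try is to avoid spectral data entirely at the limit and use only the polynomial relations $P_{C_k}(F)=0$. Bézout-type control on $\deg\le D(m)$ together with resultants in $z$ should show that infinitely many distinct irreducible $P_{C_k}$ cannot all vanish on pieces accumulating at a smooth graph point unless the limiting $P$ vanishes on an open set. The point is that two distinct irreducible polynomials meet in codimension two, while the graph pieces are $m$-dimensional and cluster in a full neighborhood.
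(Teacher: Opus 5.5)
\textbf{There is a genuine gap in Step 2.} Steps 1 and 3 agree with the paper: Proposition~\ref{prop:degree} on each regular component, discriminant-based selection of Nash branches, and the identity theorem for the global polynomial.

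The first half of Step 2 is also fine, and needs no implicit differentiation. The map $(c,x)\mapsto j^k_x(P_c\circ F)$ is continuous, so a limit $P$ of the normalized $P_{C_k}$ makes $P\circ F$ flat at $x_0$. The limit need not be irreducible, but formal power series form a domain, so some irreducible factor of $P$ is flat at $x_0$.

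The gap is the passage from flatness at $x_0$ to vanishing on a neighborhood of $x_0$. Your set $Z$ is closed and contains the dense set $\G\cap U$, so $Z=U$ for trivial reasons. But membership in $Z$ only says that \emph{some} polynomial is flat at that point. Flatness of the merely smooth function $P\circ F$ at a point does not propagate unless you already know $h$ is analytic, which is what is being proved. So ``openness from density of $\G$'' is exactly the unproved claim.

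Compactness of the coefficient sphere cannot supply it. Nothing in your argument excludes the polynomial attached to the regular components drifting across a nowhere dense singular set the way a Cantor function does: continuous, locally constant on a dense open set, yet nonconstant. The B\'ezout heuristic does not help either. Each $P_{C_k}$ only has to vanish on its own piece of the graph, and these pieces are disjoint, so no $m$-dimensional set is forced into the common zero set of two of them.

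The paper closes this gap by making the selection linear and smooth in $x$. For $K=2D^2$ it uses the $K$-jet map $\mathcal T_x\colon\Pcal_D\to\R^{\{|\beta|\le K\}}$, $Q\mapsto(\partial^\beta[Q(\cdot,h)](x))_\beta$, which depends smoothly on $x$. The argument runs as follows.
\begin{enumerate}
\item Lemma~\ref{lem:contact}, via a resultant of $x$-degree at most $2D^2$, shows: if an irreducible $P$ has contact of order $K$ at $x$, then $\ker\mathcal T_x=(P)\cap\Pcal_D$ and $\rank\mathcal T_x=N_D-N_{D-\deg P}$.
\item Choose $P_*$ of \emph{maximal} degree among the component polynomials. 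Then $\rank\mathcal T\le r_*:=N_D-N_{D-\deg P_*}$ on $\G$, hence on all of $U$ by continuity of minors.
\item Take $\Omega$ to be a component of the interior of $\{P_*\circ F=0\}$; this is the correct version of your open--closed argument. At a boundary point of $\Omega$, flatness of $P_*\circ F$ gives rank exactly $r_*$. Lower semicontinuity then makes the rank constant on a neighborhood $V$.
\item On $V$ the orthogonal projection onto $\ker\mathcal T_x$ is smooth. Its differential vanishes on the dense set $\G\cap V$, where the kernel is locally constant, so the projection is constant on $V$. Hence $P_*$ vanishes on the graph over $V$, so $\Omega$ has no boundary in $U$.
\end{enumerate}
This differential argument, not compactness, is what excludes Cantor-type behavior. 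Your resultant idea points the right way, but without the rank formulation, the maximal-degree choice and the smoothness of the kernel projection, Step 2 does not go through.
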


To prove Theorem \ref{thm:A}, we have to continue the analytic structure over the singular locus. This will be done by controlling the space of polynomials of suitable degree which vanish on proper Dupin pieces. The next two lemmas concern arbitrary smooth graphs and isolate
the continuation step from the Dupin geometry. Let $\Pcal_D$ be
the real vector space of polynomials in $(x_1,\ldots,x_m,y)=(x,y)$ of
total degree at most $D$, and put
\[
 N_D=\dim\Pcal_D=\binom{m+1+D}{m+1}.
\]
We fix a constant $K$ that is large enough, say $K=2D^2$, and consider a smooth real-valued function $h$ on an open set $U\subset\R^m$. Define a linear map
\begin{equation}\label{eq:T}
 \mathcal T_x\colon\Pcal_D\longrightarrow
       \R^{\{\beta\  | \ |\beta|\le K\}}\ \ \text{ by } \ \ 
 Q\longmapsto\bigl(\partial^\beta[Q(\cdot,h)](x)\bigr)_{|\beta|\le K}.
\end{equation}
In fixed bases this is a smooth matrix-valued function of $x$. The algebraic key observation for our continuation argument in Lemma \ref{lem:gluing} is that if an irreducible polynomial $P \in \Pcal_D$ vanishes on a proper Dupin piece, then every element of $\Pcal_D$ that also vanishes on that piece has to be a multiple of $P$. The following lemma describes this observation pointwise.

\begin{lemma}[Finite-order contact]\label{lem:contact}
Suppose a nonconstant irreducible polynomial $P \in \Pcal_D$ vanishes up to order $K$ at $x_0$, i.e., $P \in \ker\mathcal T_{x_0}$.
Then
\begin{equation}\label{eq:kernel}
 \ker\mathcal T_{x_0}=(P)\cap\Pcal_D
 \ \ \text{ and } \ \ \rank\mathcal T_{x_0}=N_D-N_{D-d} \ \text{ for } d = \deg P.
\end{equation}
\end{lemma}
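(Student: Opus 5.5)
The plan is to prove the two inclusions separately and then count dimensions. First, $(P)\cap\Pcal_D\subset\ker\mathcal T_{x_0}$ is immediate. If $Q=PR$ with $R$ a polynomial, then $Q(x,h(x))=P(x,h(x))\,R(x,h(x))$. By the Leibniz rule, every derivative of order at most $K$ of this product at $x_0$ is a sum of terms, each containing a derivative of order at most $K$ of $P(\cdot,h)$ at $x_0$. All of these vanish by hypothesis. For the dimension count, note that $(P)\cap\Pcal_D=P\cdot\Pcal_{D-d}$, since $\deg(PR)=d+\deg R$, and multiplication by $P$ is injective. Hence this space has dimension $N_{D-d}$, and once the kernel identity is established, the rank formula follows from rank--nullity.

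For the reverse inclusion, first observe that $P$ must involve $y$. If $P=P(x)$ were independent of $y$, then $P$ would be a polynomial of degree $d\le D\le K$ all of whose partial derivatives up to order $K$ vanish at $x_0$. Taylor expansion would then force $P\equiv0$, which is a contradiction. The same argument handles any $Q\in\ker\mathcal T_{x_0}$ with $\deg_yQ=0$: such a $Q$ is the zero polynomial, which lies in $(P)$. So let $Q\in\ker\mathcal T_{x_0}$ with $\deg_yQ\ge1$, and suppose $P\nmid Q$. Since $P$ is irreducible of positive $y$-degree, it is primitive as an element of $\R[x][y]$. By Gauss's lemma, $P$ and $Q$ then have no common factor of positive $y$-degree in $\R(x)[y]$. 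Therefore the resultant $R(x)=\Res_y(P,Q)\in\R[x]$ is a nonzero polynomial.

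The key properties of this resultant are the following.
\begin{itemize}
\item There is a polynomial identity $R=\alpha P+\beta Q$ with $\alpha,\beta\in\R[x,y]$, coming from the adjugate of the Sylvester matrix.
\item The degree bound $\deg R\le \deg P\cdot\deg Q\le D^2$ holds. This follows from the standard weighted-homogeneity argument for the Sylvester determinant, or simply from the cruder bound $\deg_yP\cdot\deg Q+\deg_yQ\cdot\deg P\le 2D^2=K$.
\end{itemize}
Substituting $y=h(x)$ gives $R(x)=\alpha(x,h)P(x,h)+\beta(x,h)Q(x,h)$. By the Leibniz argument of the first step, all partial derivatives of $R$ of order at most $K$ vanish at $x_0$. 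Since $\deg R\le K$, Taylor's formula at $x_0$ gives $R\equiv0$, which contradicts coprimality. Hence $P\mid Q$ and $\ker\mathcal T_{x_0}=(P)\cap\Pcal_D$.

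The only delicate step is the degree control of the resultant, since it justifies the choice $K=2D^2$. I would verify it directly from the Sylvester matrix. Its entries in the rows coming from $P$ have degree at most $\deg P$ in $x$, and those in the rows coming from $Q$ have degree at most $\deg Q$. There are $\deg_yQ$ rows of the first kind and $\deg_yP$ rows of the second kind, so every term of the determinant has $x$-degree at most $\deg_yQ\deg P+\deg_yP\deg Q\le 2D^2$. This is enough without invoking the sharper Bézout-type bound. The remaining ingredients are Gauss's lemma and the fact that a nonconstant irreducible polynomial in $\R[x,y]$ of positive $y$-degree is irreducible in $\R(x)[y]$, which are standard.
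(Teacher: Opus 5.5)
Your proof is correct and follows essentially the same route as the paper. Both arguments use Gauss's lemma to get a nonzero resultant $\Res_y(P,Q)=UP+VQ$ of $x$-degree at most $2D^2=K$, then substitute $y=h(x)$ and apply the Leibniz rule to force $R\equiv0$, and finally count dimensions via multiplication by $P$; the only minor difference is that you dispose of $y$-independent $Q$ directly by Taylor's formula, whereas the paper folds that case into the resultant argument via $R=Q^{\deg_yP}$.
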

\begin{proof}
The polynomial $P$ depends on $y$, since a nonzero polynomial
in $x$ alone cannot be annihilated by $\mathcal T_{x_0}$, because $K > D$.
Since $P$ is irreducible in $\R[x][y]$ and depends on $y$, it is primitive with respect to $y$ and therefore also irreducible over $\R(x)$ by Gauss's lemma. Now assume 
$Q\in\ker\mathcal T_{x_0}$ and $P\nmid Q$. Then the resultant, i.e., the determinant of the Sylvester matrix of $P$ and $Q$,
\[
 R(x)=\Res_y(P,Q)=U(x,y)P(x,y)+V(x,y)Q(x,y)
\]
is a nonzero polynomial, with $U,V$ polynomial. The Sylvester
matrix has size at most $2D$ and entries of $x$-degree at
most $D$, so $\deg R\le2D^2=K$. This argumentation also works if $Q$ is independent
of $y$ with $R=Q^{\deg_yP}$.

Substitution of $y=h(x)$ kills the
$P$ term as well as the $Q$ term up to order at least $K$.
By the general Leibniz rule, $R$ vanishes to order greater than its degree, which 
contradicts our assumption. Hence $P\mid Q$. The converse inclusion in
\eqref{eq:kernel} follows immediately from the general Leibniz rule. Finally, multiplication by $P$
identifies $\Pcal_{D-d}$ with $(P)\cap\Pcal_D$, giving
the rank formula.
\end{proof}

Together with Proposition \ref{prop:degree}, we can extend the conclusions of Lemma \ref{lem:contact} from regular points to their components.

\begin{corollary}\label{cor:extend}
Let $x\mapsto(x,h(x))$, $x\in U$, be a hypersurface graph
satisfying \eqref{eq:D}, and let $C_\alpha\subset U$ be a
connected component of its regular set. Let $P_\alpha$ be
the irreducible polynomial of degree $d_\alpha\le D$
associated with the graph over $C_\alpha$ by
Proposition~\ref{prop:degree}. 
Then, for every \(x\in C_\alpha\), we have $\ker \mathcal T_x=(P_\alpha)\cap\mathcal P_D$. 
In particular, the map
$C_\alpha\longrightarrow \operatorname{Gr}_{N_{D-d_\alpha}}(\Pcal_D), \, x\longmapsto\ker \mathcal T_x$, is constant, and for every \(x\in C_\alpha\) we have
\[
\operatorname{rank}\mathcal T_x
=
\dim\mathcal P_D-\dim\mathcal P_{D-d_\alpha}.
\]

\end{corollary}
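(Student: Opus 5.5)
The plan is to reduce the statement directly to Lemma~\ref{lem:contact}: all that is needed is that $P_\alpha$ lies in $\ker\mathcal T_x$ for every $x\in C_\alpha$, with $D\ge d_\alpha$ and $K$ as fixed before \eqref{eq:T}.

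First we check the hypotheses of Proposition~\ref{prop:degree}. Over the connected component $C_\alpha$ of the regular set, the number of distinct principal curvatures is locally constant, hence constant on the connected set $C_\alpha$. Together with \eqref{eq:D}, this makes the graph over $C_\alpha$ a connected proper Dupin graph. Proposition~\ref{prop:degree} then provides a nonconstant irreducible polynomial $P_\alpha$ of degree $d_\alpha\le D(m)\le D$ with
\[
 P_\alpha(x,h(x))=0 \qquad\text{for all } x\in C_\alpha .
\]
Because $C_\alpha$ is open in $U$ and $h$ is smooth, the function $x\mapsto P_\alpha(x,h(x))$ vanishes identically on an open set. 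All of its partial derivatives of every order therefore vanish at each $x\in C_\alpha$. In particular $\partial^\beta[P_\alpha(\cdot,h)](x)=0$ for $|\beta|\le K$, that is, $P_\alpha\in\ker\mathcal T_x$. Note that $P_\alpha\in\Pcal_D$ because $d_\alpha\le D$.

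Next, fix $x\in C_\alpha$ and apply Lemma~\ref{lem:contact} at $x_0=x$ with $P=P_\alpha$. This gives
\[
 \ker\mathcal T_x=(P_\alpha)\cap\Pcal_D
 \qquad\text{and}\qquad
 \rank\mathcal T_x=N_D-N_{D-d_\alpha}=\dim\Pcal_D-\dim\Pcal_{D-d_\alpha}.
\]
The right-hand side of the kernel identity does not depend on $x$. Hence the map $x\mapsto\ker\mathcal T_x$ is constant on $C_\alpha$. Its value is a subspace of dimension $N_{D-d_\alpha}$, because multiplication by $P_\alpha$ identifies $\Pcal_{D-d_\alpha}$ with $(P_\alpha)\cap\Pcal_D$. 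So the map takes values in $\operatorname{Gr}_{N_{D-d_\alpha}}(\Pcal_D)$ as claimed.

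The only point requiring care is that Proposition~\ref{prop:degree} is stated for graphs in $\R^{m+1}$ that are proper Dupin on their whole domain. We must therefore make sure that restricting the graph to the open connected set $C_\alpha$ is legitimate, and that a single irreducible factor vanishes on all of $C_\alpha$ rather than only locally. This is exactly the identity-theorem step inside the proof of Proposition~\ref{prop:degree}, so no further argument is needed.
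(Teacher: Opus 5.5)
Your proposal is correct and follows essentially the same route as the paper. Proposition~\ref{prop:degree} gives $P_\alpha$ vanishing on the graph over the open connected set $C_\alpha$, hence $P_\alpha\in\ker\mathcal T_x$ for every $x\in C_\alpha$, and Lemma~\ref{lem:contact} then yields the $x$-independent kernel $(P_\alpha)\cap\Pcal_D$ of dimension $N_{D-d_\alpha}$ together with the rank formula, with your remarks on openness and the kernel dimension only making explicit what the paper leaves implicit.
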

\begin{proof}
By Proposition~\ref{prop:degree}, there exists an irreducible
polynomial $P_\alpha$ that vanishes identically on the graph
over $C_\alpha$. Consequently, \(P_\alpha\in\ker \mathcal T_x\) for every \(x\in C_\alpha\). Applying Lemma \ref{lem:contact} at each such point yields
\[
\ker \mathcal T_x=(P_\alpha)\cap\mathcal P_D.
\]
Since \(P_\alpha\) is independent of \(x\), the kernel is constant throughout \(C_\alpha\). The rank formula follows from Lemma \ref{lem:contact}.
\end{proof}

Now we are able to prove the following continuation Lemma as our technical key result to obtain the main result of this section, Theorem \ref{thm:A}.

\begin{lemma}[Continuation of bounded-degree relations]\label{lem:gluing}
Let $U\subset\R^m$ be connected and open, and let
$h\in C^\infty(U)$. Suppose $G\subset U$ is open and dense
and that, on each connected component $C_\alpha$ of $G$, the
graph of $h$ is annihilated by a nonconstant irreducible polynomial
$P_\alpha$ with $\deg P_\alpha\le D$. Then one such polynomial
vanishes on the graph over all of $U$, and $h$ is locally Nash.
\end{lemma}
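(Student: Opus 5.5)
The plan is to study the smooth matrix-valued map $x\mapsto\mathcal T_x$ from \eqref{eq:T} on all of $U$ and to show that its kernel is locally constant on $U$, not only on $G$. On each component $C_\alpha$ the argument of Corollary~\ref{cor:extend} applies verbatim; it uses only Lemma~\ref{lem:contact} and the hypothesis on $P_\alpha$, not the Dupin condition. It gives $\ker\mathcal T_x=(P_\alpha)\cap\Pcal_D$ and $\rank\mathcal T_x=N_D-N_{D-d_\alpha}$ there. Since $d_\alpha\ge1$, this rank is at least $N_D-N_{D-1}$. The goal is to pass from the dense set $G$ to every point of $U$.

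\textbf{Step 1 (closed condition).} Let $W=\{x\in U : \ker\mathcal T_x\neq 0\}$. This is the set where all $N_D\times N_D$ minors of $\mathcal T_x$ vanish, so it is closed, because the entries are smooth. It contains the dense set $G$, hence $W=U$. For every $x\in U$ there is therefore a nonzero $Q\in\Pcal_D$ with $Q(\cdot,h)$ vanishing to order $K$ at $x$.

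\textbf{Step 2 (local constancy).} Fix $x_0\in U$ and pick a nonzero $Q\in\ker\mathcal T_{x_0}$. Factor $Q$ into irreducibles; then some irreducible factor $P$ vanishes to order $K'$ on the graph at $x_0$. This is the delicate point, since orders add under products. The first idea is to use lower semicontinuity of rank instead: $\rank\mathcal T_x\ge\rank\mathcal T_{x_0}$ near $x_0$. One then compares with the values $N_D-N_{D-d}$, which are increasing in $d$.

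A more robust route is to avoid factoring. Consider the finitely many subspaces $(P)\cap\Pcal_D$ arising on components meeting a small ball $B$ around $x_0$, together with the limits of $\ker\mathcal T_x$ along $G$ as $x\to x_0$. By compactness of the Grassmannians, choose a sequence $x_k\to x_0$ in $G$ with $\ker\mathcal T_{x_k}=(P_k)\cap\Pcal_D$ converging. Normalizing coefficients, $P_k\to P_\infty\neq0$ in the finite-dimensional space $\Pcal_D$ (after passing to a subsequence with $d_k$ constant). Continuity of $\mathcal T$ gives $P_\infty\in\ker\mathcal T_{x_0}$.

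\textbf{Step 3 (uniqueness, the main obstacle).} One must show that only finitely many distinct $P_\alpha$ occur near $x_0$, and in fact that a single one does. This requires the resultant argument of Lemma~\ref{lem:contact} applied to two different components. Suppose $P_\alpha$ and $P_\beta$ are coprime irreducible polynomials of degree at most $D$ vanishing on the graph over components accumulating at $x_0$. Then $R=\Res_y(P_\alpha,P_\beta)$ is nonzero of degree at most $K$. Moreover, $R(x)=0$ on both $C_\alpha$ and $C_\beta$, since $y=h(x)$ is a common root there.

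That alone is not a contradiction, so I would instead apply it with $P_\infty$. Take an irreducible factor $P$ of $P_\infty$ vanishing on the graph over $G$-points near $x_0$. Such a factor exists by choosing the sequence so that a fixed factor of the limit works, using $\prod$ factors $=0$ on open sets and the identity theorem being unavailable. For each $P_\alpha$ with $C_\alpha$ near $x_0$, if $P\nmid P_\alpha$ then $\Res_y(P,P_\alpha)$ vanishes on $C_\alpha$. Its zero set is a proper algebraic set, which cannot contain an open set, so $P_\alpha$ is proportional to $P$.

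The hard part is arranging that $P$ vanishes on every $C_\alpha$ near $x_0$. For this I would use $\rank$ semicontinuity plus the fact that $\ker\mathcal T_x$ jumps only on a closed nowhere dense set. Equivalently, I would show that the set where $\ker\mathcal T_x$ equals a given $(P)\cap\Pcal_D$ is both open in $U$, by the contact lemma and lower semicontinuity of rank at points where $P\in\ker\mathcal T_x$, and closed, by continuity. Connectedness of $U$ then gives a single $P$ with $P(x,h(x))\equiv0$ on $U$.

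\textbf{Step 4 (Nash).} Once $P(x,h(x))\equiv0$ with $P$ irreducible and depending on $y$, the graph of $h$ lies in a semialgebraic set of dimension $m$. Since $h$ is smooth and its graph lies in the zero set of $P$, $h$ is semialgebraic on small semialgebraic neighborhoods, using a cell decomposition and the fact that $h$ picks a continuous root branch. A smooth semialgebraic function is Nash, hence $h$ is locally Nash \cite[Chapter~8]{BCR}.
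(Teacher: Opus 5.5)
There is a genuine gap in Steps~2--3: the proposal never shows that one polynomial vanishes on the graph over all of $U$.

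Your final open--closed scheme has the right shape, and closedness is fine. By Lemma~\ref{lem:contact}, the set $S_P=\{x\in U : \ker\mathcal T_x=(P)\cap\Pcal_D\}$ equals $\{x : \mathcal T_xP=0\}$, which is closed. Openness, however, does not follow from lower semicontinuity of the rank. At $x_0\in S_P$, semicontinuity only gives $\dim\ker\mathcal T_x\le\dim\ker\mathcal T_{x_0}$ nearby. That allows nearby components to carry a polynomial of larger degree, hence larger rank. It also allows the kernel to move to a different space $(P_\alpha)\cap\Pcal_D$ of the same dimension. Concluding $P\in\ker\mathcal T_x$ for $x$ near $x_0$ is precisely what must be proved, so the step is circular.

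The auxiliary claims you lean on are not established:
\begin{itemize}
\item that only finitely many $P_\alpha$ occur near $x_0$;
\item that $\ker\mathcal T_x$ jumps only on a closed nowhere dense set;
\item that some irreducible factor of the limit $P_\infty$ vanishes on the graph over an open set. In fact $P_\infty$ is only known to lie in $\ker\mathcal T_{x_0}$.
\end{itemize}
The resultant remark is also wrong. $\Res_y(P_\alpha,P_\beta)(x)=0$ needs a common root, and on $C_\alpha$ only $P_\alpha$ is known to vanish at $h(x)$. Were the resultant to vanish on the open set $C_\alpha$, that would already be a contradiction.

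Two ingredients are missing.

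\emph{Choice of maximal degree.} Choose $P_*$ among the $P_\alpha$ with maximal degree $d_*$. This exists because all degrees lie in $\{1,\dots,D\}$, even if $G$ has infinitely many components. Then $\rank\mathcal T_x\le r_*=N_D-N_{D-d_*}$ on $G$, hence on all of $U$, because the $(r_*+1)$-minors vanish on a dense set. At any $x_0$ with $P_*\in\ker\mathcal T_{x_0}$, Lemma~\ref{lem:contact} gives rank exactly $r_*$. Lower semicontinuity together with the global upper bound then makes the rank constant on a connected neighborhood $V$.

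\emph{Constancy of the kernel.} On $V$, the orthogonal projection $\mathcal K(x)$ onto $\ker\mathcal T_x$ is smooth. It is locally constant on the dense open set $G\cap V$, so $d\mathcal K=0$ there, and by continuity on all of $V$. Thus $\mathcal K$ is constant and $P_*(x,h(x))=0$ on $V$. Smoothness matters here: a merely continuous map can be locally constant on a dense open set without being constant.

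With these two points your open--closed argument works for $S_{P_*}$. The paper phrases the same argument via boundary points of the component of the interior of $\{P_*(x,h(x))=0\}$.

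Your Step~4 is acceptable in outline, but you should make the collision points explicit, as the paper does. Over a small ball, the graph is the closure of the graph over the dense open set where the leading coefficient times the discriminant of $P_*$ is nonzero. On each of the finitely many components of that set, $h$ follows one of finitely many ordered simple root branches.
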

\begin{proof}
By assumption, for every connected component $C_\alpha$ there is an irreducible polynomial $P_\alpha \in \Pcal_D$ with $P_\alpha(x,h(x)) = 0$ and $\deg P_\alpha = d_\alpha$ for all $x \in C_\alpha$. By Lemma \ref{lem:contact} we have $\ker \mathcal T_x = (P_\alpha)\cap \Pcal_D$ and $\rank \mathcal T_x = N_D - N_{D-d_\alpha}$.

Since the degrees $d_\alpha$ are integers between $1$ and $D$,
they have a maximum, even if $G$ has infinitely many connected
components. Choose $P_*$ among the $P_\alpha$ with maximal
degree $d_*$, and put $r_*=N_D-N_{D-d_*}$.
Lemma~\ref{lem:contact} gives
\[
 \rank\mathcal T_x=N_D-N_{D-\deg P_\alpha}\le r_*
\]
on all components $C_\alpha$. It follows that all $(r_*+1)$-minors vanish on the dense set $G$, hence
on $U$, because the maps $\mathcal T_x$ depend continuously, even smoothly, on $x \in U$. Thus, we have $\rank\mathcal T\le r_*$ everywhere on $U$.

Let $\Omega$ be the connected component of
$\operatorname{int}_U\{x\in U \ | \ P_*(x,h(x))=0\}$ containing the
regular component on which $P_*$ was chosen. We want to show $U = \Omega$ to finish the proof. To prove this claim, it is sufficient that there are no relative boundary points of $\Omega$ in $U$, since $U$ is connected and $\Omega$ is open.
For $x_0\in\partial_U\Omega$, every derivative of
$P_*(x,h(x))$ vanishes at $x_0$, by continuity of the derivatives. Therefore $P_\ast \in\ker \mathcal T_{x_0}$ and we obtain from Lemma~\ref{lem:contact} $\ker \mathcal T_{x_0} = (P_\ast)\cap \Pcal_D$ and
$\rank\mathcal T_{x_0}=r_*$. Because the rank is a lower semicontinuous function with maximal value $r_\ast$, the rank of $\mathcal T_x$ is constant on a small connected
neighborhood $V$ of $x_0$. 

To deduce that $\Omega = U$, we further need the constancy of the kernels on $V$.
We therefore fix an inner product on $\Pcal_D$, and denote by $\mathcal K(x): \Pcal_D \to \ker\mathcal T_x$ 
the orthogonal projection onto $\ker\mathcal T_x$, which depends smoothly on $x\in V$. On each regular
component, \eqref{eq:kernel} identifies the kernel with
the fixed space $(P_\alpha)\cap\Pcal_D$. Consequently
$d\mathcal K=0$ on $G\cap V$, hence on $V$ by density of $G\cap V$ in $V$. 
Thus $\mathcal K$ is constant on $V$, i.e., $\ker \mathcal T_x = \ker \mathcal T_{x_0}$ for all $x \in V$. Since
$P_*\in\ker\mathcal T_{x_0} = \ker\mathcal T_{x}$, the zeroth component of
\eqref{eq:T} gives $P_*(x,h(x))=0$ on all of $V$. Thus, the graph over $V$ is contained in the zero set of $P_\ast$. As $x_0$ is a boundary point of $\Omega$, the set $V$
meets $\Omega$, so that the union $\Omega \cup V$ is an open, connected set completely contained in the set $\{x \in U\ | \ P_\ast(x,h(x))=0\}$. By definition, this implies $V\subset\Omega$, which contradicts the choice of $x_0$ as a boundary point. Therefore, having no relative boundary, $\Omega$ is also closed in $U$, which implies $\Omega=U$ by connectedness of $U$.

To obtain Nash regularity at singular points of the
polynomial relation, we use the classical fact that a
$C^\infty$-smooth semialgebraic function on an open
semialgebraic set is real analytic, i.e., Nash
\cite[Chapter~8]{BCR}.

Write $a(x)$ for
the leading coefficient and $\Delta(x)$ for the discriminant of $P_*$ as a polynomial in
$y$. Since $P_*$ depends on $y$ and is
irreducible, it is also irreducible over the characteristic-zero field $\R(x)$, thus separable over \(\R(x)\). Consequently, its discriminant with respect to \(y\) does not vanish identically, i.e., \(\Delta(x)=\operatorname{Disc}_y(P_*)\not\equiv0.\) Likewise, we have \(a(x)\not\equiv0.\)

We define the exceptional algebraic set
\[
E=\{x\ | \ a(x)\Delta(x)=0\}.
\]
Since \(a\Delta\) is a nonzero polynomial, \(E\) has empty interior. Consequently, \(U\setminus E
\) is open and dense in \(U\), and for \(x_0\in U\setminus E\) we have \(a(x_0)\neq0\) and \(\Delta(x_0)\neq0.
\) Thus, the polynomial \(P_*(x_0,\cdot)\) has only simple complex roots in this case. In particular, for every real root \(y_0\), we have
\[
\frac{\partial P_*}{\partial y}(x_0,y_0)\neq0,
\]
and we can therefore apply the analytic implicit function theorem. This implies that in a neighborhood of \(x_0\), each real root extends to a real-analytic function \(y=\varphi(x)\) satisfying
\[
P_*(x,\varphi(x))=0.
\]
Since these branches are locally separated and their graphs are defined by an algebraic equation together with suitable inequalities distinguishing the individual roots, they are also semialgebraic.

Thus, locally, we obtain finitely many Nash functions \(\varphi_1,\ldots,\varphi_q,
\) whose values are precisely the real roots of \(P_*(x,\cdot)\). Since \(h\) is continuous and satisfies \(P_*(x,h(x))=0,
\) it must locally coincide with one of these branches. In particular, \(h\) is locally Nash on \(U\setminus E\). 

It remains to deal with points in \(E\), where roots may coincide or the leading coefficient may vanish. Let us therefore  consider a point \(x_0\in E\). Choose a sufficiently small open ball \(B\) around \(x_0\) whose closure is contained in \(U\). Since \(B\) is semialgebraic and \(E\) is defined by a polynomial equation, \(B\setminus E\) is also semialgebraic. So, this set has only finitely many connected components \(C_1,\ldots,C_N,
\) each of which is semialgebraic. 

On each component \(C_j\), all roots of \(P_*(x,\cdot)\) are simple. Hence its real roots can be ordered as \(\varphi_{j,1}(x)<\cdots<\varphi_{j,q_j}(x)\). Each of these functions is real-analytic and semialgebraic. On a connected component \(C_j\), the function \(h\) cannot switch from one root branch to another, since distinct branches do not intersect there. Consequently, for each \(j\), there exists an index \(s_j\) such that \( h|_{C_j}=\varphi_{j,s_j}.\)

The graph of \(h\) over \(B\setminus E\) is therefore
\[
\Gamma_0
=\bigcup_{j=1}^N
\{(x,\varphi_{j,s_j}(x))\ |\ x \in C_j\},
\]
which is a finite union of semialgebraic sets and is therefore itself semialgebraic. Since \(E\) has empty interior, \(B\setminus E\) is dense in \(B\). Over \(B\), the graph is therefore given as
\[\overline{\Gamma_0}\cap(B\times\mathbb R).
\]
We now use the fact that the closure of a semialgebraic
set is semialgebraic. Since \(\Gamma_0\) is semialgebraic, so is \(\overline{\Gamma_0}\cap(B\times\mathbb R)\), proving that \(h|_B\) is smooth and semialgebraic, and therefore Nash as mentioned above.

\end{proof}

Now we are able to prove our main result of this section.

\begin{proof}[Proof of Theorem~\ref{thm:A}]
Represent a local image branch as a smooth graph over a
connected open set. Each connected regular piece is
proper Dupin, and Proposition~\ref{prop:degree}
provides irreducible polynomial relations of uniformly bounded
degree. Lemma~\ref{lem:gluing} therefore makes the entire graph
locally Nash. No restriction on the number of regular
components is needed.

These graph charts give an analytic atlas compatible
with the smooth structure, because the transition maps of these charts are
compositions of analytic graph parametrizations with
affine ambient projections. Since the charts are obtained from the smooth local immersion theorem, the immersion is analytic
in this atlas. On a connected component $M_0$ of $M$ 
choose an arbitrary point \(p\in M_0\) and a local Nash graph chart around \(p\). By the preceding argument, there exists an irreducible polynomial \(P\) such that
\[
P(x,h(x))=0.
\]
Initially, this polynomial is expressed in the Euclidean coordinates of the chosen graph chart. Transforming back to the original ambient coordinates by an invertible affine transformation, we obtain a polynomial
\[
\widetilde P\in\mathbb R[\widetilde x_1,\ldots,\widetilde x_{m+1}].
\]
Both its degree and irreducibility are preserved under this transformation, so
\[
\deg\widetilde P=\deg P\le D(m).
\]
Thus, on some nonempty open neighborhood \(W\subset M_0\), we have
\(
\widetilde P(f(q))=0\) for \( q\in W\).

Now consider the function
\(
\psi=\widetilde P\circ f:M_0\longrightarrow\mathbb R
\).
Since \(f\) is real-analytic with respect to the analytic structure constructed above and \(\widetilde P\) is a polynomial, \(\psi\) is real-analytic. Moreover, \(\psi\) vanishes on the nonempty open subset \(W\).

By the identity theorem for real-analytic functions, a real-analytic function on a connected real-analytic manifold that vanishes on a nonempty open subset must vanish identically. It follows that \(\psi\equiv0\) on \(M_0\).
Consequently, \(\widetilde P(f(q))=0\) for all \(q\in M_0\).
\end{proof}

\section{Morse--Bott regularity and tautness}\label{sec:tautness}

To prove that the distance functions are Morse--Bott, we will use the analytic criterion in Lemma~\ref{lem:MB} below. For a function $h$ on
a Riemannian manifold $(N,g)$, let $H$ be the self-adjoint
endomorphism defined by $g(HX,Y)=\Hess h(X,Y)$. An
eigenvector is called simple when its eigenvalue has
multiplicity one.

\begin{lemma}\label{lem:MB}
Let $(N^n,g)$ be a real-analytic Riemannian manifold, let
$h\colon N\to\R$ be real-analytic, and let $x_0\in N$ be
a critical point of $h$. Suppose there is a neighborhood
of $x_0$ and a constant $C$ such that every unit simple eigenvector
$e$ of $H$ at every point of this neighborhood satisfies
\begin{equation}\label{eq:MB-criterion}
 \bigl|(\nabla_e\Hess h)(e,e)\bigr|\le C|dh|_g.
\end{equation}
Then $h$ is Morse--Bott near $x_0$.
\end{lemma}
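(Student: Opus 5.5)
The plan is to show that the critical set $\Crit(h)$ near $x_0$ is a real-analytic submanifold whose tangent space at each point equals $\ker H$. Morse--Bott then follows by definition.

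\textbf{Step 1: structure of the critical set.} Since $h$ is analytic, $\Crit(h)=\{dh=0\}$ is a real-analytic set. By the \L ojasiewicz gradient inequality, $|dh|_g\ge c|h-h(x_0)|^{\theta}$ near $x_0$ for some $\theta\in(0,1)$. I would use this mainly qualitatively. Near $x_0$ the critical points all lie in the level $h=h(x_0)$, and flow lines of $-\grad h$ converge.

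\textbf{Step 2: an eigenvalue-branch argument.} I would stratify a neighborhood of $x_0$ by the eigenvalue multiplicity pattern of $H$. On the open dense set where this pattern is locally constant, the eigenvalues $\mu_i$ and spectral projections of $H$ are analytic. For a simple eigenvalue with unit eigenvector $e$, the first-variation formula gives
\[
 d\mu(e)=(\nabla_e\Hess h)(e,e).
\]
So \eqref{eq:MB-criterion} says $|e(\mu)|\le C|dh|$. For eigenvalues of multiplicity $\ge2$, no hypothesis is given. I expect them to be handled because they only enter through $\rank H$ being locally constant along $\Crit(h)$, which is the real goal.

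The key claim is that the rank of $H$ is constant on $\Crit(h)$ near $x_0$, and that $\Crit(h)$ is smooth with tangent space $\ker H$. Take a point $x\in\Crit(h)$ and a simple eigenvalue branch $\mu$ vanishing there. Consider the integral curve $\gamma$ of $e$ through $x$. Along $\gamma$, write $u=|dh|$. We have $u'\le |H e|=|\mu|$ up to curvature terms, and $|\mu'|\le Cu$. Since $u(0)=\mu(0)=0$, Gronwall gives $u\equiv\mu\equiv0$ along $\gamma$. Hence the integral curves of simple null directions stay inside $\Crit(h)$ and keep a zero eigenvalue. This shows that $\ker H$ directions integrate inside $\Crit(h)$, giving the inclusion $\ker H\subset T\Crit(h)$. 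The reverse inclusion $T_x\Crit\subset\ker H$ is automatic, since differentiating $dh=0$ along the critical set gives $H(T\Crit)=0$.

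\textbf{Step 3: passing to singular points of the spectrum.} The points where eigenvalue branches collide, and the zero eigenvalue of higher multiplicity, must be handled by continuity and analyticity. I would argue on the dense regular stratum that $\Crit(h)\cap\{\rank H=r\}$ is a manifold of dimension $n-r$. Then I would use analyticity (the curve selection lemma) together with lower semicontinuity of rank to show that the minimal rank is attained along all of $\Crit(h)$ near $x_0$. This yields that $\Crit(h)$ is an analytic submanifold of dimension $\dim\ker H$, i.e.\ Morse--Bott.

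The main obstacle is Step 3. Simple eigenvectors are undefined at collisions, and the Gronwall argument needs uniform control as one approaches them. I would try first to run the ODE argument along analytic arcs supplied by curve selection, on which eigenvalues and eigenvectors admit analytic (Rellich) parametrizations. Generic points of such an arc carry simple branches, so the estimate extends by continuity to the whole arc.
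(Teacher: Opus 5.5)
Your Step 2 is fine as far as it goes, but it only covers critical points where $0$ is a \emph{simple} eigenvalue of $H$, i.e.\ corank one. There, the first-variation identity $d\mu(e)=(\nabla_e\Hess h)(e,e)$, the bound $\bigl|\tfrac{d}{dt}|dh|_g\bigr|\le|\mu|$ and Gronwall do keep the integral curve of $e$ inside $\Crit(h)$. Your conclusion $\ker H\subset T\Crit(h)$ is therefore justified only when $\dim\ker H=1$.

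The essential case is $\dim\ker H(x_0)\ge2$, and there your argument has nothing to start from. There is no simple null eigenvector at $x_0$, and Morse--Bott fails typically because $\Crit(h)$ is too small. Examples are $h=(u_1^2+u_2^2)^2$ or the monkey saddle $u_1^3-3u_1u_2^2$, where $\Crit(h)=\{x_0\}$ lies in the singular spectral stratum. In such cases:
\begin{itemize}
\item there are no other critical points to integrate from;
\item $\Crit(h)$ does not meet the dense regular stratum;
\item constancy of $\rank H$ along $\Crit(h)$ holds vacuously but does not give $\dim\Crit(h)=\dim\ker H(x_0)$.
\end{itemize}
So the entire proof sits in your Step 3, and it requires using \eqref{eq:MB-criterion} at \emph{non-critical} points near $x_0$. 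Curve selection with Rellich branches does not achieve this by itself. The criterion controls the derivative of $\mu$ only in the direction of its own eigenvector, so along an arbitrary analytic arc it says nothing. The arc must be asymptotically tangent to a simple eigenvector, and you give no mechanism for arranging this.

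The paper supplies exactly this alignment. By the analytic splitting lemma, $h=h(x_0)+\rho(u)+\tfrac12v^{\mathsf T}Bv$ with $j^2_0\rho=0$ and $g(0)=I$. Suppose $\rho\not\equiv0$. Let $P_d$, $d\ge3$, be its lowest nonzero homogeneous part. After possibly replacing $h$ by $-h$, let $u_0$ maximize $P_d$ on the unit sphere, with $a=P_d(u_0)>0$. Euler's identity gives $D^2P_d(u_0)u_0=d(d-1)a\,u_0$. Maximality forces every eigenvalue on $u_0^\perp$ to be at most $da<d(d-1)a$, so $u_0$ is a simple, separated eigenvector.

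A perturbation argument then gives, at $z_s=(su_0,0)$, a simple unit eigenvector $e_s=(u_0+O(s),O(s^{d-1}))$ of $H$. Along it, $(\nabla_{e_s}\Hess h)(e_s,e_s)=d(d-1)(d-2)a\,s^{d-3}+O(s^{d-2})$, while $|dh|_g=O(s^{d-1})$. This contradicts \eqref{eq:MB-criterion} as $s\searrow0$, so $\rho\equiv0$ and the critical set is $\{v=0\}$, which is the Morse--Bott property. Analyticity enters through the existence of $P_d$. What your proposal lacks is this device: it produces simple eigenvectors along rays into $x_0$ on which the third derivative dominates $|dh|_g$.
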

\begin{proof}
If $H(x_0)$ is nonsingular, $x_0$ is a nondegenerate critical
point and the conclusion follows. Suppose, then, that
$r:=\dim\ker H(x_0)>0$. The analytic version of the Morse splitting lemma gives coordinates
$(u,v)\in\R^r\times\R^{n-r}$ centered at $x_0$ in which
\begin{equation}\label{eq:splitting}
 h=h(x_0)+\rho(u)+\tfrac12v^{\mathsf T}Bv
 \ \ \text{ with } \ \ j^2_0\rho=0 \ \text{ and } \ g(0)=I,
\end{equation}
where $\rho$ is analytic and $B$ is symmetric and invertible. If \(r=n\), the \(v\)-block and the corresponding coordinate changes below are omitted. 

To prove the claim, we will show that $\rho$ vanishes identically. But before, we first explain why the splitting can be chosen as above, with the
normalization $g(0)=I$. To this end, start with coordinates whose coordinate
vectors at $x_0$ form an orthonormal basis adapted to
$\ker H(x_0)$ and its orthogonal complement. 
Since the analytic function \(F(u,v):=h_v(u,v)\) satisfies $F(0,0) = 0$ and \(D_vF(0,0)=h_{vv}(0,0)=B\), the implicit
function theorem solves $F(u,v)= h_v(u,v)=0$ as $v=\chi(u)$, with
$\chi(0)=D\chi(0)=0$ and $\chi$ analytic. Writing $w:=v-\chi(u)$, we have for \(\widetilde h(u,w):=h(u,w+\chi(u))\) with \(\widetilde h_w(u,0)=0\) by Taylor's
formula with respect to $w$ 
\[
\widetilde h(u,w)
=\widetilde h(u,0)
+\frac12w^{\mathsf T}\widetilde B(u,w)w,
\]
and therefore
\[
\widetilde h(u,w)
=h(u,\chi(u))
+\frac12w^{\mathsf T}\widetilde B(u,w)w.
\]
From this, we obtain for \(\rho(u):=h(u,\chi(u))-h(0,0)\)  
\[
 \widetilde h(u,w)=h(x_0)+\rho(u)+\tfrac12w^{\mathsf T}\widetilde B(u,w)w \
 \  \text{ with } \ \widetilde B(0,0)=B.
\]
To make the matrix in the quadratic term constant, we set $S:=B^{-1}(\widetilde B-B)$. Then $S$ is analytic with $BS= \widetilde B-B$, resp. $\widetilde B = B(I+S)$, and $S(0,0) = B^{-1}(\widetilde B(0,0)-B)= 0$, since $\widetilde B(0,0) = B$. Thus, we can define
$R:=(I+S)^{1/2}$ by its convergent matrix power series near
the origin, i.e., by 
\[
\begin{aligned}
R
&=I+\frac12S-\frac18S^2+\frac1{16}S^3+\ldots\\
&=\sum_{k=0}^{\infty}\binom{1/2}{k}S^k.
\end{aligned}
\] 
Because of $S^{\mathsf T} = (\widetilde B-B)B^{-1}$ by symmetry of $B$ and $\widetilde B$, we obtain the identity $S^{\mathsf T}B=BS$, which means that $S$ is self-adjoint with respect to the bilinear form defined by $B$.
Further, repeated application of the last identity implies $(S^k)^{\mathsf T}B=BS^k$ and therefore
$R^{\mathsf T}BR=\widetilde B$. Thus the analytic change
$w\mapsto R(u,w)w$ makes the quadratic term constant.
Both coordinate changes have identity differential at zero,
so they preserve $g(0)=I$ and provide the expression claimed above:  
\[
h(u,v)=h(0,0)+\rho(u)+\frac12v^{\mathsf T}Bv,
\]
reusing \(h\) for the transformed function and \(v\) for the final coordinates.

This construction also works when $B$ is indefinite. When the
$v$ block is absent, no change is needed, as we already mentioned above.

Now, suppose that $\rho$ does not vanish identically near zero. We will construct simple eigenvectors of the Hessian near the origin along which the third covariant derivative of \(h\) violates the bound \eqref{eq:MB-criterion}.

By analyticity and $j_0^2\rho = 0$, \(\rho\) has a first nonzero homogeneous term $P_d$,
of degree $d\ge3$, i.e., \(\rho(u)=P_d(u)+O(|u|^{d+1})\) with 
\(P_d(tu)=t^dP_d(u)\) and $d\ge 3$. In particular, we have 
\[
D^k\rho(u)=D^kP_d(u)+O(|u|^{d-k+1}) \ \ \text{ for }  k\le 3.\]

Replacing $h$ by $-h$ if necessary does
not change the hypothesis, and allows us to choose a point
$u_0$ on the unit sphere in $\R^r$ at which $P_d$ attains a positive
maximum $a=P_d(u_0)>0$. Since $u_0$ maximizes $P_d$ on the unit sphere, its gradient
is orthogonal to the tangent space, i.e., 
\(
DP_d(u_0)=c\,u_0
\) for some real number $c$. Euler's identity for a homogeneous polynomial of degree $d$
states that
\(
\langle DP_d(u),u\rangle=dP_d(u),
\)
hence
\begin{equation}
DP_d(u_0)=da\,u_0.
\label{eq:gradient-at-maximum}
\end{equation}
Differentiating Euler's identity with respect to $u$ gives
\( D^2P_d(u)\,u=(d-1)DP_d(u).
\) At $u=u_0$, equation
\eqref{eq:gradient-at-maximum} therefore yields
\[
D^2P_d(u_0)u_0=d(d-1)a\,u_0.
\]

Set
\(
\mu:=d(d-1)a\) and  \(C_0:=D^2P_d(u_0).
\)
Then $C_0u_0=\mu u_0$. To show that $\mu$ is simple,
let $w\perp u_0$. Since $u_0$ maximizes $P_d$ on the
unit sphere, the second derivative test along a great
circle through $u_0$ with initial velocity $w$ gives
\[
w^{\mathsf T}D^2P_d(u_0)w\le da|w|^2.
\]
Because $C_0$ is symmetric and $u_0$ is an eigenvector,
$u_0^\perp$ is $C_0$-invariant. The preceding inequality
shows that all eigenvalues of $C_0$ on $u_0^\perp$ are
at most $da$. Since
\(
\mu=d(d-1)a>da
\) for $d \ge 3$,
the eigenvalue $\mu$ is simple and separated from
the remaining spectrum.

We now consider the points $z_s=(su_0,0)$ as
$s\searrow0$. At these points, $dh=O(s^{d-1})$ and in the chosen coordinates \eqref{eq:splitting}, the metric matrix satisfies
$g_s:=g(z_s)=I+O(s)$. 
By the Taylor expansion above and the homogeneity
of $P_d$, we have
\[
D^2\rho(su_0)=s^{d-2}C_0+O(s^{d-1}).
\]
Since the mixed second derivatives vanish and the
$v$-block of $D^2h$ is the constant matrix $B$,
the matrix of the Hessian form at $z_s$ is
\begin{equation}\label{eq:hessian-block}
K_s=
\begin{pmatrix}
s^{d-2}C_0+O(s^{d-1}) & O(s^{d-1})\\
O(s^{d-1}) & B+O(s^{d-1})
\end{pmatrix}.
\end{equation}
Here the connection terms are $O(s^{d-1})$,
since they are linear in $dh$ and the Christoffel
symbols are bounded. Because of $H_s=g_s^{-1}K_s$, the eigenvalue equation
for $H_s$ can be written as
\(
H_se=\theta e
 \Longleftrightarrow
K_se=\theta g_se
\).
Motivated by the leading term \(s^{d-2}C_0\), we put \(\theta=s^{d-2}\tau\) and seek a solution with \(\tau\to\mu\) as \(s\to0\). 
For bounded $\tau$ and sufficiently small $s$, the
lower-right block of $K_s-s^{d-2}\tau g_s$ is
invertible, since it converges to $B$.
Moreover, its off-diagonal blocks are $O(s^{d-1})$,
because those of $K_s$ are $O(s^{d-1})$ and
those of $g_s$ are $O(s)$.

This implies that an eigenvector corresponding to an
eigenvalue of the form $\theta=s^{d-2}\tau$, with
bounded $\tau$, has only a small component in the
transverse $v$-directions. To see this, set $Q:=K_s-s^{d-2}\tau g_s$ and write
the eigenvalue equation in block form as
\[
\begin{pmatrix}
Q_{uu}&Q_{uv}\\
Q_{vu}&Q_{vv}
\end{pmatrix}
\begin{pmatrix}
e_u\\e_v
\end{pmatrix}=0.
\]
Since $Q_{vv}$ is invertible, the second block
equation gives
\(
e_v=-Q_{vv}^{-1}Q_{vu}e_u.
\)
As $Q_{vv}^{-1}=O(1)$ and $Q_{vu}=O(s^{d-1})$,
we obtain
\[
|e_v|=O(s^{d-1})|e_u|.
\]
Thus the transverse component of the eigenvector
vanishes to order at least $d-1$ as $s\searrow0$,
provided $e_u$ remains bounded. Substituting this expression for $e_v$ into the
upper block equation yields
\[
\left(Q_{uu}-Q_{uv}Q_{vv}^{-1}Q_{vu}\right)e_u=0.
\]
By the block estimates above, we have
\[
Q_{uu}=s^{d-2}(C_0-\tau I+O(s))
\ \ 
\text{ and } 
\ \
Q_{uv}Q_{vv}^{-1}Q_{vu}=O(s^{2d-2}).
\]
Dividing the reduced equation by $s^{d-2}$ therefore
gives
\[
(C_0-\tau I+O(s))e_u=0.
\]
After this rescaling, the reduced equation extends
analytically to $s=0$. Indeed, for $s>0$, define
\[
M(s,\tau)=
s^{-(d-2)}
\left(Q_{uu}-Q_{uv}Q_{vv}^{-1}Q_{vu}\right).
\]
The apparent singularity at $s=0$ is removable,
since analyticity and the preceding estimates imply
that $Q_{uu}$ is divisible by $s^{d-2}$,
while $Q_{uv}Q_{vv}^{-1}Q_{vu}$ vanishes to order
at least $2d-2$. Hence $M$ extends analytically
to $s=0$ and satisfies
\[
M(0,\tau)=C_0-\tau I.
\]
The eigenvalue $\mu$ is a simple eigenvalue of $C_0$. Thus
we can apply the analytic implicit function theorem to 
$F(s,\tau)=\det M(s,\tau)$ at $(0,\mu)$, because \(F(0,\tau)=\det(C_0-\tau I)\) with $F(0,\mu) = 0$ and $\partial_\tau F(0,\mu)\neq0$ in this case.
This yields an
analytic function $\tau(s)=\mu+O(s)$ satisfying
$F(s,\tau(s))=0$.

The kernel of $M(s,\tau(s))$ is one-dimensional
and admits an analytic choice of vectors satisfying
$e_u(s)=u_0+O(s)$. The previously established relation
\[
e_v=-Q_{vv}^{-1}Q_{vu}e_u
\]
then gives $e_v=O(s^{d-1})$. After normalization
with respect to $g_s$, we obtain a simple eigenvalue
of $H_s$ and a corresponding unit eigenvector satisfying
\begin{equation}\label{eq:eigen-asymptotics}
\theta_s=\mu s^{d-2}+O(s^{d-1}) \ \ \text{ and } \ \
e_s=(u_0+O(s),O(s^{d-1})).
\end{equation}
If $r=n$, the same argument applies directly
without eliminating a transverse component.

We next estimate $(\nabla_{e_s}\Hess h)(e_s,e_s)$.
Although the full Hessian need not be small because of the
$B$ block, its contraction with $e_s$ is controlled by the
small eigenvalue $\theta_s$.

More precisely, the coordinate formula for the covariant
derivative shows that every term in
\[
(\nabla_{e_s}\Hess h)(e_s,e_s)
-D^3h(z_s)[e_s,e_s,e_s]
\]
contains either a first derivative of $h$ or a contraction
of $D^2h(e_s,\cdot)$ with a bounded vector.
Here the Christoffel symbols and their first derivatives
are bounded near the origin. Since
\(
\Hess h(e_s,\cdot)=\theta_sg_s(e_s,\cdot)\) and
\(D^2h=\Hess h+\Gamma\,dh,
\)
the difference is
$O(|\theta_s|+|dh|_g)=O(s^{d-2})$.
The quadratic $v$ term in \eqref{eq:splitting} has zero
third coordinate derivative. Moreover,
\eqref{eq:eigen-asymptotics} gives
$(e_s)_u=u_0+O(s)$.
Using the homogeneity of $P_d$ and the Taylor expansion
of $\rho$, we obtain
\begin{align*}
(\nabla_{e_s}\Hess h)(e_s,e_s)
&=D^3\rho(su_0)
  [(e_s)_u,(e_s)_u,(e_s)_u]+O(s^{d-2})\\
&=d(d-1)(d-2)a\,s^{d-3}+O(s^{d-2}).
\end{align*}
The leading coefficient is strictly positive.
On the other hand, \eqref{eq:MB-criterion} applies to
$e_s$ and bounds the absolute value of this expression
by $C|dh|_g=O(s^{d-1})$.
These estimates are incompatible as $s\searrow0$.

We conclude that $\rho\equiv0$ near the origin.
In the splitting coordinates \eqref{eq:splitting},
the critical set is therefore exactly $\{(u,v) \ | \ v=0\}$,
and its tangent space agrees with the kernel of
the Hessian at each of its points.
This is the Morse--Bott property.

\end{proof}

The preceding argument also applies to the weaker estimate
in which the right-hand side of \eqref{eq:MB-criterion}
is replaced by $C(|dh|_g+|\lambda|)$, where $\lambda$ is
the corresponding Hessian eigenvalue. In fact, this estimate
is also necessary and therefore provides a local
characterization of the Morse--Bott property, which may
be of independent interest.

\begin{theorem}\label{cor:MB-characterization}
Let $(N,g)$ be a real-analytic Riemannian manifold,
let $h:N\to\mathbb R$ be real-analytic, and let
$x_0$ be a critical point of $h$.
Denote by $H$ the self-adjoint endomorphism defined by
\[
g(HX,Y)=\operatorname{Hess}h(X,Y).
\]
Then $h$ is Morse--Bott near $x_0$ if and only if
there exist a neighborhood $U$ of $x_0$ and a constant
$C>0$ such that every unit eigenvector $e$ of $H$
corresponding to a simple eigenvalue $\lambda$,
at every point of $U$, satisfies
\begin{equation}\label{eq:morse-bott-characterization}
\left|(\nabla_e\operatorname{Hess}h)(e,e)\right|
\le C\bigl(|dh|_g+|\lambda|\bigr).
\end{equation}
\end{theorem}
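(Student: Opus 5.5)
For sufficiency, I would rerun the proof of Lemma~\ref{lem:MB} essentially unchanged. For necessity, I would use the normal form of a Morse--Bott function and redo the same coordinate bookkeeping for the covariant third derivative.

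\emph{Sufficiency.} Assume \eqref{eq:morse-bott-characterization} holds on a neighborhood $U$ of $x_0$, and suppose $h$ is not Morse--Bott near $x_0$.
\begin{enumerate}
\item Take the analytic splitting coordinates \eqref{eq:splitting}. Then $\rho\not\equiv0$, with first nonzero homogeneous term $P_d$, $d\ge3$.
\item Choose $u_0$ as a positive maximum of $P_d$ on the unit sphere, replacing $h$ by $-h$ if necessary. This replacement preserves \eqref{eq:morse-bott-characterization}.
\item Construct the analytic branch of simple eigenvalues $\theta_s=\mu s^{d-2}+O(s^{d-1})$ and unit eigenvectors $e_s$ of \eqref{eq:eigen-asymptotics} at $z_s=(su_0,0)$, exactly as in that proof.
\item The computation there gives $(\nabla_{e_s}\Hess h)(e_s,e_s)=d(d-1)(d-2)a\,s^{d-3}+O(s^{d-2})$.
\item The hypothesis now only bounds this quantity by $C(|dh|_g+|\theta_s|)=O(s^{d-1})+O(s^{d-2})=O(s^{d-2})$. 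This bound still has strictly higher order than $s^{d-3}$, so letting $s\searrow0$ gives a contradiction.
\item Hence $\rho\equiv0$, and the rest of the proof of Lemma~\ref{lem:MB} yields the Morse--Bott property.
\end{enumerate}
The only thing to verify is that the weakening of the right-hand side costs one power of $s$ and no more. This holds because $\theta_s$ is of order $s^{d-2}$ and $d-2>d-3$.

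\emph{Necessity.} Assume $h$ is Morse--Bott near $x_0$. By the Morse--Bott lemma there are coordinates $(u,v)$ centered at $x_0$, at least $C^3$ and analytic in the analytic setting, on a relatively compact neighborhood $U$, in which
\[
h=h(x_0)+\tfrac12v^{\mathsf T}Bv
\]
with $B$ constant, symmetric and invertible. In these coordinates $D^3h\equiv0$. Write the covariant third derivative in coordinates as
\[
(\nabla\Hess h)_{ijk}=\partial_k\bigl(\partial_{ij}h-\Gamma_{ij}^l\partial_lh\bigr)-\Gamma_{ki}^l\Hess h_{lj}-\Gamma_{kj}^l\Hess h_{il}.
\]
Evaluate on a unit eigenvector $e$ with eigenvalue $\lambda$, and use two facts:
\begin{itemize}
\item $\Hess h(e,\cdot)=\lambda g(e,\cdot)$;
\item $D^2h(\cdot,\cdot)=\Hess h(\cdot,\cdot)+\Gamma\,dh$.
\end{itemize}
Then every remaining term is of one of three kinds: a third coordinate derivative of $h$, which vanishes; a product of $dh$ with $\Gamma$ or $\partial\Gamma$; or a contraction of $\Hess h(e,\cdot)$ with a vector built from $\Gamma$ and $e$. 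After shrinking $U$, the quantities $\Gamma$, $\partial\Gamma$, $g$ and $g^{-1}$ are uniformly bounded on $U$, and the coordinate norm of a $g$-unit vector $e$ is bounded. This gives $|(\nabla_e\Hess h)(e,e)|\le C(|dh|_g+|\lambda|)$. Simplicity of $\lambda$ is not even needed for this direction.

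\emph{Main obstacle.} I expect the main delicate point to be the necessity direction. One has to ensure that the Morse--Bott normal form exists with enough regularity, at least $C^3$ coordinates, so that the Christoffel symbols and their first derivatives are bounded. One also has to bookkeep carefully the mixed term $\Gamma_{ki}^l\Hess h_{lj}e^ie^je^k$: the index $l$ is contracted with $\Gamma$ rather than with $e$, so it has to be rewritten through $\Hess h(\cdot,e)=\lambda g(\cdot,e)$ using the symmetry of the Hessian.
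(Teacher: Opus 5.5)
Your proposal is correct and follows the paper's proof essentially step for step. For sufficiency, you rerun the argument of Lemma~\ref{lem:MB} and note that the added term $C|\lambda|$ along the constructed eigenvalues is only $O(s^{d-2})$, so it is still dominated by the leading $s^{d-3}$ term. For necessity, you use the Morse--Bott normal form with $D^3h=0$ and bound the remaining Christoffel terms through $\Hess h(e,\cdot)=\lambda g(e,\cdot)$ and $D^2h=\Hess h+\Gamma\,dh$; your explicit treatment of the term $\Gamma^l_{ki}(\Hess h)_{lj}e^ie^je^k$ spells out what the paper summarizes in one line.
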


\begin{proof}

Suppose first that \eqref{eq:morse-bott-characterization}
holds. The argument of Lemma \ref{lem:MB} still applies.
If the analytic remainder $\rho$ were nonzero, it would
produce simple Hessian eigenvalues
$\lambda_s=O(s^{d-2})$ and corresponding unit
eigenvectors $e_s$ such that
\[
|dh(z_s)|_g=O(s^{d-1})\ \ \text{ and } \ \ 
(\nabla_{e_s}\operatorname{Hess}h)(e_s,e_s)
=c\,s^{d-3}+O(s^{d-2}),
\]
with $c>0$. Thus the right-hand side of
\eqref{eq:morse-bott-characterization} is
$O(s^{d-2})$, contradicting its left-hand side.
Hence $\rho\equiv0$, and $h$ is Morse--Bott near $x_0$.

Conversely, suppose that $h$ is Morse--Bott near $x_0$.
By the Morse--Bott lemma, there are smooth local
coordinates $(u,v)$ centered at $x_0$ in which
\[
h=h(x_0)+\frac12v^{\mathsf T}Bv,
\]
where $B$ is constant, symmetric, and invertible.
In particular, $D^3h=0$ in these coordinates.

Writing the covariant Hessian in coordinates as
\[
(\operatorname{Hess}h)_{ij}
=\partial_i\partial_jh
-\Gamma^k_{ij}\partial_kh,
\]
and differentiating covariantly, we see that
$(\nabla_e\operatorname{Hess}h)(e,e)$
is a sum of terms involving either $dh$ or
contractions of $D^2h(e,\cdot)$ with the Christoffel
symbols. The Christoffel symbols and their first
derivatives are bounded on a sufficiently small
neighborhood of $x_0$.

If $e$ is a unit eigenvector of $H$ with eigenvalue
$\lambda$, then
\(
\operatorname{Hess}h(e,\cdot)
=\lambda g(e,\cdot).
\)
Since
\(
D^2h=\operatorname{Hess}h+\Gamma\,dh,
\)
it follows that
\[
|D^2h(e,\cdot)|
\le C_1\bigl(|\lambda|+|dh|_g\bigr)
\]
for some constant $C_1$, after shrinking the
neighborhood if necessary.

Therefore
\[
\left|(\nabla_e\operatorname{Hess}h)(e,e)\right|
\le C\bigl(|dh|_g+|\lambda|\bigr)
\]
for a uniform constant $C>0$.

In fact, this estimate holds for every unit
eigenvector of $H$, without requiring its eigenvalue
to be simple.
\end{proof}

Notice that the necessity of
\eqref{eq:morse-bott-characterization} requires only
smoothness, not analyticity.
We now return to Dupin hypersurfaces, which in fact
satisfy the stronger criterion \eqref{eq:MB-criterion}
of Lemma~\ref{lem:MB}.

\begin{proposition}\label{prop:distances}
For every immersion satisfying the hypotheses of
Theorem~\ref{thm:A}, all squared distance functions $L_q$ and
all nonzero linear height functions are Morse--Bott.
\end{proposition}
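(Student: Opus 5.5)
By Theorem~\ref{thm:A} we may work in the compatible analytic structure, in which $f$ and the induced metric $g$ are real-analytic. Hence $L_q$ and the height functions $\ell_\xi=\langle f,\xi\rangle$ are analytic, and Lemma~\ref{lem:MB} is available. It suffices to verify the criterion \eqref{eq:MB-criterion} near each critical point $x_0$, with $N=M$ and $h=L_q$ or $h=\ell_\xi$.

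We first compute the Hessians in the local unit normal frame. Write $q-f=df(t)+\phi\nu$, where $t$ is tangent and $\phi=\langle q-f,\nu\rangle$. Then $\grad L_q=-t$ and
\[
\Hess L_q(X,Y)=g\bigl((I-\phi A)X,Y\bigr).
\]
For the height function, $\grad\ell_\xi=\xi^T$ and $\Hess\ell_\xi(X,Y)=\langle\xi,\nu\rangle\,g(AX,Y)$. Consequently $H=I-\phi A$, respectively $H=\psi A$ with $\psi=\langle\xi,\nu\rangle$. In both cases $H$ is a function-multiple of $A$ plus a multiple of $I$, so its eigenvectors are eigenvectors of $A$.

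The key step concerns a unit simple eigenvector $e$ of $H$. Multiplicities of $H$ and $A$ agree wherever $\phi\neq0$ (respectively $\psi\ne0$). Near $x_0$ we may assume $\phi(x_0)\ne0$ whenever $H(x_0)$ is singular, since otherwise $x_0$ is nondegenerate and there is nothing to prove. Thus $e$ spans a simple principal space $E_\lambda$, and a small neighborhood lies in a region where $\lambda$ is locally a smooth simple eigenvalue.

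We now differentiate, writing $\lambda=g(Ae,e)$. For $L_q$,
\[
(\nabla_e\Hess L_q)(e,e)=-e(\phi)\lambda-\phi\,g((\nabla_eA)e,e)=-e(\phi)\lambda-\phi\,e(\lambda),
\]
and $e(\phi)=\langle q-f,-df(Ae)\rangle=-\lambda\, g(t,e)$ is bounded by $C|dh|$. By Codazzi, $g((\nabla_eA)e,e)=e(\lambda)$ for an eigenvector field. This is the point where \eqref{eq:D} enters.

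The main obstacle is that $e$ may lie at a point where $A$ has a simple eigenvalue $\lambda$ but the point is not in $\G$. Simplicity alone does not place the point in $\G$, because other eigenvalues may collide there. We handle this as follows. A simple eigenvalue of $A$ is locally a smooth (analytic) function, with eigenline field $E_\lambda$, on the open set where it stays simple. The function $e(\lambda)$ is continuous there and vanishes on the dense subset $\G$ by \eqref{eq:D}. Hence $e(\lambda)\equiv0$ on that whole open set. So $(\nabla_e\Hess L_q)(e,e)=\lambda^2 g(t,e)$, which is bounded by $C|dL_q|_g$ with $C$ a local bound on $|A|^2$. The height function case is analogous: the term $\psi\,e(\lambda)$ vanishes, and $e(\psi)=-\lambda\langle\xi,df(e)\rangle$ is $O(|d\ell_\xi|)$. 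The bound is uniform because $|A|$ and $|q-f|$ are locally bounded. Lemma~\ref{lem:MB} then gives the Morse--Bott property at every critical point, and hence globally.
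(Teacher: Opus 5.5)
Your proposal is correct and follows the paper's proof essentially step for step. The shared steps are the Hessian formulas $H=I-\phi A$ and $H=\psi A$, the identity $(\nabla_e\Hess L_q)(e,e)=-\lambda^2\,dL_q(e)-\phi\,e(\lambda)$ with the analogous one for height functions, the extension of $e(\lambda)=0$ from the dense set $\G$ to every point where $\lambda$ is simple by continuity, and the application of Lemma~\ref{lem:MB}, with the case $\phi(x_0)=0$ disposed of because $H=I$ there. One small correction: $g((\nabla_eA)e,e)=e(\lambda)$ does not need Codazzi; it comes from differentiating $g(Ae,e)=\lambda$ using $|e|=1$ and the self-adjointness of $A$.
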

\begin{proof}
By Theorem~\ref{thm:A}, the image branches, induced metric,
and local shape operator are analytic. For $q \in \R^{m+1}$ and $h=L_q$, define $t=\langle q-f,\nu\rangle$ to be the normal component of the difference vector $q-f$ on the chosen neighborhood. Direct differentiation then gives
\begin{equation}\label{eq:distance-identities}
 H=I-tA\ \ \text{ and } \  dt(X)=dh(AX).
\end{equation}
If $e$ is a local unit principal vector for a simple
principal curvature $\lambda$, then
\begin{equation}\label{eq:distance-third}
 (\nabla_e\Hess h)(e,e) = e(1-t\lambda)-2\operatorname{Hess}h(\nabla_e e,e)
 =e(1-t\lambda)\stackrel{(\ref{eq:distance-identities})}{=}-\lambda^2dh(e)-t\,e(\lambda).
\end{equation}
The terms involving $\nabla_e e$ vanish since $e$ is
a unit eigenvector. Condition \eqref{eq:D} gives
$e(\lambda)=0$ on $\G$. But wherever $\lambda$ is simple,
$\lambda$ and a local unit eigenvector are smooth, so this
identity extends from $\G$ by continuity. 

At a critical point $p$ with $t=0$, i.e., $f(p) =q$, the Hessian is the
identity and the claim is true. Otherwise we can restrict ourselves to a small neighborhood in which
$t\ne0$ and $A$ is bounded. There $H$ and $A$ have
the same eigenspaces and multiplicities, and
\eqref{eq:distance-third} gives the criterion
\eqref{eq:MB-criterion}, since $\lambda$ is locally bounded. Lemma~\ref{lem:MB} then proves
the assertion for $L_q$.

For $h_a=\langle f,a\rangle$ with $a\ne0$, we put
$t=\langle\nu,a\rangle$ and obtain $H=tA$ and $dt(X)=-dh_a(AX)$ as well as
\[
 (\nabla_e\Hess h_a)(e,e)=-\lambda^2dh_a(e)+t\,e(\lambda)
\]
as the analogous identities. At a critical point, $t=\pm|a|\ne0$, so the same
argument applies.
\end{proof}

Proposition~\ref{prop:distances} also gives us the missing curvature surfaces at
singular points. In a local trivialization of the normal bundle the
normal exponential map is
\[
 \eta(x,t)=f(x)+t\nu(x)\ \ \text{ with } \ \
 d\eta_{(x,t)}(X,b)=df((I-tA)X)+b\nu.
\]
The fiber over $q$ identifies with $\Crit L_q$ via
$t=\langle q-f,\nu\rangle$. On this critical set $dt=0$
by \eqref{eq:distance-identities}, and Morse--Bott
regularity gives
\[
 T_{(x,t)}\eta^{-1}(q)
 =\ker d\eta_{(x,t)}
 =\{(X,0)\ | \ X\in\ker(I-tA)\}.
\]
For $t\ne0$, bundle projection to $M$ gives a curvature surface
tangent to the full principal space of curvature
$1/t$, which is constant along the surface. Conversely,
a curvature surface of constant nonzero curvature
$\lambda$ lifts to an integral manifold of
$\ker d\eta$ by $x\mapsto(x,1/\lambda)$.
This establishes the fiber integrability condition in
\cite[Theorem~2.9]{Wiesendorf}, including at singular
focal vectors, and explains its relation to curvature surfaces. For a zero principal curvature at $p$,
the critical manifold of $\langle f,\nu(p)\rangle$
is tangent to $\ker A$ and has locally constant normal.

With this identification, the tautness claim for compact embedded Dupin hypersurfaces follows directly from Proposition \ref{prop:distances} and the fiber integrability condition in \cite{Wiesendorf}.

\begin{proposition}\label{thm:B}
Let $f:M^m\to\R^{m+1}$ be a $C^\infty$ embedding of a compact
connected manifold satisfying \eqref{eq:D}. Then $f$ is
$\Z_2$-taut.
\end{proposition}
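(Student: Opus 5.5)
The plan is to reduce $\Z_2$-tautness to the Morse--Bott property of all endpoint energy functionals and then invoke the characterization \cite[Theorem~2.9]{Wiesendorf}. Since $f$ is an embedding, we identify $M$ with the compact embedded hypersurface $f(M)\subset\R^{m+1}$. This is a closed embedded submanifold of the complete manifold $\R^{m+1}$, so the cited theorem applies. By Proposition~\ref{prop:distances}, every squared distance function $L_q$ is Morse--Bott. By the discussion following that proposition, every fiber $\eta^{-1}(q)$ of the normal exponential map is a smooth submanifold, possibly with components of different dimensions. Its tangent space is $\ker d\eta$ at each point, including at singular focal vectors.

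First, I would transfer the Morse--Bott property from $L_q$ to the endpoint energy functional
\[
E_q(c)=\int_0^1|\dot c|^2
\]
on the space of paths from $M$ to $q$, or on a finite-dimensional approximation of it. In Euclidean space, write a path as the segment from $c(0)$ to $q$ plus a path vanishing at both ends. Then $E_q$ splits as $2L_q(c(0))$ plus a positive definite quadratic form in the remaining variables. The critical points of $E_q$ are therefore exactly the straight segments from critical points of $L_q$. The critical manifolds correspond, and so do the Hessian kernels. Hence $E_q$ is Morse--Bott exactly when $L_q$ is.

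Next, I would verify the fiber integrability hypothesis of \cite[Theorem~2.9]{Wiesendorf}: the kernel distribution of $d\eta$ should be integrable, with local fibers smooth and tangent to it. This is the identity $T_{(x,t)}\eta^{-1}(q)=\ker d\eta_{(x,t)}$ just established, together with the identification of $\eta^{-1}(q)$ with $\Crit L_q$ via $t=\langle q-f,\nu\rangle$. The Morse--Bott property of all $E_q$ then gives $\Z_2$-tautness by the equivalence in \cite[Theorem~2.9]{Wiesendorf}.

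I expect the main obstacle to be the bookkeeping in matching hypotheses. The first point is that Proposition~\ref{prop:distances} was proved locally on image branches, so the Morse--Bott property must hold globally near each compact critical set. This is local in nature, so it follows from the local statement. The second point is that \cite{Wiesendorf} requires Morse--Bott behaviour for every $q$, including focal points and $q\in f(M)$. The case $q\in f(M)$ is covered by the $t=0$ case of Proposition~\ref{prop:distances}, where the Hessian is the identity. Finally, one must check that perfection over $\Z_2$ of the Morse functions among the $L_q$ is the tautness notion used here; this is the definition given in the introduction.
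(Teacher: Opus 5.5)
Your proposal is correct and essentially follows the paper's proof. The paper identifies the path space with $M\times H^1_0([0,1],\R^{m+1})$ via $(x,c)\mapsto(1-t)f(x)+tq+c(t)$, obtains the same splitting $E_q=L_q+\tfrac12\int_0^1|c'|^2$, transfers the Morse--Bott property from Proposition~\ref{prop:distances}, and applies Theorem~2.9 of the author's earlier work; your fiber-integrability check is recorded in the discussion just before the proposition. On the last bookkeeping point you leave open, the paper settles it with your splitting: it preserves critical indices and the Hilbert space factor is contractible, so perfection of the $E_q$ is the same as perfection of the Morse functions $L_q$.
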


\begin{proof}
We set $L:=f(M)$. For $q\in\R^{m+1}$, the $H^1$-path
space of absolutely continuous paths with square-integrable derivative from $L$ to $q$ is identified with
$M\times H^1_0([0,1],\R^{m+1})$, where $H^1_0([0,1],\R^{m+1})$ denotes the space of paths $c\in H^1([0,1],\R^{m+1})$ with $c(0)=c(1)=0$, by
\[
 (x,c)\longmapsto\bigl(t\mapsto(1-t)f(x)+tq+c(t)\bigr).
\]
The energy functional on this space becomes
\begin{equation}\label{eq:energy-splitting}
 E_q(x,c)=L_q(x)+\tfrac12\int_0^1|c'(t)|^2\,dt,
\end{equation}
since the cross term is zero due to the boundary condition. The second summand is
a nondegenerate positive quadratic form. Thus
Proposition~\ref{prop:distances} makes every $E_q$
Morse--Bott. As $L$ is closed and embedded,
\cite[Theorem~2.9]{Wiesendorf} gives $\Z_2$-tautness.
The splitting preserves critical indices, and its
Hilbert space factor is contractible. Thus every Morse distance
function on $M$ is perfect.
\end{proof}

Note that the local Morse--Bott conclusion required no embeddedness. But the global step above uses the theorem for a closed embedded
submanifold \cite[Theorem~2.9]{Wiesendorf} and is not an assertion of tautness for an arbitrary
compact Dupin immersion. The same proof applies to a proper
embedding with tautness understood through homology of sublevel
sets.

\subsection{Passage to arbitrary codimension}
\label{sec:tubes}

\begin{proof}[Proof of Theorem~\ref{thm:general}]
We first check that a small tube satisfies the hypersurface
condition \eqref{eq:D}. This is the usual tube construction
for Dupin submanifolds \cite{Pinkall,CCJpre}, applied here only
on the regular locus. Consider a relatively compact
neighborhood $U\subset M$ and choose $\varepsilon>0$ so small
that $I-\varepsilon A_\xi$ is invertible for every unit normal
$\xi$ over $U$. Then
\[
 T_\varepsilon:\nu^1M|_U\longrightarrow\R^n,
 \qquad T_\varepsilon(p,\xi)=f(p)+\varepsilon\xi
\]
defines a smooth map on the unit normal bundle. Write a tangent
vector to its domain as $X^h+V$, where
$V\in\nu_pM\cap\xi^\perp$ is vertical, and put
$\nu_T(p,\xi)=\xi$. The Weingarten formula gives
\[
 \begin{aligned}
 dT_\varepsilon(X^h+V)
   &=df((I-\varepsilon A_\xi)X)+\varepsilon V,\\
 d\nu_T(X^h+V)
   &=-df(A_\xi X)+V.
 \end{aligned}
\]
The two summands in the first line are orthogonal. Since
$I-\varepsilon A_\xi$ is invertible, $dT_\varepsilon$ is
injective. Its image is perpendicular to $\xi$, so
$T_\varepsilon$ is a hypersurface immersion with unit normal
$\nu_T$.

Let $A_T$ denote its shape operator, characterized by
$dT_\varepsilon(A_TZ)=-d\nu_T(Z)$. If
$A_\xi X=\lambda X$, the two differential formulas show that
$X^h$ is an eigenvector of $A_T$ and the same holds for every
vertical vector $V$. Comparing the corresponding factors
gives the principal curvatures
\begin{equation}\label{eq:general-tube-curvatures}
 \mu_\lambda=\frac{\lambda}{1-\varepsilon\lambda}\ \
 \text{ and } \ \ \mu_{\mathrm{vert}}=-\frac1\varepsilon,
\end{equation}
where $\mu_{\mathrm{vert}}$ occurs only if $n-m\ge2$, with
multiplicity $n-m-1$. The eigenspace of $\mu_\lambda$ is the
horizontal lift of $\ker(A_\xi-\lambda I)$, and that of
$\mu_{\mathrm{vert}}$ is the vertical tangent space.
The map $\lambda\mapsto\mu_\lambda$ is injective
and never takes the value $\mu_{\mathrm{vert}}$.
Thus the number of distinct principal curvatures is unchanged
in codimension one and increases by exactly one in higher
codimension.
Consequently the regular
locus of the tube, viewed in its parameter space \(\nu^1M|_U\), is exactly the restriction of $\G_\nu$.
On this locus, for $X\in\ker(A_\xi-\lambda I)$,
\[
 d\mu_\lambda(X^h)
 =\frac{d\lambda(X^h)}{(1-\varepsilon\lambda)^2}=0,
\]
while $\mu_{\mathrm{vert}}$ is constant.
Hence \eqref{eq:D} holds on the tube.

Theorem~\ref{thm:A} makes every local image branch of
$T_\varepsilon$ Nash. Its unit normal $\nu_T$ is Nash as well,
so the map
\[
 z\longmapsto z-\varepsilon\nu_T(z)
\]
is Nash. Its composition with $T_\varepsilon$ is $f\circ\pi$,
where $\pi:\nu M|_U\to U$ is the bundle projection,
restricted to $\nu^1M|_U$ in this composition.
Since $\pi$
is a submersion and $f$ is an immersion, this composition
has constant rank $m$. On each local image branch,
$T_\varepsilon$ is a local diffeomorphism, so the Nash map
above also has constant rank $m$. Its local image is precisely
a local image branch of $f$.
The Nash constant rank theorem therefore shows that every local
image branch of $f$ is Nash. These image charts also give the
claimed compatible analytic structure on $M$.

To transfer the Morse--Bott conclusion, denote the normal
endpoint map of $M$ by
\[
 \eta_M(p,v)=f(p)+v.
\]
For the tube its counterpart is
\[
 \eta_T(p,\xi,t)=f(p)+(\varepsilon+t)\xi.
\]
On $\varepsilon+t>0$, the map
$\Psi(p,\xi,t)=(p,(\varepsilon+t)\xi)$ is a diffeomorphism
onto the punctured normal bundle over $U$, and
\(
 \eta_T=\eta_M\circ\Psi
\).
Consequently, $\Psi$ identifies the corresponding local
fibers, and differentiation gives
\(
 d\Psi\bigl(\ker d\eta_T\bigr)=\ker d\eta_M\).
Thus both smoothness of the local fibers and equality of
their tangent spaces with the full differential kernels
are preserved under this change of variables.
By Proposition~\ref{prop:distances} and the discussion following
it, the local fibers of $\eta_T$ are smooth and tangent
to the full kernels of its differential. The same is therefore
true for $\eta_M$ away from the zero section. On the zero
section $d\eta_M$ is invertible.

For a fixed $q\in\R^n$, projection to $M$ identifies
$\eta_M^{-1}(q)$ with $\Crit L_q$, since necessarily
$v=q-f(p)$. The endpoint differential formula recalled
in the introduction shows that $\ker d\eta_M$ is horizontal
and projects isomorphically onto $\ker(I-A_v)$.
The projection therefore restricts locally to an embedding
of each smooth fiber, so $\Crit L_q$ is locally a submanifold.
At a critical point $p$, the Hessian endomorphism of $L_q$
is $I-A_v$, and hence
\[
 T_p\Crit L_q
 =d\pi_{(p,v)}\bigl(\ker d\eta_{M,(p,v)}\bigr)
 =\ker\Hess_p L_q.
\]
This proves (1), including at focal vectors where the
multiplicities change.

If $M$ is compact and embedded, the energy splitting
\eqref{eq:energy-splitting}, now with values in $\R^n$, and
\cite[Theorem~2.9]{Wiesendorf} prove (2).
\end{proof}

\begin{proof}[Proof of Corollary~\ref{cor:characterization}]
The forward implication follows from Theorem~\ref{thm:general}.
For the converse, suppose that every $L_q$ is Morse--Bott.
Fix $(p,\xi)\in\G_\nu$, a local eigenvalue function $\lambda$
with $\lambda(p,\xi)\ne0$, and a unit vector
$e\in\ker(A_\xi-\lambda(p,\xi)I)$. Put
$t=\lambda(p,\xi)^{-1}$ and $q=f(p)+t\xi$. Then $p$ is a
critical point of $L_q$, and its Hessian endomorphism is
$I-tA_\xi$. Thus $e\in\ker\Hess_pL_q$.

The Morse--Bott condition provides a critical submanifold
through $p$ tangent to this kernel. Differentiating along a
curve in this submanifold with initial tangent $e$ gives
$ (\nabla_e\Hess L_q)_p(e,e)=0$, where $\nabla$ is the
Levi-Civita connection of the induced metric. Write
$v=(q-f)^\perp$ for the normal component of $q-f$.
At $p$ one has $v=t\xi$ and $\nabla_e^\perp v=0$.
Differentiating the distance Hessian formula $H=I-A_v$,
with $\xi$ extended normally parallel along a curve tangent
to $e$, therefore yields
\[
 0=(\nabla_e\Hess L_q)_p(e,e)
   =-t\,d\lambda_{(p,\xi)}(e^h).
\]
Hence \eqref{eq:Dnu} holds wherever $\lambda\ne0$.
It also holds on the interior of the zero set of $\lambda$,
and continuity gives it at the remaining points.

In the compact embedded case, tautness implies the
Morse--Bott property by \cite{Ozawa}, while
Theorem~\ref{thm:general} gives the converse.
\end{proof}

\section{Consequences of principal multiplicities}\label{sec:multiplicity}

On an open set where a principal curvature $\lambda$ has constant
multiplicity at least two, one has $d\lambda|_{E_\lambda}=0$.
To see this, choose local orthonormal fields $X,Y\in E_\lambda$. The
Codazzi equation and self-adjointness then give
\[
 X(\lambda)
 =\langle(\nabla_XA)Y,Y\rangle
 =\langle(\nabla_YA)X,Y\rangle=0,
\]
because
$(\nabla_YA)X=Y(\lambda)X+(\lambda I-A)\nabla_YX$. The last equality follows from $X\perp Y$ and
$(\lambda I-A)Y=0$. Every unit vector in $E_\lambda$
can be used as $X$ at the chosen point.

This standard consequence of Codazzi is recalled, for example, in
\cite[Section~3, p.~20]{Cecil}. Thus the only nonautomatic part of
\eqref{eq:D} is its assertion for simple principal curvatures on
the regular locus.

\begin{proof}[Proof of Corollary~\ref{cor:analytic}]
As discussed above, the Codazzi equation gives \eqref{eq:D} on $\G$,
so Theorem~\ref{thm:A} applies.
\end{proof}

A compact connected taut immersion into Euclidean space is an embedding \cite[Theorem 2.5]{TT}. To apply Proposition~\ref{thm:B} to derive Corollary~\ref{cor:taut}, we need embeddedness before tautness has been established. The following proposition derives it directly from the multiplicity assumption. Its proof uses the same uniqueness-of-minima argument as Terng and Thorbergsson, with the absence of index-one critical points providing the required uniqueness.

\begin{proposition}\label{prop:embedding}
A compact connected immersed hypersurface in Euclidean space with
no simple principal curvature on its regular locus is embedded.
\end{proposition}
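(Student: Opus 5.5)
We follow the uniqueness-of-minima argument of Terng--Thorbergsson. Let $f:M^m\to\R^{m+1}$ be a compact connected immersion with no simple principal curvature on $\G$. Then $m\ge2$, since for $m=1$ the single principal curvature is simple. By Corollary~\ref{cor:analytic} and Proposition~\ref{prop:distances}, every squared distance function $L_q$ is Morse--Bott. We argue by contradiction. Suppose $f(x_1)=f(x_2)=q_0$ with $x_1\ne x_2$. Then $L_{q_0}$ attains its minimum value $0$ at two distinct points. Since $\Hess L_{q_0}=I$ wherever $f=q_0$, these are isolated nondegenerate minima.

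\textbf{Step 1: no index-one critical manifolds.} We show that no critical manifold of any $L_q$ has Morse--Bott index one. At a critical point $p$ with $t=\langle q-f(p),\nu\rangle\neq0$, we have $H=I-tA$, so the index is the total multiplicity of the principal curvatures $\lambda$ with $t\lambda>1$. We claim every principal curvature at every point of $M$ has multiplicity at least two. On $\G$ this is the hypothesis. At a point of $M\setminus\G$, the multiplicity of an eigenvalue is a limit of sums of multiplicities of nearby eigenvalues on the dense set $\G$, and each summand is at least two. Hence each multiplicity is at least two, and so every index is even. In particular it is never one. The same holds for the index of the nullity-weighted Morse--Bott data, since the negative eigenspace is a sum of full principal spaces.

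\textbf{Step 2: mountain pass.} We perturb $q_0$ to a generic nearby $q$ so that $L_q$ is Morse. By Sard's theorem this excludes focal points, and $L_q$ is then Morse with isolated critical points, all of even index by Step 1. The two nondegenerate minima persist as two distinct local minima $y_1,y_2$ of $L_q$. Since $M$ is compact and connected, a mountain-pass argument applies. Take the minimal level $c$ such that $y_1,y_2$ lie in the same component of $\{L_q\le c\}$. Morse theory at level $c$ then shows that passing critical points of index $\ge2$ cannot connect two components, because attaching a cell of dimension $\ge2$ does not change $\pi_0$. Only index-one points can connect components, and there are none. Hence $\{L_q\le c\}$ has at least two components for every $c$, contradicting connectedness of $M$ at $c=\max L_q$.

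\textbf{Main obstacle.} The delicate point is justifying the multiplicity bound at singular points and ensuring that the perturbed $L_q$ is Morse while the two minima remain distinct. I would handle the first by the continuity of the spectral projections onto eigenvalue clusters, as described above. The second follows from nondegeneracy of the minima together with the implicit function theorem.
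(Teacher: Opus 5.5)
Your argument is essentially the paper's proof. No principal curvature is simple anywhere, by continuity of eigenvalue clusters, so the index of $L_q$ at a critical point, being a sum of principal multiplicities, is never one. Sard's theorem gives Morse centers near $q_0$, the two nondegenerate minima persist, and a Morse function on a compact connected manifold with no index-one critical points has a unique minimum.

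Two points need fixing.

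\begin{enumerate}
\item Your claim that every index is even is false. A single principal curvature of multiplicity three, or two of multiplicities $2$ and $3$, gives an odd index; the paper's Remark~\ref{rem:divisor} makes exactly this point. As written, ``never one'' rests on this false claim. Derive it instead from the fact that a nonempty sum of integers $\ge2$ is $\ge2$, so every index is $0$ or at least $2$. The same correction applies to ``all of even index'' in Step~2.
\item Your appeal to Corollary~\ref{cor:analytic} and Proposition~\ref{prop:distances} for the Morse--Bott property is not needed, since Step~2 uses only Morse centers. Dropping it, together with the vague remark about ``nullity-weighted Morse--Bott data,'' keeps the proof elementary and independent of Theorem~\ref{thm:A}, as in the paper.
\end{enumerate}
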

\begin{proof}
A simple principal curvature at any point would remain
simple on a neighborhood meeting $\G$, since multiplicity in singular points can only increase. Thus no
principal curvature is simple anywhere. At a critical
point of $L_q$, its Hessian is $I-tA$, so its index
is a sum of principal multiplicities. In particular,
a Morse distance function has no critical point of
index one. Such Morse functions occur for dense
centers $q$, by Sard's theorem for the normal
endpoint map.

A Morse function on a compact connected manifold
with no index-one critical points has exactly one
minimum, because minima create components, and handles of
index at least two have connected attaching regions
and cannot join them. If $f(p_1)=f(p_2)=q$ for
$p_1\ne p_2$, both points are nondegenerate minima
of $L_q$, with Hessian $I$. They persist in disjoint
neighborhoods for nearby centers. Choosing a nearby
Morse center contradicts uniqueness of the minimum.
Hence $f$ is injective, and compactness makes it
an embedding.
\end{proof}

\begin{proof}[Proof of Corollary~\ref{cor:taut}]
Proposition~\ref{prop:embedding} gives embeddedness,
while again the Codazzi equation gives \eqref{eq:D}.
Proposition~\ref{thm:B} now applies and yields the claim.
\end{proof}

\begin{remark}\label{rem:divisor}
For a compact connected immersion, if all regular principal multiplicities share a divisor
$d\ge2$, this divisibility persists at singular points,
where multiplicities add. Every Morse distance index
is divisible by $d$, so the Morse complex has no
nonzero differentials and the immersion is taut.
Chi's theorem then gives analyticity in the compact
embedded case \cite{Chi}. The argument does not cover
the weaker assumption that all multiplicities are at
least two, because sums of such multiplicities may be
consecutive in this case. Our analyticity result also requires
neither compactness nor embeddedness.
\end{remark}

\section{Finiteness and deformations}\label{fin:sec:carriers}

The basic idea of the finiteness argument is to
describe the submanifolds under consideration by
semialgebraic families depending on finitely many
real parameters. The uniform degree bound obtained
in Theorem~\ref{thm:A} provides the starting point
for controlling the complexity of these descriptions
independently of the individual submanifold.
Hardt's semialgebraic triviality theorem provides
the necessary control of connected components
in such families. Once a uniform bound on description
complexity has been established, the submanifolds
can be organized into finitely many semialgebraic
parameter families.

The Nash triviality theorem of Coste--Shiota
then provides finite partitions of the parameter
spaces over which these families admit Nash
trivializations. Suitable Nash paths within
these parameter pieces, together with the
trivializations, yield Nash isotopies between
the corresponding embedded submanifolds.
For the stronger finiteness results, we further
show that tautness defines a semialgebraic
condition on the parameters. Restricting the
families to the taut parameter sets then allows
the isotopies to preserve tautness.

The algebraic carrier provided by
Theorem~\ref{thm:A} may have singularities and
additional real branches. We therefore first have to
pass from this carrier to the embedded
hypersurface itself.

Throughout the remaining sections, tautness and
homology are understood over $\Z_2$. We write
\[
 b_j(M)=\dim_{\Z_2}H_j(M;\Z_2) \ \ \text{ and } \ \  \beta(M)=\sum_jb_j(M).
\]

\subsection{From degree to description complexity}
For a polynomial $P$ in $n$ variables, write
$Z(P)=\{x\in \R^n \ | \ P(x)=0\}$ and
$Z_{\mathrm{reg}}(P)=\{x\in \R^n \ | \ P(x)=0\ \wedge\ |(\grad P)(x)|^2>0\}$.

\begin{lemma}[Selecting regular components]\label{fin:lem:selection}
Let $H\subset\R^n$ be a compact smooth embedded hypersurface, and
let $P\in\R[x_1,\ldots,x_n]$ be a nonconstant irreducible
polynomial with $H\subset Z(P)$. Then $H$ is the closure of a
union of whole connected components of $Z_{\mathrm{reg}}(P)$.
In particular, $H$ is semialgebraic and is a Nash submanifold of
$\R^n$.
\end{lemma}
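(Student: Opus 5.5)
The plan is to compare $H$ with the regular part $Z_{\mathrm{reg}}(P)$ inside $H$ and to show that $H$ is the closure of $H\cap Z_{\mathrm{reg}}(P)$, which itself is a union of whole components of $Z_{\mathrm{reg}}(P)$.

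\emph{Step 1: density.} Since $P$ is irreducible and nonconstant, the polynomials $P$ and $\partial_jP$ (for some $j$ with $\partial_jP\not\equiv0$) share no common factor. Locally $H$ is an analytic-like smooth graph contained in $Z(P)$, but we only know smoothness a priori. Instead I will use dimension: the singular set $S=Z(P)\cap\{\grad P=0\}$ is an algebraic set of dimension at most $n-2$. The argument is the standard one for irreducible real hypersurfaces: if $\dim Z(P)=n-1$, then $P$ generates the real ideal of its Zariski closure, and $\grad P$ cannot vanish on an $(n-1)$-dimensional piece. Here $\dim Z(P)\ge n-1$ holds because $H\subset Z(P)$, and $\dim Z(P)\le n-1$ holds because $P\ne0$. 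Hence $H\cap S$ has dimension at most $n-2$ and so has empty interior in the $(n-1)$-manifold $H$. It follows that $H\cap Z_{\mathrm{reg}}(P)$ is open and dense in $H$.

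\emph{Step 2: whole components.} At a point $x\in H\cap Z_{\mathrm{reg}}(P)$, the set $Z(P)$ is near $x$ a smooth hypersurface by the implicit function theorem. The manifold $H$ is contained in it and has the same dimension, so by invariance of domain $H$ and $Z(P)$ coincide near $x$. Thus $H\cap Z_{\mathrm{reg}}(P)$ is open in $Z_{\mathrm{reg}}(P)$. It is also closed in $Z_{\mathrm{reg}}(P)$, since $H$ is compact. Therefore $H\cap Z_{\mathrm{reg}}(P)$ is a union of connected components $W_i$ of $Z_{\mathrm{reg}}(P)$. Combining this with Step 1 and the closedness of $H$ gives $H=\overline{\bigcup W_i}$.

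\emph{Step 3: semialgebraicity.} The set $Z_{\mathrm{reg}}(P)$ is semialgebraic, so it has finitely many components, each semialgebraic \cite[Chapter~2]{BCR}. Hence the union in Step 2 is finite, and its closure $H$ is semialgebraic. A smooth semialgebraic submanifold is Nash: locally $H$ is a graph of a smooth function whose graph is semialgebraic, and such a function is Nash by \cite[Chapter~8]{BCR}.

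The main obstacle is Step 1, namely the claim that the singular locus of an irreducible $P$ cannot contain an open piece of an $(n-1)$-dimensional smooth set. Over $\R$ irreducibility alone does not suffice, as shown by $x^2+y^2$ in the plane, so it is essential to use that $Z(P)$ has dimension $n-1$. I would invoke the real Nullstellensatz criterion \cite[Section~4.5]{BCR}: an irreducible $P$ with $\dim Z(P)=n-1$ generates a real ideal, so the ideal of polynomials vanishing on $Z(P)$ is $(P)$. If $\grad P$ vanished on an open subset of $H$, its Zariski closure would have dimension $n-1$, hence equal the Zariski closure of $Z(P)$. Then each $\partial_jP$ would lie in $(P)$, which is impossible by degree unless $\partial_jP\equiv0$ for all $j$, contradicting that $P$ is nonconstant.
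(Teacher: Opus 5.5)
Your proof is correct. Steps 2 and 3 coincide with the paper's argument: near a regular point the inclusion is a local diffeomorphism (invariance of domain), the intersection is open and closed in $Z_{\mathrm{reg}}(P)$ by compactness, there are finitely many semialgebraic components, and a smooth semialgebraic submanifold is Nash.

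The real difference is in Step 1, the density of $H\cap Z_{\mathrm{reg}}(P)$ in $H$.

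The paper argues locally and elementarily. In affine graph coordinates $(u,t)$ over an open piece of $H$, the polynomial $P$ must depend on $t$, and Gauss's lemma makes it irreducible over $\R(u)$. Hence for any $Q$ with $P\nmid Q$ the resultant $\Res_t(P,Q)=UP+VQ$ is a nonzero polynomial in $u$. Substituting the graph makes it vanish on an open set, which is impossible. So every polynomial vanishing on an open piece of $H$ is divisible by $P$, and taking $Q=\partial_iP$ gives the degree contradiction.

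You argue globally instead. You use the real-algebraic criterion that an irreducible $P$ with $\dim Z(P)=n-1$ generates a real prime ideal. This gives $I(Z(P))=(P)$ and makes $Z(P)$ irreducible; that irreducibility is implicit in your ``hence equal'' step. You combine this with semialgebraic dimension theory: an open piece of a smooth hypersurface has Zariski closure of dimension $n-1$, and a proper algebraic subset of an irreducible algebraic set has strictly smaller dimension.

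Your route yields stronger global facts, namely $I(Z(P))=(P)$ and $\dim\{x\in Z(P)\mid \grad P(x)=0\}\le n-2$. The cost is citing the real Nullstellensatz machinery. The paper's route is self-contained, reuses the resultant technique of Lemma~\ref{lem:contact}, and proves exactly the local divisibility statement that is needed.

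Your stated reason for abandoning the local approach, that only smoothness of $H$ is known a priori, is not actually an obstacle. The resultant identity only requires substituting a continuous graph function, so no analyticity enters.

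Your example $x^2+y^2$ is a good sanity check. In the paper's argument, the hypothesis that excludes it is that $Z(P)$ contains an open graph piece of a hypersurface; in yours, that hypothesis appears as the dimension condition.
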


\begin{proof}
If a polynomial $Q$ vanishes on an open piece of $H$,
then $P\mid Q$. Indeed, choose affine graph coordinates
$(u,t)$ on that piece. The polynomial $P$ depends on
$t$, since otherwise it vanishes on an open subset of
the $u$-space. As in the proof of Lemma~\ref{lem:contact},
Gauss's lemma shows that $\Res_t(P,Q)$ is a nonzero
polynomial in $u$ whenever $P\nmid Q$.
Substituting the graph into the resultant identity makes
this polynomial vanish on an open set, which contradicts
$\Res_t(P,Q)\not\equiv0$.

It follows that $H\cap Z_{\mathrm{reg}}(P)$
is dense in $H$. Indeed, otherwise $\grad P$
would vanish on a nonempty open subset of $H$.
The preceding argument would then imply
$P\mid\partial_iP$ for every $i$.
Since $P$ is nonconstant, at least one of these
derivatives is nonzero and has degree smaller
than $\deg P$, a contradiction.
 At regular algebraic points, the
inclusion of $H$ into $Z_{\mathrm{reg}}(P)$ is a local
diffeomorphism between manifolds of dimension $n-1$.
Thus their intersection is open in the regular locus, and
it is also closed there because $H$ is compact. It is
therefore a union of whole connected components, and
\[
 H=\overline{H\cap Z_{\mathrm{reg}}(P)}.
\]
Since $Z_{\mathrm{reg}}(P)$ is semialgebraic,
it has finitely many connected components,
each of which is semialgebraic.
As $H\cap Z_{\mathrm{reg}}(P)$ is a union of
such components, it is semialgebraic.
Its closure is $H$, which is therefore
semialgebraic as well. The last assertion then follows from the fact that a smooth
semialgebraic submanifold is Nash \cite[Chapter~8]{BCR}.
\end{proof}

In the following we measure the description complexity of a semialgebraic
set $S\subset\mathbb R^n$ by the number and degrees of
the polynomials needed to describe it. More precisely,
we consider Boolean combinations of polynomial
equalities and strict inequalities
\[
P_i(x)=0\ \ \text{ or }\ \ P_i(x)>0,
\]
and say that $S$ has description complexity bounded
by $(s,d)$ if it admits such a description involving
at most $s$ polynomials, each of degree at most $d$.
No bounds on their coefficients are imposed.
For our purposes, this notion of uniformly bounded
complexity is equivalent to the degree convention
used by Coste--Shiota \cite{CS}.

\begin{lemma}[Uniform description complexity]\label{fin:lem:complexity}
For fixed integers $n$ and $D$, the closure of any union of
connected components of $Z_{\mathrm{reg}}(P)$, where $P$ is a
nonzero real polynomial of degree at most $D$, has a semialgebraic
description of complexity at most $c(n,D)$.
\end{lemma}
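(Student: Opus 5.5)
The plan is to realize all such sets as fibers of a single semialgebraic family and then invoke Hardt's triviality theorem. Identify $\Pcal_D$ (polynomials in $n$ variables of degree at most $D$) with $\R^{N}$, $N=\binom{n+D}{n}$, via coefficient vectors. The set
\[
 \mathcal Z=\{(P,x)\in\R^N\times\R^n \mid P\ne0,\ P(x)=0,\ |(\grad P)(x)|^2>0\}
\]
is semialgebraic, defined by polynomials whose number and degrees depend only on $(n,D)$. Its fiber over $P$ is $Z_{\mathrm{reg}}(P)$.

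\textbf{Step 1: trivialize the family.} Apply Hardt's theorem to the projection $\mathcal Z\to\R^N$. This yields a finite semialgebraic partition $\R^N=\bigcup_{k=1}^{r}A_k$ together with semialgebraic trivializations $\mathcal Z|_{A_k}\cong A_k\times F_k$ over each piece. Each $F_k$ is a semialgebraic set with finitely many connected components, say $c_k$ of them. Under the trivialization, the connected components of $Z_{\mathrm{reg}}(P)$ for $P\in A_k$ correspond bijectively to the sets $A_k\times F_{k,j}$, where $F_{k,j}$ are the components of $F_k$. So every component of every fiber is the fiber over $P$ of one of the finitely many semialgebraic sets $W_{k,j}\subset \mathcal Z|_{A_k}$, namely the images of $A_k\times F_{k,j}$.

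\textbf{Step 2: pass to unions and closures.} A union of components is indexed by a subset $J\subset\{1,\dots,c_k\}$, so there are finitely many possible unions. Put $W_{k,J}=\bigcup_{j\in J}W_{k,j}$. The fiberwise closure
\[
 \overline{W}_{k,J}=\{(P,x)\mid P\in A_k,\ \forall\varepsilon>0\ \exists y:(P,y)\in W_{k,J},\ |x-y|<\varepsilon\}
\]
is first-order definable, hence semialgebraic by Tarski--Seidenberg. Its fiber over $P$ is exactly the closure in $\R^n$ of the chosen union of components. We now have finitely many semialgebraic sets $\overline W_{k,J}\subset\R^N\times\R^n$.

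\textbf{Step 3: extract a uniform complexity bound.} Fix a Boolean description of each $\overline W_{k,J}$ by polynomials $R_i(P,x)$. Specializing $P$ gives a description of the fiber by the polynomials $R_i(P,\cdot)$, whose number and $x$-degrees are bounded by those of the $R_i$. Taking the maximum over the finitely many pairs $(k,J)$ gives $c(n,D)$, independent of the coefficients of $P$.

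\textbf{Main obstacle.} The delicate point is making the component bookkeeping in Step 1 rigorous. The components of $Z_{\mathrm{reg}}(P)$ must be exactly the fibers of the $W_{k,j}$, which holds because the trivialization is a homeomorphism over $A_k$ commuting with the projection. The trivializing homeomorphism is only semialgebraic, not polynomial, but its graph is semialgebraic, so each $W_{k,j}$ is semialgebraic. The closure step is fiberwise; it is not the closure in the total space, which could acquire extra points from limits in $P$. This is why the closure is defined by the quantified formula in Step 2 rather than by taking the topological closure of $W_{k,J}$.
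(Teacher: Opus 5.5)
Your proposal is correct and follows essentially the same route as the paper: you apply Hardt triviality to the regular zero-set family over the coefficient space, take the finitely many unions of fiber components over each trivialization piece, and form fiberwise closures by a quantified formula. Quantifier elimination then gives a fixed Boolean description whose specialization at each coefficient vector bounds the complexity uniformly in $(n,D)$.
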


\begin{proof}
Parametrize all nonzero polynomials of degree at most $D$ by
their coefficient vectors $a\in T=\R^N\setminus\{0\}$,
where $N=\binom{n+D}{D}$, and apply Hardt triviality
\cite{Hardt} to
\[
 \mathcal R=\{(x,a)\ | \ P_a(x)=0 \wedge |\grad_xP_a(x)|^2>0\}.
\]
By Hardt's semialgebraic triviality theorem
\cite{Hardt}, the parameter space $T$ admits a
finite semialgebraic partition into sets $T_i$
such that the family is semialgebraically trivial
over each $T_i$, with a fixed fiber $F_i$. This means that
the family over $T_i$ is semialgebraically
homeomorphic to $F_i\times T_i$, with the
homeomorphism preserving the projection onto $T_i$.
In particular, any two fibers over points of $T_i$
are semialgebraically homeomorphic and the trivialization identifies the connected
components of each fiber with those of $F_i$.
Since $F_i$ has only finitely many connected
components, all possible unions of such components
are represented by finitely many semialgebraic
families $\mathcal U_{i,I}$ over $T_i$.

Their fiberwise closures are defined by
\[
(x,a)\in\mathcal K_{i,I}
\quad\Longleftrightarrow\quad
a\in T_i\ \wedge\
\bigl(\forall\delta>0\ \exists z:\
(z,a)\in\mathcal U_{i,I}\ \wedge\
|x-z|^2<\delta^2\bigr).
\]
In this formula, the parameter $a$ is kept fixed
throughout the quantification. Thus the condition
describes the closure of the selected union of
components within each individual fiber, rather
than the closure of the total family in the
product space.

Since the family $\mathcal U_{i,I}$ is semialgebraic,
again the Tarski--Seidenberg theorem, applied through
quantifier elimination, shows that $\mathcal K_{i,I}$
is semialgebraic as well. Moreover, the defining
formula involves only a fixed finite collection
of polynomial expressions in the variables
$(x,a)$, with degrees and numbers determined
by the family.

Quantifier elimination therefore provides a
semialgebraic description of $\mathcal K_{i,I}$ whose
complexity is bounded independently of $a$.
Since only finitely many families $\mathcal K_{i,I}$
occur, the maximum of these bounds depends
only on $n$ and $D$. Specializing the parameter
$a$ then yields a description of each individual
fiber closure with the same uniform complexity
bound.

\end{proof}

Recall that two compact Nash submanifolds of
$\R^n$ are Nash isotopic if there exists a
Nash family of embeddings
\[
\iota_t:M\longrightarrow\R^n,
\qquad 0\le t\le1,
\]
whose endpoint images are the given submanifolds.
Here the map $(x,t)\mapsto\iota_t(x)$ is required
to be jointly Nash.

Two embedded submanifolds have the same smooth
ambient isotopy type if one can be carried onto
the other by a smooth isotopy of diffeomorphisms
of $\R^n$ starting at the identity.

For compact embedded manifolds, a smooth isotopy
of embeddings admits a smooth ambient extension.
Indeed, after a smooth reparametrization of time
that is constant near the endpoints, prescribe
the velocity
\[
V_t(\iota_t(x))=\partial_t\iota_t(x)
\]
along the track of the isotopy. This velocity
field extends to a smooth time-dependent vector
field on $\R^n$ with support in a fixed compact
set. Its flow $\Phi_t$ exists for $0\le t\le1$
and satisfies
\[
\Phi_t\circ\iota_0=\iota_t
\]
by uniqueness. Consequently, Nash isotopy generally implies
smooth ambient isotopy for compact embedded
submanifolds.
\par\vspace{\baselineskip}

Combining the uniform description complexity established
above with the Nash finiteness theorem of Coste--Shiota
and the isotopy extension argument, we obtain the
following finiteness result.

\begin{theorem}[Finiteness for bounded-degree carriers]
\label{fin:thm:carrier}
For fixed integers $n\ge2$ and $D\ge1$, consider the
class of compact connected smooth embedded hypersurfaces
$H\subset\R^n$ whose images are contained in the zero
set of a nonconstant irreducible polynomial of degree
at most $D$. This class contains only finitely many
Nash isotopy types, smooth ambient isotopy types,
and abstract diffeomorphism types.
\end{theorem}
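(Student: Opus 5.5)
The plan is to combine Lemma~\ref{fin:lem:selection}, Lemma~\ref{fin:lem:complexity} and the Coste--Shiota finiteness theorem for semialgebraic families of bounded complexity \cite{CS}, and then to pass from Nash isotopy to ambient isotopy and diffeomorphism type by the isotopy extension argument recalled above.

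First, let $H$ be in the class and $P$ an irreducible polynomial of degree at most $D$ with $H\subset Z(P)$. By Lemma~\ref{fin:lem:selection}, $H$ is the closure of a union of connected components of $Z_{\mathrm{reg}}(P)$ and is a Nash submanifold. Lemma~\ref{fin:lem:complexity} then places $H$ as the fiber $\mathcal K_{i,I}\cap(\R^n\times\{a\})$ of one of finitely many semialgebraic families $\mathcal K_{i,I}\subset\R^n\times T_i$, with $a$ the coefficient vector of $P$. Only the parameters $a\in T_i$ for which the fiber is a compact connected smooth embedded hypersurface are relevant. I would verify that this parameter set $T_{i,I}'\subset T_i$ is semialgebraic. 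Compactness means boundedness, which is first order. Connectedness is constant on pieces of a Hardt partition. Being a smooth hypersurface can be expressed through local graph conditions on the semialgebraic closure, or obtained by refining the partition with Coste--Shiota so that the topology and smoothness type are constant on each piece.

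Next, I would apply the Nash triviality theorem of Coste--Shiota to the restricted families over $T_{i,I}'$. This gives a finite partition of each $T_{i,I}'$ into Nash manifolds that are semialgebraically connected, in fact Nash path connected, over which the family is Nash trivial. Two members over the same piece are joined by a Nash path $a(t)$ in that piece, and composing with the trivialization gives a jointly Nash family of embeddings $\iota_t$. This is the required Nash isotopy, and it yields finitely many Nash isotopy types. The ambient extension argument above, after a smooth time reparametrization, then gives a smooth ambient isotopy. Diffeomorphism types follow, since ambient isotopic submanifolds are diffeomorphic.

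I expect the main obstacle to be the Coste--Shiota step. Nash triviality is usually stated for families of Nash manifolds over a Nash base, or for compact families, and the trivialization must respect the fibers as embedded submanifolds of $\R^n$, not only abstractly. To handle this I would first stratify by the semialgebraic set of parameters where the fiber is a compact Nash hypersurface. I would then invoke Coste--Shiota's finiteness theorem directly in the form: for bounded complexity, compact Nash submanifolds of $\R^n$ fall into finitely many Nash isotopy classes. The remaining work is matching the complexity convention, which the paper asserts is equivalent to ours.
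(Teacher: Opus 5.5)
Your proposal is correct and is essentially the paper's proof: Lemmas~\ref{fin:lem:selection} and~\ref{fin:lem:complexity} make each member a Nash submanifold of complexity at most $c(n,D)$, Coste--Shiota's finiteness theorem \cite[Theorem~B]{CS} gives finitely many Nash isotopy types, and the isotopy extension argument yields finiteness of ambient isotopy types and diffeomorphism types. Your longer route through Nash triviality and Nash paths is not needed here (it mirrors the paper's later Lemma~\ref{fin:lem:tautfamily}); if you keep it, note that Nash triviality already assumes Nash fibers, so the semialgebraicity of the set of parameters whose fibers are smooth hypersurfaces must come from the graph and finite-differentiability conditions, not from refining the partition with Coste--Shiota.
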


\begin{proof}
The two preceding lemmas make every such hypersurface Nash with
description complexity bounded by a constant $c(n,D)$.
Coste--Shiota's finiteness theorem
\cite[Theorem~B]{CS} then gives finitely many
Nash isotopy types. By the isotopy extension
argument recalled above, this also gives only
finitely many smooth ambient isotopy types
and abstract diffeomorphism types.
\end{proof}

\begin{corollary}[Ordinary finiteness for Dupin hypersurfaces]
\label{fin:cor:dupin}
For each $m\ge1$, the class of compact connected smooth
embedded hypersurfaces $M^m\subset\R^{m+1}$ satisfying
\eqref{eq:D} has uniformly bounded semialgebraic
description complexity and contains only finitely many
smooth ambient isotopy types and diffeomorphism types.
Moreover, there exists a constant $B(m)$ depending
only on $m$ such that
\(
\beta(M)\le B(m)
\)
for every hypersurface in this class.
\end{corollary}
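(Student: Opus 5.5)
The plan is to reduce Corollary~\ref{fin:cor:dupin} to Theorem~\ref{fin:thm:carrier} with $n=m+1$ and $D=D(m)$ from Theorem~\ref{thm:A}, and to read off the Betti bound from the finiteness of diffeomorphism types.

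\emph{Step 1 (bounded-degree carrier).} Let $M\subset\R^{m+1}$ be a compact connected smooth embedded hypersurface satisfying \eqref{eq:D}. Apply Theorem~\ref{thm:A} to the inclusion $f\colon M\to\R^{m+1}$. Since $M$ is connected, its image lies in $Z(P)$ for a nonconstant irreducible polynomial $P$ with $\deg P\le D(m)$, and $D(m)$ depends only on $m$. Hence $M$ belongs to the class of Theorem~\ref{fin:thm:carrier} with $n=m+1$, $D=D(m)$.

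\emph{Step 2 (description complexity and finiteness).} Lemma~\ref{fin:lem:selection} shows that $M$ is the closure of a union of connected components of $Z_{\mathrm{reg}}(P)$. Lemma~\ref{fin:lem:complexity} then bounds its description complexity by $c(m+1,D(m))$, which is the first assertion. Theorem~\ref{fin:thm:carrier} gives finitely many Nash isotopy types, hence finitely many smooth ambient isotopy types and diffeomorphism types. The Dupin condition is not used beyond Step 1, so no deformation needs to preserve it.

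\emph{Step 3 (Betti bound).} The $\Z_2$-Betti numbers are diffeomorphism invariants, so $\beta$ takes only finitely many values on the class. Let $B(m)$ be the maximum of $\beta$ over a finite set of representatives. For a bound with more explicit control one could instead use that the family $\mathcal K_{i,I}$ over each Hardt piece $T_i$ has finitely many topological fiber types, or use a Milnor--Thom type bound in terms of $D(m)$, but the finiteness argument suffices here.

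\emph{Main obstacle.} The only real concern is the choice of the constant $D$. Theorem~\ref{fin:thm:carrier} is stated for a fixed $D$, while the actual degree of $P$ may be smaller than $D(m)$. Since ``degree at most $D$'' is built into the hypothesis, this is harmless. All the genuine difficulty was already handled in Theorem~\ref{thm:A}, which supplies a degree bound uniform over all Dupin hypersurfaces of dimension $m$ without any local finiteness assumption.
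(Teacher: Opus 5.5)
Your proposal is correct and follows the paper's proof essentially step for step: Theorem~\ref{thm:A} gives a carrier of degree at most $D(m)$, Lemmas~\ref{fin:lem:selection} and~\ref{fin:lem:complexity} give the uniform complexity bound, Theorem~\ref{fin:thm:carrier} with $n=m+1$, $D=D(m)$ gives the finiteness, and $B(m)$ is the maximum of $\beta$ over finitely many diffeomorphism representatives. Your remark that the Dupin condition is used only through the degree bound, and need not be preserved along the isotopies, matches the paper's comment after the corollary.
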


\begin{proof}
By Theorem \ref{thm:A}, every hypersurface in the class
under consideration is contained in the zero set
of a nonconstant irreducible polynomial of degree
at most $D(m)$, where $D(m)$ depends only on $m$.
Lemmas \ref{fin:lem:selection} and \ref{fin:lem:complexity} therefore provide a uniform
bound on its semialgebraic description complexity.

Applying Theorem \ref{fin:thm:carrier} with $n=m+1$ and $D=D(m)$
shows that only finitely many smooth ambient
isotopy types and diffeomorphism types occur.
Since diffeomorphic manifolds have the same
Betti numbers, the total Betti number $\beta(M)$
is bounded by the maximum over a finite list
of representatives of these diffeomorphism types.
This gives the required bound $B(m)$.
\end{proof}

This argument uses Theorem~\ref{thm:A} alone and does not
yet retain the Dupin condition during the isotopy. It also
applies to the compact immersions in
Corollary~\ref{cor:taut}, which are embedded by
Proposition~\ref{prop:embedding}. Arbitrary compact Dupin
immersions need not be embeddings, as nontrivial finite
coverings of a Dupin torus show.

\subsection{Taut submanifolds in arbitrary codimension}
\label{fin:sec:taut}

\begin{corollary}[Fixed ambient dimension]\label{fin:cor:taut}
For each $n\ge2$, the class of compact connected
smooth taut submanifolds of $\R^n$, of arbitrary
codimension, has uniformly bounded semialgebraic
description complexity and contains only finitely
many smooth ambient isotopy types.

\end{corollary}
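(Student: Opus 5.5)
The plan is to reduce to the hypersurface case by small tubes, exactly as in the proof of Theorem~\ref{thm:general}, and then to extract a bounded-complexity description of the original submanifold from that of the tube.

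Let $M^m\subset\R^n$ be compact, connected and taut. If $m=n-1$, Pinkall's theorem makes $M$ Dupin, so \eqref{eq:D} holds. Theorem~\ref{thm:A} then places $M$ in the zero set of an irreducible polynomial of degree at most $D(n-1)$. Lemmas~\ref{fin:lem:selection} and~\ref{fin:lem:complexity} give complexity at most $c(n,D(n-1))$.

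For $m<n-1$, tautness gives the Dupin condition in the unit normal bundle sense, hence \eqref{eq:Dnu}. For $\varepsilon$ smaller than the focal distance of $M$, the tube $T=\partial\{z : \dist(z,M)\le\varepsilon\}$ is a compact embedded hypersurface of $\R^n$. By the tube computation in Section~\ref{sec:tubes}, $T$ satisfies \eqref{eq:D}. It need not be connected when $n-m=1$, but here $n-m\ge2$, so $\nu^1M$ is connected. Applying Theorem~\ref{thm:A}, $T\subset Z(P)$ for an irreducible $P$ with $\deg P\le D(n-1)$. Lemmas~\ref{fin:lem:selection} and~\ref{fin:lem:complexity} then bound the complexity of $T$ by $c(n,D(n-1))$, uniformly in $M$ and $\varepsilon$. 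Bounding the number of components of $T$ via Hardt/Milnor--Thom bounds on the regular components of $Z(P)$ suffices for the argument in any case.

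The key step is to recover $M$ from $T$ by a first-order formula of bounded length. The submanifold $M$ is the set of centers of round $(n-m-1)$-spheres of radius $\varepsilon$ contained in $T$ and lying in an $(n-m)$-plane. More robustly, it is the set of points at distance exactly $\varepsilon$ from $T$ that are enclosed by $T$ and maximize the distance to $T$ locally:
\[
 M=\{x \mid \exists y\in T: |x-y|=\varepsilon \ \wedge\ \forall y\in T:\ |x-y|\ge\varepsilon\}.
\]
This holds because every point at distance $\ge\varepsilon$ from $T$ inside the tube region must lie on $M$ (the tube region is the $\varepsilon$-neighborhood of $M$). To avoid the exterior, I would instead use
\[
 M=\{x \mid \forall y\in T:\ |x-y|\ge\varepsilon\}\cap\{x \mid \exists y\in T:\ |x-y|=\varepsilon\}
\]
intersected with the set of points whose full $\varepsilon$-sphere-slice in a normal plane lies in $T$. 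The cleaner alternative is the set of $x$ such that $\{y\in T : |x-y|=\varepsilon\}$ has dimension $n-m-1$, and dimension is semialgebraically definable with uniform complexity in families. Either formula uses the parameter $\varepsilon$ and the fixed-complexity description of $T$ a bounded number of times.

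Quantifier elimination applied to the universal family, parametrized by the coefficients of the $c(n,D(n-1))$ polynomials and by $\varepsilon$, gives a uniform complexity bound for $M$, independent of $M$ and of $m$, since $m<n$ has finitely many values. Coste--Shiota's finiteness theorem \cite[Theorem~B]{CS}, together with the isotopy extension argument, then yields finitely many Nash, hence smooth ambient, isotopy types, as in Theorem~\ref{fin:thm:carrier}.

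The main obstacle is making the recovery formula for $M$ correct and genuinely first order for every taut $M$. In particular, it must select exactly $M$ and no spurious exterior points at distance $\varepsilon$, which can occur in nonconvex configurations; this is why the normal-sphere dimension condition is needed. I would first try the formulation ``$x$ with $\dist(x,T)=\varepsilon$ and $\dim\{y\in T : |x-y|=\varepsilon\}\ge n-m-1$'' and check it against the embedded tubular neighborhood structure.
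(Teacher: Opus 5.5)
Your reduction to the tube is fine. Bounding the complexity of $T$ via \eqref{eq:Dnu}, the tube computation of Section~\ref{sec:tubes} and Theorem~\ref{thm:A} is equivalent to the paper's use of Pinkall's tube theorem plus the hypersurface case. One caveat: embeddedness of $T$ needs $\varepsilon$ below the reach $\rho$ of $M$, not merely below the focal distance. For taut $M$ the two agree, but that needs an argument.

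The gap is the step you flag yourself: none of your recovery formulas is correct as stated.
\begin{itemize}
\item Outside the closed tube one has $\dist(x,T)=\dist(x,M)-\varepsilon$, and the nearest points of such an $x$ on $T$ are the midpoints between $x$ and its footpoints on $M$. Hence your first set is $M\cup\{x \mid \dist(x,M)=2\varepsilon\}$.
\item The normal-plane variant uses normal planes of $M$, which a formula in $T$ cannot refer to.
\item The dimension condition also fails. Take a round circle $M\subset\R^3$ of radius $R$ (reach $R$) and any admissible $\varepsilon\in[R/2,R)$. The axis points $(0,0,\pm\sqrt{4\varepsilon^2-R^2})$ have distance $2\varepsilon$ from every point of $M$, hence distance $\varepsilon$ from the torus $T$, and their nearest-point set on $T$ is a circle. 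They satisfy your final formula without lying on $M$.
\item For $\varepsilon=R/2$ the center of $M$ even has a great circle of radius $\varepsilon$ in a $2$-plane through it inside $T$. So quantifying over all $(n-m)$-planes through $x$ does not rescue the sphere characterization either.
\end{itemize}

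What is missing is a way to discard the exterior level set $\{\dist(\cdot,M)=2\varepsilon\}$. The paper does this with the open tube $\Omega_\varepsilon$, the unique bounded component of $\R^n\setminus T$. It shows that $\Omega_\varepsilon$ has uniformly bounded complexity, using Hardt triviality on the complements in each description format together with the first-order boundedness condition. Inside the tube $\dist(x,T)=\varepsilon-\dist(x,M)$, which gives $N=\{x\in\Omega_\varepsilon \mid \forall y\in T:\ |x-y|\ge\varepsilon\}$.

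Your route can be repaired more cheaply by taking $\varepsilon<\rho/2$. Then every point at distance $2\varepsilon$ from $M$ has a unique footpoint, hence a unique nearest point on $T$. Each $x\in M$, by contrast, has the whole normal sphere $x+\varepsilon\nu^1_xM$, of dimension $n-m-1\ge1$, as its nearest-point set. This gives
\[
 M=\{x \mid \forall y\in T:\ |x-y|\ge\varepsilon\ \wedge\ \exists y_1\ne y_2\in T:\ |x-y_1|=|x-y_2|=\varepsilon\}.
\]
This is first order in the description of $T$ and the parameter $\varepsilon$, and it avoids the component-selection step. With either fix, the remaining steps (quantifier elimination in the universal family, Coste--Shiota, isotopy extension) go through as you describe.
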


\begin{proof}
For hypersurfaces, tautness implies the Dupin condition
\cite{Miyaoka,Pinkall}, so Corollary~\ref{fin:cor:dupin} applies.
Let $N\subset\R^n$ have codimension $r\ge2$, and choose
$\varepsilon>0$ smaller than a tubular radius. The tube
\[
 H_\varepsilon=\{x\in \R^n\ | \ \dist(x,N)=\varepsilon\}
\]
is a compact embedded taut hypersurface by the tube
theorem \cite{Pinkall}. It is connected because its
normal sphere fibers $S^{r-1}$ and its base are
connected. Its description complexity is therefore
bounded solely in terms of $n$.

The open normal disk bundle
$\Omega_\varepsilon=\{x \in \R^n\ | \ \dist(x,N)<\varepsilon\}$
is the unique bounded component of
$\R^n\setminus H_\varepsilon$. 
The bounded component $\Omega_\varepsilon$ also has
uniformly bounded semialgebraic description complexity.
To see this, consider the finitely many possible
description formats for $H_\varepsilon$, with the
polynomial coefficients treated as parameters.

Applying Hardt's semialgebraic triviality theorem
to the complements of the hypersurfaces in these
families gives finite parameter partitions on which
the connected components of the complements can be
identified uniformly.

For a component $C_a$ selected in such a family,
boundedness can be expressed by the first-order
condition
\[
\exists R>0\ \forall x\in\R^n:\
\bigl(x\in C_a \Longrightarrow |x|^2<R^2\bigr).
\]
Since $\Omega_\varepsilon$ is the unique bounded
component of $\R^n\setminus H_\varepsilon$, this
condition identifies it among the labeled
components.

Quantifier elimination then provides a
semialgebraic description of the selected
component, with complexity bounded independently
of the coefficients of $H_\varepsilon$.
Since only finitely many description formats
and parameter pieces occur, the resulting bound
depends only on $n$.

The original submanifold is recovered by
\begin{equation}\label{fin:eq:recover}
N=\{x\in\Omega_\varepsilon \ | \
\forall y\in H_\varepsilon:\
|x-y|^2\ge\varepsilon^2\}.
\end{equation}
To verify this claim, first let $x\in N$.
Since every point of $H_\varepsilon$ has distance
$\varepsilon$ from $N$, we have
$|x-y|\ge\varepsilon$ for all $y\in H_\varepsilon$.
Thus $x$ belongs to the set on the right-hand side
of \eqref{fin:eq:recover}.
Conversely, suppose that $x\in\Omega_\varepsilon$
but $x\notin N$. Since $\varepsilon$ is smaller
than the tubular radius, there is a unique
representation $x=p+v$, where $p\in N$ and
$v\in\nu_pN$ satisfies $0<|v|<\varepsilon$.
The point
\[
y=p+\varepsilon\frac{v}{|v|}
\]
then belongs to $H_\varepsilon$, and
\(
|x-y|=\varepsilon-|v|<\varepsilon.
\)
Hence $x$ does not satisfy the condition on the
right-hand side of \eqref{fin:eq:recover},
which proves the equality.

We now regard $\varepsilon$ as an additional real
parameter in \eqref{fin:eq:recover}. Since
$\Omega_\varepsilon$ and $H_\varepsilon$ have
uniformly bounded semialgebraic description
complexity, quantifier elimination gives a
semialgebraic description of $N$ whose complexity
is bounded solely in terms of $n$.
In particular, the bound is independent of the
individual submanifold and of the chosen
tubular radius. No uniform positive lower bound
on this radius is needed.

Since $N$ is smooth and semialgebraic, it is a
Nash submanifold. The finiteness theorem of
Coste--Shiota, together with the isotopy extension
argument recalled above, therefore yields only
finitely many smooth ambient isotopy types.
The zero-dimensional case is included, since
every compact connected zero-dimensional
submanifold consists of a single point.

\end{proof}

Combining the preceding finiteness result for fixed
ambient dimension with Kuiper's bound on the
ambient dimension of substantial taut embeddings,
we obtain a corresponding finiteness statement
for fixed intrinsic dimension.

\begin{corollary}[Fixed intrinsic dimension]\label{fin:cor:intrinsic}
For each $k\ge1$, only finitely many diffeomorphism types of
compact connected $k$-manifolds admit a taut embedding in some
Euclidean space. Their total $\Z_2$-Betti numbers are bounded
by a constant $B_{\mathrm{taut}}(k)$ depending only on $k$.
\end{corollary}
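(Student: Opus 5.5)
The plan is to reduce Corollary~\ref{fin:cor:intrinsic} to Corollary~\ref{fin:cor:taut} by bounding the ambient dimension. The tool for this is Kuiper's theorem on substantial taut embeddings. A taut embedding of a compact connected $k$-manifold $M$ is \emph{substantial} if its image lies in no affine hyperplane. Kuiper's bound says that a substantial taut (in fact tight, over $\Z_2$) embedding $M^k\to\R^n$ satisfies $n\le \tfrac12k(k+3)$, with equality only for Veronese-type surfaces and their higher analogues.

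First step: given a taut embedding $f:M\to\R^N$, replace $\R^N$ by the affine hull of $f(M)$. Tautness is preserved under this restriction. Indeed, squared distance functions from centers outside the affine span $W$ differ from squared distance functions from the orthogonal projections of these centers to $W$ only by additive constants. Hence the Morse ones are exactly the Morse squared distance functions of $f(M)\subset W$, and perfectness is unchanged. We then translate $W$ isometrically to some $\R^n$ with $n\le n_k:=\tfrac12k(k+3)$. If $n<n_k$, we compose with the standard linear inclusion $\R^n\subset\R^{n_k}$, which again preserves tautness by the same projection argument. Thus every compact connected $k$-manifold admitting a taut embedding somewhere admits one in the fixed space $\R^{n_k}$. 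For $k\ge1$ we have $n_k\ge2$, so Corollary~\ref{fin:cor:taut} applies.

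Second step: Corollary~\ref{fin:cor:taut} gives finitely many smooth ambient isotopy types of compact connected taut submanifolds of $\R^{n_k}$. Ambient isotopic submanifolds are diffeomorphic, so only finitely many diffeomorphism types occur. Restricting to those of dimension $k$ gives the finiteness claim. The Betti bound follows as in Corollary~\ref{fin:cor:dupin}: set $B_{\mathrm{taut}}(k)$ to be the maximum of $\beta$ over a finite list of representatives. Alternatively, it can be read off from the second paragraph of Theorem~\ref{fin:thm:tautdeformations} with $n=n_k$.

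The main obstacle is justifying Kuiper's bound in the present $\Z_2$-taut setting. I would cite Kuiper's theorem for tight (equivalently, two-piece-property-strengthened) embeddings. For this, I would note that $\Z_2$-tautness implies tightness of height functions by letting the center tend to infinity, which is the standard argument. The reduction-to-affine-hull step is routine but should be stated explicitly so that tautness of the restricted embedding is not in doubt.
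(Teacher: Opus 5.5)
Your proposal is correct and follows the paper's proof. Both pass to the affine span, where tautness is unchanged because squared distances split, apply Kuiper's bound $n\le k(k+3)/2$ to the resulting substantial embedding with the two-piece property, and then invoke Corollary~\ref{fin:cor:taut}. The only cosmetic difference is that you push everything into the single space $\R^{k(k+3)/2}$ by a linear inclusion, whereas the paper takes the union of the finite lists over the finitely many possible dimensions $n\le k(k+3)/2$.
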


\begin{proof}
Replace the ambient space of a taut embedding
$f:N^k\to\R^N$ by its affine span $E\cong\R^n$.
Tautness is unchanged, since for $q=q_E+q_\perp$
the squared distance splits as
\[
 \tfrac12|f(x)-q|^2
 =\tfrac12|f(x)-q_E|^2+\tfrac12|q_\perp|^2.
\]
The resulting embedding is substantial and has the
two-piece property. Kuiper's bound
\cite{Kuiper}, recalled in \cite[Theorem~1.1]{Cecil},
gives $n\le k(k+3)/2$. Only finitely many ambient
dimensions remain, and
Corollary~\ref{fin:cor:taut} gives a finite list of
diffeomorphism types for each. Taking their union
and the maximal total Betti number proves the claim.
\end{proof}

\subsection{Retaining tautness in a family}
\label{fin:sec:deformations}

The preceding finiteness results do not yet ensure
that tautness is preserved along the isotopies.
To obtain this stronger conclusion, we must
restrict the parameter families to those fibers
that are taut. The key observation is that
tautness itself defines a semialgebraic condition
on the parameters, as established in the following
lemma. This allows us to apply Nash triviality
within the taut parameter sets and construct
isotopies whose intermediate submanifolds
remain taut.

\begin{lemma}[Tautness in a semialgebraic family]
\label{fin:lem:tautfamily}
Let $B$ be a semialgebraic set and let
$\mathcal X\subset\R^n\times B$ be semialgebraic. Suppose
that each fiber
\[
 X_b=\{x\in\R^n\ | \ (x,b)\in\mathcal X\}
\]
is a nonempty compact connected Nash submanifold without boundary.
Then the parameter set
\[
 T=\{b\in B\ | \ X_b\text{ is taut}\}
\]
is semialgebraic. Moreover, finitely many of the fibers with
parameter in $T$ suffice as representatives for Nash isotopy
through taut fibers of this family.
\end{lemma}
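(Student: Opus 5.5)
The plan is to express tautness of $X_b$ by a first-order formula in $b$, and then apply Tarski--Seidenberg. Working directly with perfectness of Morse distance functions is awkward, because Betti numbers are not obviously first-order. So I would use the equivalent \emph{two-piece-type} characterization of $\Z_2$-tautness via sublevel sets. A compact connected embedded submanifold $X$ is $\Z_2$-taut if and only if, for every $q\in\R^n$ and every $r\ge0$, the inclusion of the closed ball sublevel set $X\cap \bar B(q,r)$ into $X$ induces an injection on $\Z_2$-homology. Equivalently, by Corollary~\ref{cor:characterization}, every $L_q$ is Morse--Bott. This condition is local in jets, so I would use the Morse--Bott formulation.

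For the semialgebraicity of $T$, the first step is to make the fibers uniformly Nash in the parameters. After a finite semialgebraic partition of $B$ (Hardt triviality together with Coste--Shiota's Nash triviality \cite{CS}), the family $\mathcal X$ is Nash trivial over each piece $B_i$. Thus locally the fiber $X_b$ is described by Nash functions whose jets depend semialgebraically on $(x,b)$. Over each piece, the shape operators $A_\xi$ of $X_b$ at $(x,\xi)$ are semialgebraic functions of $(x,\xi,b)$. The condition \eqref{eq:Dnu} on the regular locus $\G_\nu$ can then be written as a first-order formula: for all $(x,\xi)$ and every multiplicity pattern, local constancy of the pattern implies the vanishing of the horizontal derivative of each simple eigenvalue. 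Here eigenvalues and their derivatives are expressed via \eqref{eq:eigen-derivative}, and horizontal lifts come from the normal connection, which is rational in the first jets. By Corollary~\ref{cor:characterization} and Theorem~\ref{thm:general}(2), this formula is equivalent to tautness for compact connected embedded fibers. Hence $T\cap B_i$ is semialgebraic, and so is $T$.

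For the finiteness of representatives, I would apply the Nash triviality theorem of Coste--Shiota to the restricted family $\mathcal X|_T$. This gives a finite partition $T=\bigcup_j T_j$ into Nash manifolds (after refining, semialgebraically connected ones) over which $\mathcal X|_{T_j}\cong X_{b_j}\times T_j$ is Nash trivial. Given $b\in T_j$, I would join $b$ to $b_j$ by a Nash path in $T_j$, which exists since connected Nash manifolds are Nash path connected. The trivialization then yields a jointly Nash family of embeddings $\iota_t$ whose images are fibers over the path, hence lie in $T$ and are taut. Picking $b_j$ for each $j$ gives the finite list.

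The main obstacle is the uniform expression of the Dupin condition across the family. Fiberwise Nash charts do not automatically give jets that are jointly semialgebraic in $(x,b)$. I would handle this with Nash triviality first, so that the embedded fibers are images of a fixed Nash manifold under a jointly Nash map. A fallback is to quantify over local graph representations defined directly from $\mathcal X$ via the semialgebraic tangent spaces. A second, milder subtlety is that paths produced by Nash triviality may only be semialgebraic $C^0$ paths; this is why I would insist on a Nash (not merely semialgebraic) partition so that Nash paths exist.
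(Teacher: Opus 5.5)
Your proposal is correct, but for the semialgebraicity of $T$ it takes a genuinely different route from the paper.

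You use the paper's main equivalence (Theorem~\ref{thm:general}(2) and Corollary~\ref{cor:characterization}, which uses Ozawa's theorem for the direction from tautness) to replace tautness by a local jet condition. That condition is either \eqref{eq:Dnu} on $\G_\nu$ or the Morse--Bott property of every $L_q$, and you write it as a first-order formula in $(x,\xi,b)$ through the jointly Nash trivialization.

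The paper instead stays with the definition of tautness. After Coste--Shiota triviality over a piece $B_i$, every fiber is the image of one compact Nash manifold $F_i$, so $\beta_i=\beta(F_i)$ is a fixed integer. The set $\mathcal M_i$ of pairs $(b,q)$ with $L_{b,q}$ Morse is semialgebraic. By the weak Morse inequalities, a Morse $L_{b,q}$ is perfect exactly when $\#\Crit L_{b,q}=\beta_i$, and this is first-order. So the obstacle you name, that Betti numbers are not obviously first-order, is removed by the very triviality step you already perform.

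What each route buys:
\begin{itemize}
\item The paper's version is elementary and independent of the Dupin theory. It applies to any semialgebraic family of compact Nash submanifolds.
\item Your version makes this lemma depend on Sections~\ref{sec:degree}--\ref{sec:tautness}. That is harmless, since they come first.
\item Your version needs more bookkeeping: a first-order description of $\G_\nu$, of eigenvalue derivatives via \eqref{eq:eigen-derivative}, and of horizontal lifts, which involve second jets of the embedding.
\item In return, your version exhibits $T$ as cut out by a pointwise differential condition.
\end{itemize}
Your first paragraph announces the Morse--Bott formulation but then uses \eqref{eq:Dnu}; either works.

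The isotopy part matches the paper, except that you invoke Nash path connectedness of connected Nash manifolds. This is true: take a polynomial approximation of a path with corrected endpoints and compose it with a Nash tubular retraction. The paper avoids the issue by refining $T\cap B_i$ into Nash cells, each Nash diffeomorphic to an open box, where straight segments pull back to Nash paths.
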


\begin{proof}
By the Nash triviality theorem of Coste--Shiota
\cite[Theorem~A]{CS}, the parameter space $B$ admits
a finite partition into Nash manifolds $B_i$ such
that the family is Nash trivial over each $B_i$.
Thus, for every $i$, there exists a compact Nash
manifold $F_i$ and a Nash trivialization
\[
(x,b)\longmapsto (h_i(x,b),b)
\ \ \text{ for }\ \  (x,b)\in F_i\times B_i,
\]
where $h_{i,b}=h_i(\cdot,b)$ is a Nash embedding
of $F_i$ onto $X_b$.

The advantage of this representation is that all
fibers over $B_i$ are parametrized by the same
manifold $F_i$. We can therefore study their
squared distance functions on a fixed domain.
Put $\beta_i=\beta(F_i)$ and consider $L_{b,q}: F_i \to \R$, defined by
\[
L_{b,q}(x)=\frac12|h_i(x,b)-q|^2.
\]

Since $h_i$ is Nash, the first and second
derivatives of $L_{b,q}$ in a finite Nash atlas
of $F_i$ depend semialgebraically on $(x,b,q)$.
Consequently, both the set of critical points
and the subset of degenerate critical points
are semialgebraic.

By projecting the latter set onto the parameter
space $B_i\times\R^n$, the Tarski--Seidenberg
theorem shows that the set of pairs $(b,q)$
for which $L_{b,q}$ has a degenerate critical
point is semialgebraic. Its complement
$\mathcal M_i$ therefore consists precisely
of those pairs for which $L_{b,q}$ is Morse
and is also semialgebraic.

We now characterize tautness in terms of these
parameters. For a Morse function on the compact
manifold $F_i$, the weak Morse inequalities give
\(
c_j(L_{b,q})\ge b_j(F_i)
\)
in every degree, where $c_j(L_{b,q})$ denotes
the number of critical points of index $j$.
In particular,
\(
\#\Crit L_{b,q}\ge\beta_i.
\)
Equality holds precisely when the Morse function
is perfect, since equality of the total numbers
forces equality in every degree.

Moreover, the condition
\(
\#\Crit L_{b,q}=\beta_i
\)
can be expressed by a first-order formula.
Indeed, it requires the existence of exactly
$\beta_i$ distinct critical points in $F_i$,
with every critical point equal to one of them.
All critical points are counted on the fixed
manifold $F_i$, independently of the choice
of local charts.

It follows that the parameters corresponding
to taut fibers are exactly those satisfying the condition
\begin{equation}\label{fin:eq:tautformula}
\forall q\in\R^n:\
\bigl((b,q)\in\mathcal M_i
\Longrightarrow \#\Crit L_{b,q}=\beta_i\bigr).
\end{equation}
Since all conditions in this formula are
semialgebraic, quantifier elimination shows
that $T\cap B_i$ is semialgebraic.
As the partition of $B$ is finite, the set
$T$ of parameters corresponding to taut fibers
is semialgebraic as well.

We now use this result to obtain the claimed
finiteness up to Nash isotopy through taut
submanifolds. Following the parameter-path
construction of Coste--Shiota
\cite[Section~5]{CS}, we refine each
semialgebraic set $T\cap B_i$ into finitely
many Nash cells $C_{i,\alpha}$.

Each cell $C_{i,\alpha}$ admits a Nash diffeomorphism
onto an open box. Given any parameter $b\in C_{i,\alpha}$,
we can join it to a fixed reference parameter
$b_{i,\alpha}\in C_{i,\alpha}$ by taking the inverse
image of a straight line segment in the corresponding
box. This gives a Nash path
\(
\gamma:[0,1]\longrightarrow C_{i,\alpha}
\)
joining $b$ to $b_{i,\alpha}$.

Since the family is Nash trivial over $B_i$,
the maps
\(
h_{i,\gamma(t)}:F_i\longrightarrow\R^n
\)
form a Nash isotopy of embeddings.
Every intermediate image is taut because
$\gamma(t)\in T$ for all $t\in[0,1]$.
Composing with the inverse of the initial
parametrization makes the isotopy start
from the inclusion of $X_b$.

Finally, choosing one reference fiber for
each of the finitely many Nash cells gives
a finite list of representatives, such that
every taut fiber is Nash isotopic to one
of these representatives through taut
embedded submanifolds.
\end{proof}

We now have all the ingredients needed to prove
Theorem \ref{fin:thm:tautdeformations}. The uniform description complexity
established above allows us to organize the
submanifolds into finitely many semialgebraic
families. Lemma \ref{fin:lem:tautfamily} then ensures that these
families admit finite representative lists
up to Nash isotopy through taut submanifolds.

\begin{proof}[Proof of Theorem~\ref{fin:thm:tautdeformations}]
By Corollary~\ref{fin:cor:taut}, all submanifolds
under consideration admit semialgebraic descriptions
of uniformly bounded complexity. Consequently,
they can be represented by finitely many polynomial
description formats. Treating the polynomial
coefficients as parameters, each such format
defines a semialgebraic family.

We restrict these families to parameters for which
the corresponding fibers are nonempty, compact,
connected Nash submanifolds. This restriction is
again semialgebraic. Indeed, closedness and
boundedness can be expressed by first-order
conditions, while connectedness can be controlled
by Hardt's semialgebraic triviality theorem.
For a fixed description format, the condition
that a fiber is a Nash submanifold is also
semialgebraic. This follows from the finite
differentiability bounds for semialgebraic
graphing functions discussed in
\cite[Section~2, Proposition~2.1 and the
discussion of Theorem~2.4]{CS}.
If necessary, we further partition the parameter
spaces according to the dimensions of the fibers.

We can now apply Lemma~\ref{fin:lem:tautfamily}
to each of these finitely many families.
It provides a finite list of representative
fibers such that every taut submanifold under
consideration is Nash isotopic to one of them
through taut embedded submanifolds.
Combining the representative lists gives the
claimed finiteness up to Nash isotopy.
The smooth ambient extensions follow from
the isotopy extension argument recalled before
Theorem~\ref{fin:thm:carrier}. Consequently,
only finitely many smooth ambient isotopy types
and abstract diffeomorphism types occur.

Finally, the total $\Z_2$-Betti number is constant
under diffeomorphism. Taking its maximum over
the finite representative list therefore gives
a uniform bound depending only on $n$.
\end{proof}

We now apply Theorem~\ref{fin:thm:tautdeformations}
to compact embedded Dupin
hypersurfaces. Since these hypersurfaces are taut
by Proposition \ref{thm:B}, and tautness
implies the Dupin condition, the resulting Nash
isotopies remain within the class of Dupin
hypersurfaces. This yields the following
specialization of our finiteness theorem.

\begin{corollary}\label{thm:C}
For each $m\ge1$, there are finitely many compact connected
embedded Nash Dupin hypersurfaces
$H_1,\ldots,H_{N(m)}\subset\R^{m+1}$ such that every compact
connected smooth embedded hypersurface $M^m\subset\R^{m+1}$
satisfying \eqref{eq:D} is joined to one of the $H_i$ by a Nash
isotopy through embedded Dupin hypersurfaces. This isotopy admits
a smooth ambient extension,  possibly after a smooth
reparametrization of time.

Consequently only finitely many smooth ambient isotopy types and
diffeomorphism types occur. Moreover, there exists a
constant $B(m)$ depending only on $m$ such that
\[
\beta(M)\le B(m)
\]
for every hypersurface in this class.
\end{corollary}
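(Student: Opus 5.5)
The plan is to derive Corollary~\ref{thm:C} as the hypersurface case $n=m+1$ of Theorem~\ref{fin:thm:tautdeformations}. The one thing to arrange is that every member of the finite list, and every intermediate stage of the isotopy, is Dupin rather than merely taut.

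First, let $M^m\subset\R^{m+1}$ be a compact connected smooth embedded hypersurface satisfying \eqref{eq:D}. Proposition~\ref{thm:B} shows that $M$ is $\Z_2$-taut. In the case $m=1$, the Dupin condition is vacuous for curves in the sense relevant here; tautness still comes from Proposition~\ref{thm:B}, whose proof does not use $m\ge2$. Thus $M$ belongs to the class covered by Theorem~\ref{fin:thm:tautdeformations} with $n=m+1$.

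Applying that theorem gives a finite list of compact connected taut Nash submanifolds of $\R^{m+1}$ and a Nash isotopy from $M$ to one of them through taut embedded submanifolds. I then discard the list members that are not hypersurfaces, since an isotopy preserves dimension. I also discard those not joined to any Dupin hypersurface; this keeps the list finite. Call the survivors $H_1,\ldots,H_{N(m)}$.

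Next I check that each intermediate image $\iota_t(M)$ is Dupin. It is a compact connected embedded taut hypersurface, and tautness implies the Dupin condition by Pinkall~\cite{Pinkall} and Miyaoka~\cite{Miyaoka}, as already used in the proof of Corollary~\ref{fin:cor:taut}. That condition holds in particular on the regular locus, so \eqref{eq:D} is satisfied. The same argument applies to the endpoint $H_i$, so the $H_i$ are Nash Dupin hypersurfaces.

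The smooth ambient extension, after a smooth reparametrization of time, is supplied directly by Theorem~\ref{fin:thm:tautdeformations}. It follows that only finitely many smooth ambient isotopy types and diffeomorphism types occur. For the Betti bound, set $B(m)=\max_i\beta(H_i)$. Diffeomorphism invariance of $\beta$ then gives $\beta(M)\le B(m)$; alternatively, the bound can be read off from Corollary~\ref{fin:cor:dupin}.

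The main delicacy is the direction ``taut $\Rightarrow$ Dupin'' for intermediate stages, where the principal multiplicities may change. I rely on the cited theorems of Pinkall and Miyaoka, which give the Dupin condition for all taut hypersurfaces with no properness assumption. If one wants the stages to satisfy exactly \eqref{eq:D} rather than a stronger condition, Corollary~\ref{cor:characterization} gives this directly: tautness is equivalent to \eqref{eq:D} for compact connected embedded hypersurfaces.
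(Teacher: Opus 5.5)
Your proposal is correct and follows the paper's proof: Proposition~\ref{thm:B} gives tautness, you apply Theorem~\ref{fin:thm:tautdeformations} with $n=m+1$ and keep only the $m$-dimensional list members, you use the Pinkall--Miyaoka result that taut implies Dupin to see that every intermediate stage is Dupin, and you take the Betti bound as a maximum over the finite list. One small correction: for $m=1$ condition \eqref{eq:D} is not vacuous, since it says the curvature is locally constant and so forces round circles; because you invoke Proposition~\ref{thm:B} anyway, this slip does not affect the argument.
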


\begin{proof}[Proof of Corollary~\ref{thm:C}]
By Proposition~\ref{thm:B}, every compact connected
embedded hypersurface satisfying \eqref{eq:D} is taut.
We can therefore apply
Theorem~\ref{fin:thm:tautdeformations} in
$\R^{m+1}$ and restrict its finite representative
list to submanifolds of dimension $m$, since
dimension is preserved under Nash isotopy.

The resulting isotopies consist entirely of
compact embedded taut hypersurfaces.
Since taut hypersurfaces satisfy the Dupin
condition \cite{Miyaoka,Pinkall}, every
intermediate hypersurface is Dupin as well.
Thus the isotopies preserve the required
geometric condition.

The smooth ambient extensions follow from
Theorem~\ref{fin:thm:tautdeformations}.
The finiteness of ambient isotopy and
diffeomorphism types follows from the finite
representative list, which also gives a
uniform bound on $\beta(M)$ depending only
on $m$.
\end{proof}

Although the isotopies preserve tautness and hence
the Dupin condition, the number of distinct principal
curvatures and their multiplicities need not remain
constant along the deformation.

\section{Topological consequences}\label{fin:sec:topology}

Theorem~\ref{thm:general}, together with classical classification
and rigidity results for taut embeddings, yields several
topological consequences of the regular Dupin condition.
None of the following applications assumes tautness or constant
principal multiplicities in advance. They are independent of
the finiteness results of Section~\ref{fin:sec:carriers}.

\begin{corollary}[Homology spheres]\label{fin:cor:homologyspheres}
Let $M^m\subset\R^n$, $1\le m<n$, be a compact connected smooth
embedded submanifold satisfying \eqref{eq:Dnu}.
If $M$ has the $\Z_2$-homology of $S^m$, then $M$ is a round
sphere in an affine $(m+1)$-dimensional subspace of $\R^n$.
\end{corollary}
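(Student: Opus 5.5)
We deduce the statement from Theorem~\ref{thm:general}(2) together with the classical rigidity theorem for taut embeddings of homology spheres. Since $M$ is compact, connected, embedded and satisfies \eqref{eq:Dnu}, Theorem~\ref{thm:general} shows that $M$ is $\Z_2$-taut. We then pass to the affine span $E\cong\R^{n'}$ of $M$, exactly as in the proof of Corollary~\ref{fin:cor:intrinsic}. The splitting
\[
\tfrac12|f-q|^2=\tfrac12|f-q_E|^2+\tfrac12|q_\perp|^2
\]
shows that tautness is preserved, so $M\subset E$ is a substantial taut embedding. The goal is to show $n'=m+1$ and that $M$ is a round sphere in $E$.

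The key topological input is the following. Since $\beta(M)=2$, every Morse squared distance function on $M$ has exactly two critical points, a minimum and a maximum. Hence every Morse distance function, and by density of Morse centers also the height functions, have only two critical points. This is the definition of a tight (two-piece) embedding with the spherical two-piece property, equivalently a taut embedding with total Betti number two. The classical result is due to Nomizu--Rodriguez and Banchoff for the spherical case, and is recalled in \cite{Cecil} in the form ``a taut embedding of a $\Z_2$-homology sphere is a metric sphere''. It states that such an embedding is a round sphere $S^m$ lying in an $(m+1)$-dimensional affine subspace. We invoke it directly.

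If a self-contained argument is preferred over the citation, we would argue through focal points. For any unit normal $\xi$ at $p$ and nonzero principal curvature $\lambda$ of $A_\xi$, the point $q=f(p)+\xi/\lambda$ is a focal point. By Theorem~\ref{thm:general}(1), $L_q$ is Morse--Bott, and its critical manifold through $p$ has dimension equal to the multiplicity. Perfection over $\Z_2$ then allows only critical manifolds with total Betti number contributing to $\beta(M)=2$. Hence a nontrivial critical submanifold must be all of $M$, or a sphere whose contribution forces it to be $M$. This implies every point is umbilic in every normal direction with nonzero curvature. It then yields that $A_\xi=\lambda(\xi)I$ for all $\xi$, and a standard argument shows that $M$ lies in a round sphere of an $(m+1)$-plane.

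The main obstacle in this alternative route is handling the normal directions with $\lambda=0$, and showing the codimension reduction to an affine $(m+1)$-space. For this we would rely on substantiality together with the Kuiper-type count: a taut sphere with $\beta=2$ satisfies $n'\le m+1$. We therefore prefer to cite the classical theorem once tautness has been secured from Theorem~\ref{thm:general}.
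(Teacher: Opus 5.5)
Your proposal is correct and follows the paper's proof. Theorem~\ref{thm:general}(2) gives $\Z_2$-tautness, so every Morse squared distance function has exactly one critical point of index $0$ and one of index $m$, and the classical Nomizu--Rodr\'{i}guez rigidity theorem then gives a round sphere in an $(m+1)$-plane; the passage to the affine span and the focal-point sketch are not needed. One point should be made explicit: Nomizu--Rodr\'{i}guez requires $m\ge2$, so the case $m=1$ must come from the separate characterization of taut embedded circles (Banchoff), as the paper does.
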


\begin{proof}
By Theorem~\ref{thm:general}, $M$ is taut. Every Morse squared
distance function therefore has exactly one critical point of
index $0$ and one of index $m$, and no others. For $m\ge2$,
the theorem of Nomizu--Rodr\'{i}guez \cite{NR} implies that $M$
is a round sphere or an affine $m$-plane. Compactness excludes
the latter. The case $m=1$ follows from the characterization
of taut embedded circles \cite[Theorem~8.3]{Cecil1997}.
\end{proof}

\begin{corollary}[Products of spheres for hypersurfaces]
\label{fin:cor:homology}
Let $M^m\subset\R^{m+1}$ be a compact connected smooth
embedded hypersurface satisfying \eqref{eq:D}.
If $M$ has the $\Z_2$-homology of $S^p\times S^{m-p}$,
where $1\le p<m$, then $M$ is a Dupin cyclide,
diffeomorphic to that product.
\end{corollary}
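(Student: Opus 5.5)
The plan is to reduce the statement to tautness and then invoke the classical classification of taut hypersurfaces with the homology of a product of two spheres. Since $M$ is compact, connected and embedded, and satisfies \eqref{eq:D}, Proposition~\ref{thm:B} makes $M$ a $\Z_2$-taut hypersurface. By Theorem~\ref{thm:A} it is moreover real-analytic, indeed Nash, and by Proposition~\ref{prop:distances} all squared distance and height functions are Morse--Bott. The analyticity is not needed for the classification, but it helps in controlling the principal curvatures globally.

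The second step uses the homology hypothesis. A Morse distance function on $M$ is perfect, so it has exactly one critical point of index $0$, one of index $p$, one of index $m-p$ (two of index $p$ if $p=m-p$), and one of index $m$. Since the index of $L_q$ at a critical point is the sum of the multiplicities of the principal curvatures $\lambda$ with $t\lambda>1$, this constraint on indices should force the number $g$ of distinct principal curvatures to be $2$ at generic points. The multiplicities should then be $p$ and $m-p$, and I would argue this via the focal structure. Concretely, I would invoke the theorem of Cecil--Ryan, recalled in \cite{Cecil1997}: a compact taut hypersurface of $\R^{m+1}$ with the $\Z_2$-homology of $S^p\times S^{m-p}$ is a cyclide of Dupin, that is, it is Lie equivalent to a standard product of spheres and has exactly two distinct principal curvatures everywhere, of multiplicities $p$ and $m-p$. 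Its curvature surfaces are then round spheres, and the hypersurface is the image under a Möbius transformation of a standard tube over a round $S^{m-p}$ or $S^p$. Diffeomorphism with $S^p\times S^{m-p}$ follows from this description.

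The main obstacle is matching the hypotheses of the cited classification. Older versions assume properness, meaning constant $g=2$, or assume tautness together with smoothness. Tautness is now available, so the classical taut result applies directly. If the available reference instead requires exactly two principal curvatures everywhere, I would derive this as follows. Since $\beta(M)=4$, a theorem of Thorbergsson and Grove--Halperin type, relating the number of critical points to $2g$ on the proper part, forces $g=2$ on $\G$. The Morse--Bott critical manifolds of dimension $p$ and $m-p$ then show that no eigenvalue collisions occur: a collision would produce a critical set of dimension $m$, which is impossible unless $M$ is a sphere. Hence $\G=M$, and the proper Dupin classification of Cecil--Ryan and Pinkall finishes the proof.
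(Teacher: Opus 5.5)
Your main line is exactly the paper's proof: Proposition~\ref{thm:B} gives $\Z_2$-tautness, and the Cecil--Ryan characterization of taut hypersurfaces with the $\Z_2$-homology of $S^p\times S^{m-p}$ needs only tautness and this homology, so it applies directly. Your fallback is therefore unnecessary, and its key step would fail as written: the relation $\beta(M)=2g$ is Thorbergsson's theorem for compact \emph{proper} Dupin hypersurfaces, and it has no analogue on the noncompact regular pieces of $\G$ (the iterated tori of Proposition~\ref{fin:prop:tori} for $m=3$ have $\beta=8$ but generically three principal curvatures).
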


\begin{proof}
By Proposition~\ref{thm:B}, $M$ is taut. The conclusion is the
homological characterization of taut cyclides due to
Cecil--Ryan \cite{CR}, recalled in
\cite[Theorem~8.6]{Cecil1997}. Their theorem requires only the
stated homology, not an a priori diffeomorphism to the product.
\end{proof}

\begin{remark}\label{fin:rem:productcodimension}
The hypersurface assumption in Corollary~\ref{fin:cor:homology}
is essential. The standard projective embedding
$\mathbb RP^3\to\operatorname{Sym}_0(4,\R)\cong\R^9$,
given by $[x]\mapsto xx^{\mathsf T}-\tfrac14 I$ for $|x|=1$,
is taut for the Frobenius inner product. 
Indeed, $L_B([x])=c_B-x^{\mathsf T}Bx$,
where $c_B$ is a constant depending only on $B$.
For $B$ with distinct eigenvalues this has one critical point
of each index $0,1,2,3$, corresponding to the four eigenlines.
Repeated eigenvalues give degenerate critical points, so every
Morse squared distance function is $\Z_2$-perfect. By
\cite{Pinkall}, this embedding satisfies \eqref{eq:Dnu}.
Yet $\mathbb RP^3$ has the same $\Z_2$-homology groups as
$S^1\times S^2$ and is not diffeomorphic to that product.
Their fundamental groups are $\Z_2$ and $\Z$, respectively.
\end{remark}

In dimension three we use the classification of
Pinkall--Thorbergsson \cite{PT3},
also recalled in \cite[Theorem~8.8]{Cecil1997}.

\begin{corollary}[Dimension three]\label{fin:cor:dimensionthree}
A compact connected smooth embedded submanifold
$M^3\subset\R^n$ satisfying \eqref{eq:Dnu} is
diffeomorphic to one of
\[
 S^3,\quad \mathbb RP^3,\quad S^3/Q_8,\quad T^3,\quad
 S^1\times S^2,\quad S^1\times\mathbb RP^2,\quad
 S^2\widetilde{\times}S^1.
\]
Here $Q_8=\{\pm1,\pm i,\pm j,\pm k\}$ acts on the
unit quaternions by multiplication, and
$S^2\widetilde{\times}S^1$ denotes the nonorientable
$S^2$-bundle over $S^1$. All seven types occur in Euclidean
spaces of suitable dimensions.

If $M$ is a hypersurface in $\R^4$, the list reduces to
\[
 S^3,\qquad S^1\times S^2,\qquad S^3/Q_8,
 \qquad T^3,
\]
and all four types occur.
\end{corollary}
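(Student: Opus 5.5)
The plan is to reduce the statement to the Pinkall--Thorbergsson classification of taut compact $3$-manifolds \cite{PT3}, recalled in \cite[Theorem~8.8]{Cecil1997}. By Theorem~\ref{thm:general}(2), a compact connected embedded $M^3\subset\R^n$ satisfying \eqref{eq:Dnu} is $\Z_2$-taut. No tautness or constant principal multiplicities need to be assumed in advance, since the regular Dupin condition already supplies tautness. The classification then says that a taut compact $3$-manifold is diffeomorphic to one of the seven listed spaces. This gives the first list directly, once I have checked that the classification is stated for arbitrary codimension and for $\Z_2$-tautness.

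For existence of all seven types in suitable $\R^n$, I would exhibit taut embeddings and use Corollary~\ref{cor:characterization}: a taut embedding satisfies \eqref{eq:Dnu}, so it lies in the class.
\begin{itemize}
\item $S^3$ is a round sphere.
\item $\mathbb RP^3$ comes from the Veronese-type embedding of Remark~\ref{fin:rem:productcodimension}.
\item $T^3$ and $S^1\times S^2$ come from standard products of round spheres, which are taut (Ozawa/Pinkall product and tube constructions).
\item $S^1\times\mathbb RP^2$ comes from the product of a circle with the taut Veronese surface.
\item $S^3/Q_8$ comes from a principal orbit of the isoparametric family with three principal curvatures in $S^4$ (Cartan's), placed in $\R^5$ via stereographic projection. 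It is also realized as a hypersurface in $\R^4$; see below.
\item $S^2\widetilde\times S^1$ comes from a tube-type or mapping-torus construction over the Veronese surface, as in Pinkall--Thorbergsson.
\end{itemize}
I would cite these examples from \cite{PT3} rather than reverify each, since tautness of those standard models is classical.

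For the hypersurface refinement in $\R^4$, Proposition~\ref{thm:B} gives tautness, and I would invoke the hypersurface part of the same classification. One way to see this is orientability: compact embedded hypersurfaces of $\R^4$ are orientable, which excludes $S^1\times\mathbb RP^2$ and $S^2\widetilde\times S^1$.

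Excluding $\mathbb RP^3$ is the step I expect to be the main obstacle. I would first try an argument via the Alexander duality/linking structure of a separating hypersurface in $\R^4$: $M$ bounds a compact region $W$, so half of $H^*(M;\Z_2)$ comes from $W$. A taut hypersurface gives a perfect Morse function on $W$ as well (the two-piece property passes to the sides). This constrains the $\Z_2$-cohomology ring, since the cup-square of the generator in $H^1(\mathbb RP^3)$ is nonzero. If that argument becomes delicate, I would simply cite the hypersurface statement of \cite{PT3} and \cite[Theorem~8.8]{Cecil1997}.

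For realization of the four hypersurface types:
\begin{itemize}
\item $S^3$ is a round sphere.
\item $S^1\times S^2$ is a Dupin cyclide, i.e.\ a tube around a circle.
\item $T^3$ is a tube around a standard Clifford-type taut torus $T^2\subset\R^3$, pushed into $\R^4$ as a tube around a torus in a hyperplane.
\item $S^3/Q_8$ is the stereographic image of Cartan's isoparametric hypersurface in $S^4$ with three principal curvatures, which lies in $\R^4$ and is taut.
\end{itemize}
All of these are taut, hence satisfy \eqref{eq:D} by Corollary~\ref{cor:characterization}, completing the plan.
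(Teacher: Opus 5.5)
\paragraph{Comparison with the paper.} Your reduction for the seven-type list (Theorem~\ref{thm:general}(2) followed by the Pinkall--Thorbergsson classification) is the paper's. So is your orientability exclusion of $S^1\times\mathbb RP^2$ and $S^2\widetilde\times S^1$. The real difference is how you exclude $\mathbb RP^3$ among hypersurfaces in $\R^4$.

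The paper does this in one line with Corollary~\ref{fin:cor:homology}. Since $\mathbb RP^3$ has the $\Z_2$-homology of $S^1\times S^2$, the Cecil--Ryan characterization forces a Dupin hypersurface with that homology to be a cyclide diffeomorphic to $S^1\times S^2$. That is impossible for $\mathbb RP^3$.

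Your Alexander-duality route is purely topological. It proves more: $\mathbb RP^3$ admits no smooth embedding in $\R^4$ at all (Hantzsche's theorem), with no Dupin or tautness input. In exchange it avoids the deep Cecil--Ryan theorem. As written, though, it is only a sketch, and the claim that the two-piece property gives perfect Morse functions on the sides is unnecessary.

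\paragraph{Making your route precise.} Write $S^4=W_1\cup_M W_2$.
\begin{enumerate}
\item Mayer--Vietoris with $\Z_2$ coefficients gives $H^1(W_1)\oplus H^1(W_2)\cong H^1(M)$.
\item Hence the generator $a\in H^1(\mathbb RP^3;\Z_2)$ equals $i^*b$ for one side $W$.
\item Then $a^3=i^*(b^3)$.
\item The restriction $H^3(W;\Z_2)\to H^3(M;\Z_2)$ is zero. Indeed, the coboundary $H^3(M)\to H^4(W,M)\cong H_0(W)\cong\Z_2$ is surjective because $H^4(W;\Z_2)=0$, so it is an isomorphism.
\item Therefore $a^3=0$, contradicting $a^3\neq0$ in $\Z_2[a]/(a^4)$.
\end{enumerate}
The ring fact that matters is $a^3\ne0$, not merely $a^2\neq0$. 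Your fallback of citing a hypersurface refinement in Pinkall--Thorbergsson or Cecil's survey is not something the paper relies on. The in-paper fallback is Corollary~\ref{fin:cor:homology}.

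\paragraph{Minor slips in the realization part.}
\begin{itemize}
\item A tube around the Veronese surface in $S^4$ is Cartan's hypersurface $S^3/Q_8$, and in $\R^5$ such a tube is four-dimensional. So that construction does not produce $S^2\widetilde\times S^1$. Simply cite the classification for existence of this type, as the paper does.
\item Cartan's hypersurface already lies in $S^4\subset\R^5$. Stereographic projection is what places it in $\R^4$.
\end{itemize}
Your remaining examples are fine. Using Corollary~\ref{cor:characterization} in place of Pinkall's theorem to get \eqref{eq:Dnu} for taut embeddings is also legitimate.
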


\begin{proof}
Theorem~\ref{thm:general} gives tautness, so the classification
in \cite{PT3} applies. Conversely,
each of the seven diffeomorphism types admits
taut embeddings, which
necessarily satisfy \eqref{eq:Dnu} by
Pinkall's theorem \cite{Pinkall}.
In the hypersurface case,
orientability excludes $S^1\times\mathbb RP^2$ and the
nonorientable $S^2$-bundle over $S^1$.
Corollary~\ref{fin:cor:homology} excludes $\mathbb RP^3$,
since it has the $\Z_2$-homology of $S^1\times S^2$ but is
not diffeomorphic to that product. The four remaining types
are realized by a round sphere, a cyclide, a stereographic
image of Cartan's hypersurface in $S^4$, and the tube
construction of Proposition~\ref{fin:prop:tori}, respectively.
\end{proof}

Further consequences in dimensions two and four follow from
the classical classifications once Theorem~\ref{thm:general}
has established tautness for a compact connected smooth
embedded submanifold $M^m\subset\R^n$ satisfying
\eqref{eq:Dnu}. In dimension two, this implies that
$M$ is diffeomorphic to $S^2$, $T^2$, or
$\mathbb RP^2$ \cite[Theorem~8.1]{Cecil1997}.

In dimension four, the possible diffeomorphism types
can be further restricted by the Betti numbers
$b_i=\dim_{\Z_2}H_i(M;\Z_2)$. If $b_1=0$, then $M$
is diffeomorphic to $S^4$, $S^2\times S^2$, or
$\mathbb CP^2$. If $b_2=0$, then $M$ is
diffeomorphic to $S^4$ or $S^1\times S^3$
\cite[Theorems~1 and~2]{Gor}.

\section{Examples and scope}\label{fin:sec:scope}

For compact connected embedded proper Dupin hypersurfaces,
the classical restrictions on the number $s$ of
distinct principal curvatures give
\[
s\in\{1,2,3,4,6\}
\ \ \text{ with }\ \ \beta(M)=2s\le12,
\]
and the principal multiplicities agree with those of
isoparametric hypersurfaces
\cite{Thorbergsson,GH,Stolz}.

The following standard construction shows why these
restrictions do not extend to all hypersurfaces
satisfying \eqref{eq:D}.

\begin{proposition}[Iterated tubes]\label{fin:prop:tori}
For every $m\ge1$ there is a compact connected embedded taut,
and hence Dupin, hypersurface $M_m\subset\R^{m+1}$ diffeomorphic
to $T^m$. It has $m$ distinct principal curvatures on an open
dense subset. For $m\ge3$ it also has points with just two
distinct principal curvatures, and is therefore not proper
Dupin. Consequently any bound as in Corollary~\ref{thm:C} must
satisfy
\[
 B(m)\ge2^m.
\]
\end{proposition}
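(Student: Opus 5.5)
We construct $M_m$ inductively by iterated tubes. Start with a round circle $M_1\subset\R^2$, which is taut with one principal curvature. Given $M_{m-1}\subset\R^m$, a compact connected embedded taut hypersurface diffeomorphic to $T^{m-1}$, embed $\R^m\subset\R^{m+1}$ as a coordinate hyperplane. Then $M_{m-1}$ becomes a codimension-two submanifold of $\R^{m+1}$, and it is still taut by the splitting of squared distance functions used in the proof of Corollary~\ref{fin:cor:intrinsic}. For $\varepsilon$ smaller than the tubular radius, let $M_m$ be the tube of radius $\varepsilon$. By Pinkall's tube theorem \cite{Pinkall}, as in the proof of Corollary~\ref{fin:cor:taut}, $M_m$ is a compact connected embedded taut hypersurface. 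The normal bundle of $M_{m-1}$ in $\R^{m+1}$ is trivial, spanned by the unit normal of $M_{m-1}$ in $\R^m$ and the last coordinate vector, so $M_m\cong M_{m-1}\times S^1\cong T^m$. Tautness gives the Dupin condition by \cite{Miyaoka,Pinkall}, and $\beta(M_m)=2^m$. Together with Corollary~\ref{thm:C}, this yields $B(m)\ge 2^m$.

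For the curvature count, we use \eqref{eq:general-tube-curvatures}. At a point $(p,\xi)$ with $\xi=\cos\theta\,\nu(p)+\sin\theta\,e_{m+1}$, we have $A_\xi=\cos\theta\,A_{M_{m-1}}(p)$, where $\nu$ is the unit normal of $M_{m-1}$ in $\R^m$. The tube therefore has principal curvatures
\[
\mu_i=\frac{\lambda_i\cos\theta}{1-\varepsilon\lambda_i\cos\theta},
\]
one for each principal curvature $\lambda_i$ of $M_{m-1}$, together with $-1/\varepsilon$ of multiplicity one. The map $\lambda\mapsto\mu_\lambda$ is injective and avoids $-1/\varepsilon$. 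Hence, wherever $\cos\theta\ne0$, the number of distinct principal curvatures equals the number for $M_{m-1}$ plus one. Wherever $\cos\theta=0$, all the $\mu_i$ vanish, leaving exactly the two values $0$ and $-1/\varepsilon$.

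By induction, assume $M_{m-1}$ has $m-1$ distinct principal curvatures on an open dense set $G_{m-1}$. Then $M_m$ has $m$ distinct principal curvatures on the open dense set $\{(p,\theta)\mid p\in G_{m-1},\ \cos\theta\ne0\}$. The two circles $\cos\theta=0$ over each point are where only two distinct values occur. For $m\ge3$ this is fewer than $m$, so the number of distinct principal curvatures is not constant and $M_m$ is not proper Dupin.

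The main point needing care is that the inductive hypothesis really provides an open dense set with $m-1$ distinct values. This requires the $\lambda_i$ of $M_{m-1}$ to be generically pairwise distinct, which the formula above guarantees because $\lambda\mapsto\mu_\lambda$ is injective. A second point is that $\varepsilon$ must be chosen below the tubular radius at each step so that each tube is embedded; this is possible since each $M_{m-1}$ is compact.
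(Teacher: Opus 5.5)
Your proposal is correct and follows the paper's proof essentially step by step. The shared steps are the iterated small tubes after including $\R^{m}$ as a hyperplane, the trivialization of the rank-two normal bundle by $\nu$ and the constant vector, the identity $A_\xi=\cos\theta\,A$ combined with \eqref{eq:general-tube-curvatures}, and induction on the open dense set where $\cos\theta\ne0$, with only $0$ and $-1/\varepsilon$ remaining at $\cos\theta=0$. One wording slip: the locus $\cos\theta=0$ consists of two points in each fiber circle, i.e.\ the two copies of $M_{m-1}$ at $\theta=\pm\pi/2$, not ``two circles over each point.''
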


\begin{proof}
Start with a round circle $M_1\subset\R^2$.
Given $M_j\subset\R^{j+1}$, include the ambient
space as a hyperplane in $\R^{j+2}$ and let
$M_{j+1}$ be a sufficiently small tube about $M_j$.
Adding the ambient direction preserves tautness,
and the tube is taut by \cite{Pinkall}.

The global unit normal field $\nu$ of $M_j$,
together with a constant unit vector $\zeta$
orthogonal to $\R^{j+1}$, trivializes its
rank-two normal bundle. Thus
$M_{j+1}\cong M_j\times S^1$, giving
$M_j\cong T^j$.

It remains to examine the principal curvatures
of the hypersurfaces obtained by this construction.
Let $A$ denote the shape operator of $M_j$
with respect to $\nu$. Since $\zeta$ is constant,
the shape operator corresponding to
\[
\xi=\cos\theta\,\nu+\sin\theta\,\zeta
\]
is given by $A_\xi=\cos\theta\,A$.

Applying the tube formula
\eqref{eq:general-tube-curvatures}, we obtain
the horizontal principal curvatures and the
additional curvature in the circle direction
\begin{equation}\label{fin:eq:tubecurvature}
\frac{\cos\theta\,\lambda}
     {1-\varepsilon\cos\theta\,\lambda}
\qquad\text{and}\qquad
-\frac1\varepsilon,
\end{equation}
where $\lambda$ runs through the principal
curvatures of $M_j$.
Choosing $\varepsilon$ sufficiently small at each
inductive step ensures that all denominators
remain positive. 
Whenever $\cos\theta\ne0$,
distinct principal curvatures of $M_j$ give
distinct horizontal principal curvatures of
$M_{j+1}$, none of which equals $-1/\varepsilon$.
By the induction hypothesis, $M_j$ has $j$
distinct principal curvatures on an open dense
subset. Together with the condition
$\cos\theta\ne0$, this shows that $M_{j+1}$
has $j+1$ distinct principal curvatures on
an open dense subset.
On the other hand, at $\cos\theta=0$ all
horizontal principal curvatures vanish, so
that only $0$ and $-1/\varepsilon$ remain.

Induction therefore proves the asserted
curvature properties. Finally, since
$M_m\cong T^m$, we have
\[
\beta(M_m)=\beta(T^m)=2^m,
\]
which gives the claimed lower bound.
\end{proof}

The examples have $\beta=8$ with generically three principal
curvatures for $m=3$, and five distinct principal curvatures
on an open dense set for $m=5$. Thus neither $\beta=2s$
nor the proper-Dupin list of curvature counts applies
without properness. Corollary~\ref{thm:C} also applies to
the compact proper Dupin examples of Pinkall--Thorbergsson
\cite{PTdef} and Miyaoka--Ozawa \cite{MO}, which are not
Lie-equivalent to isoparametric hypersurfaces
\cite[Section~6]{Cecil}.

For hypersurfaces already known to be taut, Chi's analyticity
theorem \cite{Chi} can replace the continuation argument.
The polynomial relation of uniformly bounded degree supplied
by Proposition~\ref{prop:degree} on a single proper Dupin
patch extends to the connected hypersurface by the analytic
identity theorem. This uses the local degree bound extracted
from \cite[Section~3]{CCJ}. The carrier and tube arguments
then give uniform semialgebraic description complexity,
to which the classical Nash finiteness results of
Coste--Shiota \cite{CS} apply. Preserving tautness along
the isotopies further uses the semialgebraicity of the
taut parameter sets established in
Section~\ref{fin:sec:deformations}. Related finiteness
results for isoparametric and equifocal hypersurfaces
appear in \cite{Wu,GQ}.

\begingroup
\raggedright

\endgroup
\end{document}